\documentclass[12pt, leqno, british, final]{amsart}
\usepackage[
	style=alphabetic,
	citestyle=alphabetic,
	backend=biber
]{biblatex}
\addbibresource{doc.bib}
 %comma separator (instead of semicolon) for multi-argument \cite{}
\usepackage{a4, amsmath}
\usepackage{mathtools}
\usepackage{amssymb}
\usepackage{amsthm, amscd, mathdots}
\swapnumbers
\usepackage{enumerate}
\usepackage{url}
\usepackage{hyperref}
\usepackage{cleveref}
\usepackage{csquotes}
\usepackage{color}
\usepackage{datetime}
\usepackage{todonotes}
\usepackage{tikz-cd}

\usepackage{microtype}
\usepackage[inline]{showlabels}

\usepackage[pass]{geometry}

\usepackage[all]{xy}

\theoremstyle{definition}
\newtheorem{defi}{Definition}[section]

\theoremstyle{plain}
\newtheorem{prop}[defi]{Proposition}
\newtheorem{lem}[defi]{Lemma}
\newtheorem{stel}[defi]{Theorem}
\newtheorem{gev}[defi]{Corollary}

\newtheorem*{stel*}{Theorem}
\theoremstyle{remark}
\newtheorem{opm}[defi]{Remark}

\newtheorem{vbn}[defi]{Examples}

\newcommand{\nat}{\mathbb{N}}
\newcommand{\zz}{\mathbb{Z}}
\newcommand{\rr}{\mathbb{R}}

\newcommand{\cc}{\mathbb{C}}
\newcommand{\qq}{\mathbb{Q}}
\newcommand{\Z}{\mathbb{Z}}
\newcommand{\mc}{\mathcal}
\newcommand{\mf}{\mathfrak}

\newcommand{\pow}[1]{^{(#1)}}

\newcommand{\Val}{\mathcal{V}}

\newcommand{\Lar}{{\mathcal{L}_{\rm ring}}}

\newcommand{\llangle}{\langle\!\langle}
\newcommand{\rrangle}{\rangle\!\rangle}

\DeclareMathOperator{\cd}{cd}

\DeclareMathOperator{\charac}{char}

\DeclareMathOperator{\Odd}{\mathsf{Odd}}
\DeclareMathOperator{\Neg}{\mathsf{Neg}}
\DeclareMathOperator{\Pic}{Pic}
\DeclareMathOperator{\Hom}{Hom}
\newcommand{\RF}[2]{\mathrm{r}_{#1}(#2)}

\newcommand{\ovl}{\overline}

\DeclareMathOperator{\Coker}{Coker}

\title{Universally defining subrings in function fields}
\author{Nicolas Daans and Philip Dittmann}
%\date{\today}
\address{Charles University, Faculty of Mathematics and Physics, Department of Algebra, Sokolov\-sk\' a 83, 18600 Praha~8, Czech Republic.}
\curraddr{Université de Mons, Département de Mathematique, Place du Parc 20, 7000 Mons, Belgium}
\email{nicolas.daans@kuleuven.be}
\address{Institut für Algebra, Technische Universität Dresden, 01062 Dresden, Germany}
\curraddr{Department of Mathematics, University of Manchester, Manchester M13 9PL, United Kingdom}
\email{philip.dittmann@manchester.ac.uk}

\begin{document}
\begin{abstract}
  We establish that all rings of $S$-integers are universally definable
  in function fields in one variable over certain ground fields including global and non-archimedean local fields.
  That is, we show that the complement of such a ring of $S$-integers is always a diophantine set.
  As a technical tool,
  we use a reciprocity exact sequence for quadratic Witt groups in function fields over almost arbitrary base fields
  (of any characteristic),
  which is new and of potentially independent interest.\footnotemark[0]
\end{abstract}
\maketitle
  \footnotetext[0]{This is the Accepted Manuscript of an article published online on April 28, 2026, in \emph{Journal f{\"u}r die reine und angewandte Mathematik}.
  The published version may be found at \href{https://doi.org/10.1515/crelle-2026-0026}{https://doi.org/10.1515/crelle-2026-0026}.}

\section{Introduction}

Given a field $F$, a subset $A$ of $F$ is \emph{diophantine} or \emph{existentially definable} if there exists a polynomial $f \in F[X, Y_1, \ldots, Y_m]$, for some $m \geq 0$, such that
\begin{displaymath}
A = \lbrace x \in F \mid \exists y_1, \ldots, y_m \in F : f(x, y_1, \ldots, y_m) = 0 \rbrace.
\end{displaymath}
Unless the arithmetic of $F$ is very well-understood (e.g. when $F$ is algebraically closed, real closed or $p$-adically closed), it is hard to concretely understand diophantine sets,
and in particular to decide whether a given subset of $F$ is diophantine.

For example, letting $F$ be the field of rational numbers $\qq$, it has been a longstanding open question whether the ring of integers $\zz$, seen as a subset of $\qq$, is diophantine.
It was shown in \cite{Koe16} that $\zz$ is \emph{universally definable} in $\qq$, i.e.~its complement $\qq \setminus \zz$ is diophantine. The subsequent papers \cite{Par13,Eis18,DaansGlobal} generalised Koenigsmann's technique, and it was ultimately shown that any finitely generated subring of a global field
is universally definable in its fraction field.
Other recent advances in the study of diophantine subsets of global fields include \autocite{Morrison_cyclic,Dit17}.

In this article, we are concerned with fields $F$ which are function fields in one variable over a base field $K$,
i.e.\ $F/K$ is a finitely generated extension of transcendence degree $1$.
We prove:
\begin{stel*}[{{\Cref{EtoAglobalex}}}]
  Let $K$ be a global or non-archimedean local field, $F$ a function field in one variable over $K$.
  For any finite set $S$ of places of $F/K$, the ring of $S$-integers of $F$ is universally definable in $F$,
  i.e.\ its complement is diophantine.
\end{stel*}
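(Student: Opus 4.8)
The ring of $S$-integers is the intersection of valuation rings $\mathcal{O}_S = \bigcap_{v \notin S} \mathcal{O}_v$, so its complement is $\{x \in F : v(x) < 0 \text{ for some place } v \notin S\}$, an a priori infinite union of diophantine sets; the plan is to collapse it to a single diophantine set. The basic building block is the Hilbert symbol: for $a,b \in F^\times$ the assertion \enquote{the quaternion algebra $(a,b)$ splits over $F$} is diophantine, being equivalent to solvability of $ax^2+by^2=z^2$ with $(x,y,z)\neq 0$, and more generally isotropy of a bounded‑dimension quadratic form over $F$ is diophantine. To keep the argument uniform in the characteristic — quaternion algebras are not the right tool when $\charac F = 2$ — I phrase everything in terms of the quadratic Witt group $W(F)$ and its symbols, so that the role played by \enquote{the (finite) set of places at which $(a,b)$ ramifies} is taken over by the set of places at which the second residue $\partial_v$ of a chosen Witt class is nonzero.

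The engine of the argument is a reciprocity exact sequence for Witt groups of the function field $F/K$, valid over essentially arbitrary $K$ and in every characteristic: for a class $\xi \in W(F)$ whose second residues vanish outside a finite set of places, the residues $\partial_v \xi$ at the remaining places are not arbitrary — the sum of their transfers to $W(K)$ vanishes. This is the precise analogue of the vanishing of the sum of local invariants of an element of $\Br(F)$ in the global setting, and it is what lets one certify that a quadratic form constructed over $F$ has exactly a prescribed residue behaviour.

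Now fix a place $v \notin S$ and consider an $x$ with $v(x) < 0$. The residue field $\kappa(v)$ is a finite extension of $K$, hence again a global or non‑archimedean local field; in particular it carries a nonzero element of $W(\kappa(v))$, which can be witnessed existentially. Using such a residue witness together with a uniformizer at $v$ — itself an existentially quantified parameter, so that no data tied to the specific place $v$ enters the final formula — I build a Witt class $\xi$ over $F$ depending on $x$ and on auxiliary elements, whose residue at $v$ is nonzero precisely when $v(x) < 0$, and whose only other possible nonzero residues lie at places of $S$ and at a controlled finite set of places lying over the distinguished place(s) of $K$ (the archimedean places when $K$ is a number field, the place of $K$ itself when $K$ is local). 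The diophantine condition that $\xi$, represented by an explicit symbol, be split over $F$ away from $S \cup \{v\}$ up to that controlled set, combined with the rigidity forced by the reciprocity sequence, then pins down \enquote{$v(x) < 0$}; the residual ambiguity at the controlled places is removed by intersecting with a second diophantine set built from a parallel family of symbols, or by an explicit scaling absorbing the bad residues. Since $S$ enters only as fixed parameters and $v$ enters only through quantified data, taking the union over all $v \notin S$ produces a single diophantine definition of $F \setminus \mathcal{O}_S$.

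The principal obstacle is twofold. First, one must establish the Witt‑group reciprocity sequence itself over base fields as general as these and in all characteristics; the characteristic‑$2$ case and the correct bookkeeping of the residue at the \enquote{infinite} place(s) — the analogue of the choice of residue at $\infty$ in Milnor's exact sequence for $K(t)/K$ — are the delicate points. Second, and this is where the real work of the Diophantine argument lies, one must arrange a Witt class over $F$ with \emph{exactly} the prescribed local behaviour: enough freedom to realize \enquote{$v(x) < 0$} whenever it holds, yet tight enough control at the finitely many uncontrolled places that the reciprocity constraint yields a genuine equivalence rather than a one‑sided implication. Achieving this balance, by exploiting the interplay between the reciprocity sequence and the arithmetic of the (local or global) residue fields, is the crux.
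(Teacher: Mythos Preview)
Your broad strategy---use residues of Pfister forms in the quadratic Witt group, a reciprocity exact sequence for $I_q^{d+1}F$, and the fact that isotropy is diophantine---matches the paper's. But two points in your plan are off in ways that would stall an actual proof.

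First, a conceptual slip. The places at issue are the $\zz$-valuations on $F$ \emph{trivial on $K$}; there is no ``controlled finite set of places lying over the distinguished place(s) of $K$'' and no archimedean contribution. The reciprocity sequence is
\[
I_q^{d+1}F \xrightarrow{\ \oplus\partial_v\ } \bigoplus_{v\in\Val(F/K)} I_q^d(Fv) \xrightarrow{\ \text{transfer}\ } I_q^d K,
\]
with $d=\dim_2 K$ (here $d=2$); the obstruction lives in $I_q^d K$, not at special places of $F$. The corrective manoeuvres you sketch for ``bad places'' are therefore aimed at a non-existent problem, while the real obstruction---the cokernel $I_q^d K$---is what you must exploit.

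Second, the two-sided control you flag as ``the crux'' is achieved by a specific parity trick you do not mention. Fix a nonzero $\alpha\in I_q^d K$, take $|S|$ odd, and prescribe at each $v\in S$ a $d$-fold Pfister form $q_v$ over $Fv$ whose transfer to $I_q^d K$ is $\alpha$ (possible since each transfer $I_q^d(Fv)\to I_q^d K$ is surjective). Any $(d+1)$-fold Pfister form $q$ over $F$ with $\partial_v q=[q_v]$ for $v\in S$ then has $\sum_{v\in S}\text{transfer}(\partial_v q)=|S|\alpha=\alpha\neq 0$, forcing $\delta q\not\subseteq S$: this gives the ``only if'' direction. Conversely, for any $w\notin S$, pick $q_w$ with transfer $\alpha$; then $\sum_{v\in S\cup\{w\}}[q_v]$ has transfer $(|S|+1)\alpha=0$, so by exactness it is $\oplus\partial_v$ of some class in $I_q^{d+1}F$---and linkage of $(d+1)$-fold Pfister forms (a nontrivial input over these base fields) lets you realise it by a single Pfister form with $\delta q=S\cup\{w\}$: this gives the ``if'' direction. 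Your plan to build $\xi$ directly from $x$ and a uniformiser misses this mechanism; in the paper's argument $x$ does not enter the Pfister form at all---the parameters $(a_1,\dots,a_d,b)$ are the quantified data, and membership of $x$ in $\bigcup_{w\notin S}\mathfrak m_w$ is tested against explicit sets $J_c(q),H_c(q)$ built from the (uniformly diophantine, by prior work) ring $\bigcap_{v\in\delta q}\mathcal O_v$.
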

One example of such a ring of $S$-integers is the ring $K[T]$ in the rational function field $F = K(T)$, so we show that $F \setminus K[T]$ is diophantine.
In the case of local fields, we can show more generally:
\begin{stel*}[{{\Cref{T:UniversalFunctionFieldMain}}}]
  Let $K$ be a non-archimedean local field, $F$ a regular function field in one variable over $K$.
  Any finitely generated $K$-subalgebra $R \subseteq F$ with fraction field $F$ is universally definable in $F$.
\end{stel*}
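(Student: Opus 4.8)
The plan is to reduce, by passing to the normalization of $R$ and exploiting the conductor, to two definability facts --- that every valuation ring of $F/K$ is existentially definable in $F$, and that the constant field $K$ is existentially definable in $F$ --- and then to establish the former, which is the crux, using the reciprocity exact sequence for quadratic Witt groups. First I would set up this reduction. Since $R$ is a finitely generated $K$-algebra which is a domain with $\Frac(R)=F$, the scheme $\operatorname{Spec}(R)$ is an integral affine curve over $K$; its normalization $\tilde R$ is module-finite over $R$ and equals the ring of $S$-integers $O_S$ of $F$ for a suitable finite set $S$ of places of $F/K$, and $\operatorname{Spec}(R)$ has only finitely many non-normal closed points, say $\Sigma=\{\mathfrak p_1,\dots,\mathfrak p_\ell\}$. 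From $R=\bigcap_{\mathfrak p}R_{\mathfrak p}$ (intersection over all closed points of $\operatorname{Spec}(R)$), together with the fact that $R_{\mathfrak p}$ is the discrete valuation ring $O_{v(\mathfrak p)}$ whenever $\mathfrak p\notin\Sigma$, one gets $R=O_{S'}\cap R_\Sigma$, where $R_\Sigma$ is the semilocalization of $R$ at $\Sigma$ and $S'$ consists of $S$ together with the (finitely many) places of $F/K$ lying over $\Sigma$. Hence $F\setminus R=(F\setminus O_{S'})\cup(F\setminus R_\Sigma)$; the set $F\setminus O_{S'}$ is diophantine by \Cref{EtoAglobalex}, and since diophantine subsets of $F$ are closed under finite unions and intersections (the latter because $F$ is not quadratically closed), it remains to show that $F\setminus R_\Sigma$ is diophantine --- and if $\Sigma=\varnothing$ there is nothing left to do.

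Set $A:=R_\Sigma$. Its normalization $\tilde A$ is a semilocal Dedekind domain, hence a principal ideal domain, equal to $O_{v_1}\cap\dots\cap O_{v_r}$ where $v_1,\dots,v_r$ are the places of $F/K$ over $\Sigma$; their residue fields, being finite extensions of $K$, are again non-archimedean local fields. Let $\mathfrak c=c\tilde A$, with $c\in F^\times$, be the conductor of $A$ in $\tilde A$, a nonzero ideal of both; then $\tilde A/\mathfrak c$ is a finite-dimensional $K$-vector space. Choosing $\eta_1,\dots,\eta_M\in A$ whose residues form a $K$-basis of $A/\mathfrak c$ and extending by $\eta_{M+1},\dots,\eta_N\in\tilde A$ to a $K$-basis of $\tilde A/\mathfrak c$, a routine computation with the conductor (using $\mathfrak c\subseteq A$) shows that for $x\in F$ one has $x\in A$ if and only if $x=\sum_{i=1}^{N}\mu_i\eta_i+ca$ for a necessarily unique tuple $\mu_1,\dots,\mu_N\in K$, $a\in\tilde A$ with $\mu_{M+1}=\dots=\mu_N=0$. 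Therefore
\[
  F\setminus A \;=\; \bigcup_{i=1}^{r}\{\,x\in F:v_i(x)<0\,\}\ \cup\ \{\,x\in F:\exists\,\mu_1,\dots,\mu_N\in K,\ \exists\,a\in\tilde A,\ x={\textstyle\sum_{i=1}^{N}}\mu_i\eta_i+ca,\ \mu_i\neq0\text{ for some }i>M\,\}.
\]
It thus suffices to prove: (A) for every place $v$ of $F/K$ the valuation ring $O_v$ is a diophantine subset of $F$; and (B) the constant field $K$ is a diophantine subset of $F$. Indeed, granting (A): fixing $\pi_i\in F$ with $v_i(\pi_i)=1$, we have $\{x:v_i(x)<0\}=\{x:\exists z\in O_{v_i},\,x\pi_i z=1\}$, which is diophantine, so the first union is; moreover $\tilde A=O_{v_1}\cap\dots\cap O_{v_r}$ is diophantine, so "$a\in\tilde A$" may be used inside an existential formula. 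Granting also (B), the second set above is then visibly diophantine, and the theorem follows.

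It remains to supply (A) and (B). Input (B) is the milder one: a non-archimedean local field $K$ admits an existentially definable valuation ring (for instance $\{a\in K:1+\pi a^2\in K^{\times 2}\}$ with $\pi$ a uniformizer when the residue characteristic is odd, with a standard modification at the dyadic place), and since $F/K$ is regular we have $K=\bigcap_v O_v$, from which one can extract an existential definition of $K$ in $F$. Input (A) is the heart of the matter and is where I expect the real difficulty to lie. The naive candidate $\{x\in F:1+\tau x^2\in F^{\times 2}\}$, with $\tau\in F^\times$ of odd value at $v$, picks out only those $x\in O_v$ for which $1+\tau x^2$ is a square in $F$ rather than merely at $v$, and the local-global principle for squares can fail over $F$. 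The remedy is to express "$v(x)\geq 0$" through the splitting of a quaternion algebra over $F$ whose behaviour is controlled at every place, and to invoke the reciprocity exact sequence for the quadratic Witt groups of $F/K$: it identifies exactly which families of local quaternion symbols are realised by a global quaternion algebra, so that the discrepancy between "split at $v$" and "split over $F$" can be corrected with a fixed finite collection of auxiliary elements, turning "$v(x)\geq 0$" into a positive-existential condition; the condition "$v(x)<0$" is handled dually, via a quaternion algebra ramified at $v$ together with one controlled auxiliary place, refining the argument behind \Cref{EtoAglobalex} from a cofinite set of places down to a single place. Carrying this out uniformly over all places --- notably those above the residue characteristic of $K$ and the dyadic place, where the local Witt groups are most delicate --- is precisely the obstacle that the new reciprocity sequence is designed to overcome.
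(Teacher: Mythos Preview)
Your reduction via the normalisation and the conductor is essentially the paper's \Cref{propfingen}, reorganised to describe $F \setminus R$ existentially rather than $R$ universally; the two are equivalent, and both reduce the theorem to the three inputs you isolate: (A) each $\mathcal{O}_v$ is $\exists$-definable in $F$, (B) $K$ is $\exists$-definable in $F$, and the $\forall$-definability of $\mathcal{O}_S$ for finite $S$ (which you correctly take from \Cref{EtoAglobalex}).

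There are, however, genuine gaps in your treatment of (A) and (B). For (B), the identity $K = \bigcap_{v \in \Val(F/K)} \mathcal{O}_v$ yields only that $K = \mathcal{O}_\emptyset$ is $\forall$-definable, not $\exists$-definable, and the existence of an $\exists$-definable valuation ring \emph{inside} $K$ does not by itself produce an $\exists$-definition of $K$ \emph{inside} $F$. The paper instead uses that a non-archimedean local field is large and invokes \cite[Theorem 2]{Koe02} to conclude that the constant field of a regular function field over a large field is $\exists$-definable. For (A), you give only a sketch, and you mislocate the mechanism: the reciprocity exact sequence is \emph{not} what establishes the existential definability of a single $\mathcal{O}_v$. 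In the paper, (A) is imported from \cite[Theorem 9.12]{Andromeda-1}, which relies on local--global arguments for Pfister forms over function fields of large fields; the reciprocity sequence enters only inside the proof of \Cref{EtoAglobalex}, which you are already invoking as a black box. Relatedly, since $\dim_2 K = 2$ for $K$ a non-archimedean local field, the relevant global objects are $3$-fold Pfister forms over $F$ (classes in $I_q^3 F$), not quaternion algebras over $F$; the residue forms over $Fv$ are $2$-fold, but the forms over $F$ one must produce and manipulate are one degree higher.
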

In fact, our theorems hold for much more general base fields than global and local fields,
although the precise hypotheses are somewhat difficult to state;
see therefore \Cref{sect:universalFunctionfield} for the exact statements.
Beyond global and local fields, the base fields $K$ to which our theorems apply include many complete discretely valued fields (like $\cc(\!(t)\!)$, the field of formal Laurent series with complex coefficients, as well as so-called higher local fields), and many pseudo-algebraically closed fields, including pseudo-finite fields, for which first-order definability of rings of $S$-integers was recently discussed independently in \cite{Nguyen-LocalSymbols}.
See \Cref{E:large-basefields-definability} for further discussion.
If $K$ is a finite field, then a function field in one variable $F/K$ is a global field;
our results can thus be seen as a variation of the aforementioned results of \cite{Eis18, DaansGlobal}.

Existing work on diophantine subsets of function fields has often been motivated by questions surrounding decidability of their (existential) first-order theories.
The undecidability of the existential first-order theory (for some suitable choice of admissible parameters) over function fields $F$
as occurring in our main theorems has been known for some time --
see \autocite{Mor05,EisHil10padic} for characteristic zero and \autocite{Shlapentokh_Pacific00} for positive characteristic
(see also \cite{EisShlap17} for quite general results in function fields in positive characteristic,
and \cite[Theorem II]{Andromeda-1} for some more recent cases, concerning new examples of base fields $K$).
These undecidability proofs required to start building a library of ``arithmetically significant'' diophantine subsets of the fields in question.
We mention in particular the role of diophantine valuation rings in function fields, first utilised (albeit implicitly) in \cite{DenefDiophantine}.
See also \cite[§6]{MillerShlapentokh_v2}, \cite{Andromeda-1} for more on diophantine valuation rings in function fields.
Beyond the prerequisites of these undecidability proofs, however, not much work seems to have been done regarding the diophantineity of arithmetically interesting subsets of such function fields.

The question of whether $\zz$ is diophantine in $\qq$ is motivated by the search for an analogue of the
Davis--Putnam--Robinson--Matiyasevich (`DPRM') Theorem \cite{Mat70}:
it is equivalent to the question of whether every computably enumerable subset of $\qq$ is diophantine.
See \cite[§4]{Pasten} for an analysis of this property for very general structures,
and in particular Proposition 4.15 there for the case of $\qq$.
Koenigsmann's universal definition of $\zz$ in $\qq$ yields the statement that
every computably enumerable relation on $\qq$ can be defined using an existential-universal formula
(see for instance \cite[Corollary 6.2]{Daans_Defining10});
this is weaker than asserting that computably enumerable sets are diophantine.
From our main results we obtain the following analogous statement
(relying on \cite[Theorem 7.1]{MillerShlapentokh_v2} instead of the DPRM Theorem):
\begin{stel*}[{{\Cref{ce-sets-ea-definable}}}]
  Let $K$ be a number field and $F$ a function field in one variable over $K$.
  Then every computably enumerable relation on $F$ can be defined using an existential-universal formula.
\end{stel*}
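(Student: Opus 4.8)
The plan is to follow the template by which Koenigsmann's universal definition of $\zz$ in $\qq$, combined with the DPRM theorem, yields that every computably enumerable relation on $\qq$ is existential-universal definable (cf.\ \cite[Corollary 6.2]{Daans_Defining10}), now replacing Koenigsmann's theorem by \Cref{EtoAglobalex} and the DPRM theorem by \cite[Theorem 7.1]{MillerShlapentokh_v2}. First I would fix once and for all a ring of $S$-integers $O \subseteq F$ with $\Frac(O) = F$ — for instance $O = K[T]$ if $F = K(T)$, and in general the ring of $S$-integers for any nonempty finite set $S$ of places of $F/K$. By \Cref{EtoAglobalex} the complement $F \setminus O$ is diophantine, so there is a quantifier-free formula $\varphi(y, \bar v)$ (with parameters from $F$) such that ``$y \in O$'' is equivalent in $F$ to $\forall \bar v\, \varphi(y, \bar v)$.

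Let $A \subseteq F^n$ be computably enumerable. I would transfer $A$ to a relation on $O$: writing a point of $F$ as a tuple of quotients $a_i/b_i$ with $a_i, b_i \in O$ and $b_i \neq 0$ is a computable operation once one fixes computably compatible presentations of $O$ and $F = \Frac(O)$, so the lift
\begin{displaymath}
  \tilde A = \{(\bar a, \bar b) \in O^n \times O^n : b_1 \cdots b_n \neq 0,\ (a_1/b_1, \ldots, a_n/b_n) \in A\}
\end{displaymath}
is a computably enumerable relation on $O$. Now \cite[Theorem 7.1]{MillerShlapentokh_v2}, which serves here as the function-field analogue of the DPRM theorem for such rings of $S$-integers, yields that $\tilde A$ is diophantine over $O$, say $(\bar a, \bar b) \in \tilde A$ if and only if $\exists \bar c \in O : p(\bar a, \bar b, \bar c) = 0$ for a suitable polynomial $p$ over $O$. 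Combining, for $\bar x \in F^n$ one gets that $\bar x \in A$ is equivalent to the existence of $\bar a, \bar b, \bar c \in O$ satisfying $\bigwedge_i (b_i \neq 0)$, $\bigwedge_i (x_i b_i = a_i)$, and $p(\bar a, \bar b, \bar c) = 0$.

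It then remains to remove the restriction that the quantified variables lie in $O$. I would rewrite each ``$\exists z \in O$'' as ``$\exists z \in F$'' together with the conjunct ``$z \in O$'', and substitute the universal definition $\forall \bar v\, \varphi(z, \bar v)$ for the latter. This produces a formula $\exists \bar a\, \bar b\, \bar c \in F$ whose matrix is the conjunction of a quantifier-free formula with finitely many blocks $\forall \bar v_j\, \varphi(z_j, \bar v_j)$; as these blocks occur positively, over pairwise disjoint tuples of variables that do not appear in the quantifier-free part, the matrix is logically equivalent to a single universal formula, and hence the whole formula to a prenex $\exists \bar u\, \forall \bar v\, \theta(\bar x, \bar u, \bar v)$ with $\theta$ quantifier-free. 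This is an existential-universal formula defining $A$, as desired.

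The only points I expect to require genuine care — rather than a conceptual obstacle — are the bookkeeping around the presentations (one must ensure the chosen presentation of $F$ is computably compatible with a presentation of $O$, so that passing from $A$ to $\tilde A$ preserves computable enumerability) and checking that \cite[Theorem 7.1]{MillerShlapentokh_v2} genuinely applies to the specific ring $O$ selected above, i.e.\ that $O$ carries whatever structure (such as a diophantine model of $\nat$) the cited theorem requires; alternatively, if that theorem is already phrased directly for $F$ relative to a predicate for $O$, the coding step can be skipped entirely. The two substantive inputs, \Cref{EtoAglobalex} and \cite[Theorem 7.1]{MillerShlapentokh_v2}, are in hand, so beyond them the argument is routine first-order manipulation.
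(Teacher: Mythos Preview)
Your proposal is correct and follows essentially the same approach as the paper: fix a ring of $S$-integers with fraction field $F$, lift the c.e.\ relation to a c.e.\ relation on this ring, apply \cite[Theorem 7.1]{MillerShlapentokh_v2} to get an existential definition there, and then use the universal definition of the ring in $F$ from \Cref{EtoAglobalex} to unwind this into an $\exists\forall$-formula over $F$. The only cosmetic differences are that the paper uses a single common denominator $b$ (so the lift lives in $\mc{O}_S^{k+1}$ rather than $\mc{O}_S^{2k}$), and it flags explicitly that \cite[Theorem 7.1]{MillerShlapentokh_v2} is literally stated only for subsets ($k=0$) rather than relations, with the extension to arbitrary $k$ being clear from the proof there --- a point you should also note when invoking it.
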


Let us now give a high-level overview of the proof of our main theorems.
Our method can be seen as further development of the one used in \cite{DaansGlobal} to define subrings of global fields,
later optimised in \cite{Daans_Defining10},
but is more technically demanding.
In \cite{DaansGlobal} (as in previous work),
quaternion algebras over global fields were the central arithmetical tool.
Specifically, it was shown essentially in \cite{Poo09} that valuation rings of global fields are uniformly existentially definable, and these existential definitions can be parametrised via the structure constants of quaternion algebras.
This was then already used in \cite{Poo09} to show that $\zz$ is universally-existentially definable in $\qq$; the corresponding result for rings of $S$-integers in the function fields we study was recently shown in \cite[Theorem III]{Andromeda-1}.

The chief new addition made in \cite{Koe16} to obtain universal definability of $\zz$ in $\qq$ is usage of the Quadratic Reciprocity Law.
In \cite{DaansGlobal}, in order to include fields of characteristic $2$, the following version of Quadratic Reciprocity was isolated instead as the key ingredient:
a quaternion algebra over a global field always ramifies over an even number of places,
and conversely any finite set of places of even cardinality occurs as the ramification set of a quaternion algebra.
In the present paper, instead of quaternion algebras, we systematically employ Pfister forms,
which we already used in our study of definability of valuation rings in \cite{Andromeda-1};
this entails a certain amount of care concerning quadratic form theory,
in particular in order to include the case of characteristic $2$ throughout,
although in various places we also point out possible shortcuts for the case of characteristic not $2$.
We develop an appropriate analogue for the classical results concerning ramification of quaternion algebras over global fields:
a certain reciprocity exact sequence for quadratic Witt groups in function fields replaces the role of Quadratic Reciprocity,
see \Cref{T:Reciprocity-Main}.
This appears to be new and potentially of independent interest from a quadratic form theory perspective.

The use of Pfister forms for definability purposes is by now well-established --
see for instance \cite{Pop_ElemEquivVsIso,DittmannPop_v1}, as well as \cite{Andromeda-1}.
However, previous work has mainly relied on local-global principles for Pfister forms.
Our use of the reciprocity exact sequence therefore introduces a new arithmetic tool into the study of definability.

This paper is structured as follows.
The first two sections contain no material on definability and are of a purely algebraic nature.
In particular, \Cref{sec:QF-preliminaries} discusses some background from quadratic form theory and valuation theory, which is mostly well-known, although we take extra care to formulate all statements without restrictions on the characteristic.
In \Cref{sec:reciprocity} we then state and prove the announced reciprocity exact sequence for quadratic Witt groups in function fields.
For the remainder of the paper, the only thing the reader needs to remember from \Cref{sec:reciprocity} are the statements of \Cref{T:Reciprocity-Main} and \Cref{lem:transfer-surj-kernel}.

In \Cref{sect:UniversalDefinitionGeneralTechnique} we develop in general terms the technique to obtain universal definability of rings of $S$-integers by making use of exact reciprocity laws.
In \Cref{sect:universalFunctionfield} we then put all this together to obtain our main results on definability of rings of $S$-integers in function fields, and discuss some applications.

\subsection*{Acknowledgements}
Part of this work was developed during N.~D.'s PhD project and contained in his PhD thesis \cite{DaansThesis}, prepared under supervision of Karim Becher and P.~D.\ at the University of Antwerp, and supported by the FWO PhD Fellowship fundamental research grants 51581 and 83494.
N.~D.~further gratefully acknowledges support by {Czech Science Foundation} (GA\v CR) grant 21-00420M, and {Charles University} PRIMUS Research Programme PRIMUS/24/SCI/010.
P.~D.\ would like to thank the Hausdorff Research Institute for Mathematics Bonn,
funded by the DFG under Germany's Excellence Strategy (EXC-2047/1 -- 390685813),
for its hospitality during the trimester programme ``Definability, decidability, and computability''
while this paper was revised.

We are grateful to Yong Hu for suggesting a version of the reciprocity exact sequence
(\Cref{T:Reciprocity-Main}, in a different formulation) in characteristic different from $2$,
as well as providing us with a detailed proof sketch thereof,
although the argument we ultimately give below is entirely different.

We would further like to thank Detlev Hoffmann for making us aware of the article \cite{Aravire-Jacob}.

Finally, we thank the two anonymous referees for their thorough reading of our manuscript and their invaluable suggestions, which have helped us to improve several aspects of the paper's presentation.

\section{Quadratic forms, valuations, residues, transfers}\label{sec:QF-preliminaries}
We will discuss some basic (and some less basic) concepts from quadratic form theory over fields of arbitrary characteristic.
We follow for the most part the setup of \cite{ElmanKarpenkoMerkurjev}, and refer the reader to that book for more details and context.

Let always $K$ be a field.
We denote by $\nat$ the set of natural numbers including zero, and by $\nat^+$ the set of non-zero natural numbers.

Let $V$ be a finite-dimensional vector space over $K$.
A \emph{bilinear form on $V$} is simply a bilinear map $B: V \times V \to K$; it is called \emph{symmetric} if $B(v,w) = B(w, v)$ for all $v, w \in V$.
A \emph{quadratic form on $V$} is a map $q : V \to K$ such that $q(av) = a^2q(v)$ for all $v \in V$ and $a \in K$, and such that the associated map
$$ \mf{b}_q : V \times V \to K : (v, w) \mapsto q(v+w)- q(v) - q(w),$$
called the \emph{polar form} of $q$,
is a bilinear form.
A quadratic form $q$ on some finite-dimensional vector space over $K$ will also be called a \emph{quadratic form over $K$}, the dimension of the underlying vector space $V$ will also be called the \emph{dimension} of the quadratic form $q$ and denoted by $\dim(q)$.

If $q_1$ and $q_2$ are quadratic forms on $V_1$ and $V_2$ respectively, and if there exists an embedding $L: V_1 \to V_2$ such that $q_2(L(v)) = q_1(v)$ for all $v \in V_1$, then we call $q_1$ a \emph{subform} of $q_2$.
If additionally $\dim(V_1) = \dim(V_2)$, we say that $q_1$ and $q_2$ are \emph{isometric} and denote this by $q_1 \cong q_2$.

If $f \in K[X_1, \ldots, X_n]$ is a homogeneous polynomial of degree $2$ (or $f$ is identically zero), then the map $K^n \to K : (x_1, \ldots, x_n) \mapsto f(x_1, \ldots, x_n)$ is a quadratic form.
Conversely, up to isometry, every quadratic form over $K$ is obtained in such a way.
In most contexts we will freely identify isometric quadratic forms, and as such there is no harm in representing quadratic forms over $K$ by homogeneous degree $2$ polynomials over $K$.

We call a quadratic form $q$ on $V$ \emph{isotropic} if there exists $v \in V \setminus \lbrace 0 \rbrace$ with $q(v) = 0$, \emph{anisotropic} otherwise.
We call $q$ \emph{non-singular} if the bilinear form $\mf{b}_q$ is non-degenerate, i.e.~for all $v \in V$, $v \neq 0$, there exists $w \in V$ with $q(v+w) \neq q(v) + q(w)$.
A non-singular quadratic form $q : V \to K$ is called \emph{hyperbolic} if there is a subspace $W \subseteq V$ with $2\dim(W) = \dim(V)$ and such that $q|_W$ is identically zero; by \cite[Proposition 7.28]{ElmanKarpenkoMerkurjev} a quadratic form $q$ of dimension $2n$ is hyperbolic if and only if it is isometric to the quadratic form
$$ K^{2n} \to K : (x_1, \ldots, x_n, y_1, \ldots, y_n) \mapsto \sum_{i=1}^n x_iy_i.$$

One may naturally endow the set of isometry classes of even-dimensional non-singular quadratic forms over a field $K$ with the structure of an abelian monoid.
The quotient of this monoid modulo the submonoid of hyperbolic forms then carries the structure of an abelian group, called the \emph{quadratic Witt group of $K$}, which we denote by $I_qK$.
See \cite[Section 8]{ElmanKarpenkoMerkurjev} for a precise definition and discussion of the quadratic Witt group.\footnote{The definition on \cite[51]{ElmanKarpenkoMerkurjev} seems to suggest that one needs to add formal inverses to equivalence classes of quadratic forms to obtain a group, but this is not the case: for a non-singular even-dimensional quadratic form $q : V \to K$, a representative of its inverse in $I_qK$ is given by the form $-q : V \to K : x \mapsto -q(x)$.}
Two non-singular even-dimensional quadratic forms representing the same class in $I_q K$ are called \emph{Witt equivalent}.
Every Witt equivalence class contains precisely one isometry class of an anisotropic quadratic form; as such, we may identify $I_qK$ as a set with the set of isometry classes of anisotropic non-singular even-dimensional quadratic forms over $K$.
More specifically, when $q : V \to K$ is an anisotropic non-singular even-dimensional quadratic form over $K$, the forms Witt equivalent to $q$ are precisely those isometric to
$$ V \times K^{2n} : (z, (x_1, \ldots, x_n, y_1, \ldots, y_n)) \mapsto q(z) + \sum_{i=1}^n x_iy_i$$
for some $n \in \nat$ \cite[Theorem 8.5]{ElmanKarpenkoMerkurjev}.
In particular, the neutral element of $I_q K$ is the class of all hyperbolic forms, in particular of the unique (up to isometry) zero-dimensional quadratic form over $K$, which is anisotropic and non-singular.

Similarly, on the class of non-degenerate symmetric bilinear forms, one may define the notions of isometry, isotropy, and Witt equivalence, and the Witt equivalence classes of non-degenerate bilinear forms over $K$ form a ring called the \emph{Witt ring} and denoted $WK$; see \cite[Section 1]{ElmanKarpenkoMerkurjev} for details.
We denote by $IK$ the ideal of $WK$ of classes of even-dimensional non-degenerate symmetric bilinear forms, and for $n \in \nat^+$, $I^nK$ denotes the $n$th power of the ideal $IK$.
When $\charac(K) \neq 2$, there is a natural isomorphism $I_q K \to IK$ defined by mapping the class of a quadratic form $q$ to that of its polar form $\mf{b}_q$.
For an even-dimensional non-singular quadratic form $q$, we shall denote by $[q]$ its equivalence class in $I_qK$; similarly, for a non-degenerate symmetric bilinear form $B$, we denote by $[B]$ its equivalence class in $WK$.

A special class of symmetric bilinear/quadratic forms is formed by Pfister forms.
Let us first define recursively the notion of an \emph{$n$-fold bilinear Pfister form} for $n \in \nat$.
Given a field $K$ and $a_1, \ldots, a_n \in K^\times$, the \emph{$n$-fold bilinear Pfister form} $\llangle a_1, \ldots, a_n \rrangle^b_K$ is a $K$-bilinear form on $K^{2^n}$.
The $0$-fold bilinear Pfister form $\llangle\rrangle^b_K$ is the bilinear form on the $1$-dimensional vector space $K$ given by
$$ K \times K \to K : (x, y) \mapsto xy. $$
Given an $n$-fold bilinear Pfister form $B = \llangle a_1, \ldots, a_n \rrangle^b_K : V \times V \to K$ and $a_{n+1} \in K^\times$, $\llangle a_1, \ldots, a_{n+1} \rrangle^b_K$ is defined by the mapping
$$ (V \times V) \times (V \times V) \to K : ((v_1 , v_2), (w_1 , w_2)) \mapsto B(v_1, w_1) - a_{n+1}B(v_2, w_2). $$
We refer to \autocite[Section 6]{ElmanKarpenkoMerkurjev} for more details.

For later use, we mention a result on bilinear forms over valued fields.
Our notational conventions concerning valuations are as follows.
For a (Krull) valuation $v$ on a field $K$, we denote by $\mc{O}_v$, $\mf{m}_v$, $vK$, $Kv$, $K_v$ the valuation ring, valuation ideal, value group, residue field, and henselisation, respectively.
When $x \in \mc{O}_v$ we shall denote by $\ovl{x}$ its equivalence class in $Kv = \mc{O}_v/\mf{m}_v$, assuming that the valuation is clear from the context.
We call a valuation $v$ \emph{dyadic} if $\charac(Kv) = 2$ (or equivalently, if $v(2) > 0$), \emph{non-dyadic} otherwise.
We will use the term \emph{$\zz$-valuation} for a valuation whose value group is precisely $\zz$.

The argument for the following borrows ideas from the proof of \autocite[Proposition 4.1]{BecherRaczek}.
\begin{lem}\label{L:Pfisterbilinear}
Let $B = \llangle x_1, \ldots, x_n \rrangle_K^b$ for some $n \in \mathbb{N}^+$ and $x_1, \ldots, x_n \in K^\times$, and let $S$ be a finite set of $\zz$-valuations on $K$. Then there exist $a_1, \ldots, a_n \in K^\times$ such that $B \cong \llangle a_1, \ldots, a_n \rrangle_K^b$ and $a_1, \ldots, a_{n-1} \in \bigcap_{v \in S} \mathcal{O}_v^\times$.
\end{lem}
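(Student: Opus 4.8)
The plan is to argue by induction on $n$, reducing the whole statement to one key step: for $n \geq 2$, a bilinear Pfister form can be rewritten (through a chain of isometries) with its \emph{first} slot an $S$-unit. Granting this, for $n = 1$ there is nothing to prove (take $a_1 = x_1$), and for $n \geq 2$ I would first rewrite $B \cong \llangle a_1, y_2, \ldots, y_n \rrangle^b_K$ with $a_1 \in \bigcap_{v \in S} \mathcal{O}_v^\times$, then apply the inductive hypothesis to the $(n-1)$-fold Pfister form $\llangle y_2, \ldots, y_n \rrangle^b_K$ to obtain $\llangle y_2, \ldots, y_n \rrangle^b_K \cong \llangle a_2, \ldots, a_n \rrangle^b_K$ with $a_2, \ldots, a_{n-1} \in \bigcap_{v \in S} \mathcal{O}_v^\times$, and finally tensor by $\llangle a_1 \rrangle^b_K$, using $\llangle a_1, a_2, \ldots, a_n \rrangle^b_K \cong \llangle a_1 \rrangle^b_K \otimes \llangle a_2, \ldots, a_n \rrangle^b_K$, to conclude. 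Here $a_1$ is untouched in the second half because it lives in a tensor factor disjoint from the part being rewritten.

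For the key step I would pass to the pure part $B'$ of $B$ (so $B \cong \langle 1 \rangle \perp B'$, with $B'$ of dimension $2^n - 1$) and use a standard structural fact about bilinear Pfister forms (see \cite[Section~6]{ElmanKarpenkoMerkurjev}): if $e \in K^\times$ is represented by $B'$, then $B \cong \llangle -e, c_2, \ldots, c_n \rrangle^b_K$ for suitable $c_2, \ldots, c_n \in K^\times$. So it suffices to produce $e \in K^\times$ represented by $B'$ with $v(e) = 0$ for every $v \in S$, and then put $a_1 := -e$ (a unit at every $v \in S$, as $-1$ is). The crucial observation, valid because $n \geq 2$, is that expanding the tensor product defining $B$ shows $B'$ contains $\tau := \langle -x_1 \rangle \perp \langle -x_2 \rangle \perp \langle x_1 x_2 \rangle$ as a subform (in characteristic $2$ the signs play no role); hence it is enough to represent such an $e$ already by $\tau$.

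To do so I would invoke weak approximation, which applies since distinct $\zz$-valuations are of rank one, hence pairwise incomparable. The key is an elementary parity remark: for any $\zz$-valuation $v$ and any $x_1, x_2 \in K^\times$, the three integers $v(x_1)$, $v(x_2)$, $v(x_1 x_2) = v(x_1) + v(x_2)$ cannot all be odd, so for each $v \in S$ I may fix one entry $d_{j(v)} \in \{-x_1, -x_2, x_1 x_2\}$ of $\tau$ with $v(d_{j(v)})$ even. By weak approximation choose $t_1, t_2, t_3 \in K^\times$ such that for every $v \in S$ we have $v(t_{j(v)}) = -\tfrac{1}{2} v(d_{j(v)})$, while $v(t_k)$ is large (exceeding a bound depending only on the finitely many integers $v(x_i)$, $v \in S$) for the two indices $k \neq j(v)$. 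Then $e := -x_1 t_1^2 - x_2 t_2^2 + x_1 x_2 t_3^2$ is represented by $\tau$ via the nonzero vector $(t_1, t_2, t_3)$, and at each $v \in S$ the summand $d_{j(v)} t_{j(v)}^2$ has $v$-value $0$ whereas the other two summands have positive $v$-value, so $v(e) = 0$; in particular $e \in K^\times$, as needed.

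I expect the main obstacle to be identifying and correctly citing, in arbitrary characteristic, the structural fact about Pfister forms that lets one pass from ``$e$ is represented by the pure part'' to ``$e$ occurs as the negative of a slot'' — and, relatedly, the realization that merely multiplying a slot by a square (the obvious thing to try) is \emph{not} enough to correct a slot of odd $v$-value; one needs the genuine three-term flexibility of the combination $-x_1 t_1^2 - x_2 t_2^2 + x_1 x_2 t_3^2$ sitting inside the pure part. The remaining ingredients — the parity remark and weak approximation for pairwise incomparable valuations — are routine, and the hypothesis that the elements of $S$ are $\zz$-valuations enters precisely to make weak approximation available. This strategy parallels that of \cite[Proposition~4.1]{BecherRaczek}. (Should $B$ happen to be isotropic, i.e.\ metabolic, then for $n \geq 2$ its pure part $B'$ is already isotropic, hence universal, so a suitable $e$ — indeed any unit — is trivially available and the argument is unaffected.)
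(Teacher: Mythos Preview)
Your approach is correct in spirit and genuinely different from the paper's, but there is a gap in characteristic~$2$ that you should be aware of.

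\textbf{Comparison.} The paper reduces to $n=2$ by induction and then applies the explicit identity $\llangle x,y\rrangle^b \cong \llangle x+y,-xy\rrangle^b$ from \cite[Lemma~4.15(2)]{ElmanKarpenkoMerkurjev} twice, after first normalising the pair $(v(x),v(y))$ by weak approximation; the unit slot emerges from an explicit formula $a_1 = ts+rs-tr$. Your route --- represent an $S$-unit $e$ by the three-term subform $\tau=\langle -x_1,-x_2,x_1x_2\rangle$ of the pure part via the parity observation, then invoke the Pure Subform Theorem to make $-e$ a slot --- is conceptually cleaner and avoids the case split on $(v(x),v(y))$. In characteristic $\neq 2$ both work; your argument is arguably more transparent.

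\textbf{The gap.} Your treatment of the isotropic (metabolic) case is not correct in characteristic~$2$. Two points:
\begin{itemize}
\item The implication ``$B'$ isotropic $\Rightarrow$ $B'$ universal'' fails for \emph{bilinear} forms in characteristic~$2$: over $K=\mathbb{F}_2(s,t)$ the metabolic form $\llangle s,s\rrangle^b$ has pure part $\langle s,s,s^2\rangle$, which is isotropic but has value set $sK^2+sK^2+s^2K^2=K^2+sK^2$, a proper $K^2$-subspace of $K$.
\item More seriously, the Pure Subform Theorem you rely on (e.g.\ \cite[Proposition~6.25]{ElmanKarpenkoMerkurjev}) is stated for \emph{anisotropic} bilinear Pfister forms. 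In the metabolic case you have not justified that $e\in D(\tau)$ forces $\llangle x_1,x_2\rrangle^b\cong\llangle -e,c\rrangle^b$.
\end{itemize}
Your parity construction of $e$ actually works regardless of anisotropy, so the parenthetical is a red herring; the real issue is the structural step afterwards. The gap is fillable: writing $e=-x_1t_1^2 - x_2(t_2^2 - x_1t_3^2)$ and using roundness of $\llangle x_1\rrangle^b$ together with the identity $\llangle x,y\rrangle^b\cong\llangle x+y,-xy\rrangle^b$ (valid whenever $x+y\neq 0$, no anisotropy needed) recovers the conclusion in all cases --- but at that point you have essentially reproduced the paper's computation for $n=2$.
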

\begin{proof}
We may assume that $S \neq \emptyset$, since otherwise we can set $a_i = x_i$ for $1 \leq i \leq n$.
For $n=1$ there is nothing to show, we now consider the case $n=2$.

Write $B = \llangle x, y \rrangle^b_K$.
By the Weak Approximation Theorem, we may multiply $x$ and $y$ by an appropriate square to obtain without loss of generality that for all $v \in S$ one has  $(v(x), v(y)) \in \lbrace (0, 1), (1, 0), (1, 1), (2, 2) \rbrace$ and $x \neq -y$.
Using Weak Approximation again, we can then find $r, s, t \in \bigcap_{v \in S} \mathcal{O}_v$ such that $x = ts$, $y = rs$, $ts + rs \neq tr$,  and for every $v \in S$, precisely one of $r, s, t$ lies in $\mathfrak{m}_v$.

Now consider the following computation, invoking repeatedly \autocite[Lemma 4.15~(2)]{ElmanKarpenkoMerkurjev}:
\begin{displaymath}
\llangle x, y \rrangle^b_K \cong \llangle x+y, -xy \rrangle^b_K \cong \llangle ts + rs, -tr \rrangle^b_K \cong \llangle ts + rs - tr, tr(ts + rs) \rrangle^b_K.
\end{displaymath}
Setting $a_1 = ts + rs - tr$ and $a_2 = tr(ts + rs)$, it follows that $B \cong \llangle a_1, a_2 \rrangle^b_K$. By the assumptions on $r, s, t$ we also obtain that $a_1 \in \bigcap_{v \in S} \mathcal{O}_v^\times$.
This concludes the proof in the case $n=2$.

For $n \geq 3$ we argue by induction on $n$. By the induction hypothesis, we may first find $a_{n-1}' \in K^\times$ and $a_1, \ldots, a_{n-2} \in \bigcap_{v \in S} \mc{O}_v^\times$ such that $\llangle x_1, \ldots, x_{n-1} \rrangle_K^b \cong \llangle a_1, \ldots, a_{n-2}, a_{n-1}' \rrangle_K^b$, and then we may find $a_n \in K^\times$ and $a_{n-1} \in \bigcap_{v \in S} \mc{O}_v^\times$ such that $\llangle a_{n-1}', x_n \rrangle_K^b \cong \llangle a_{n-1}, a_n \rrangle_K^b$.
One then easily computes that
$$
B \cong \llangle a_1, \ldots, a_{n-2}, a_{n-1}', x_n \rrangle_K^b \cong \llangle a_1, \ldots, a_{n-2}, a_{n-1}, a_n \rrangle_K^b
$$
as desired (using that we have the computation rule
$$ \llangle y_1, \ldots, y_i, y_{i+1}, \ldots, y_n \rrangle_K^b \cong \llangle y_1, \ldots, y_{i-1}, y_{i+1}, y_i, y_{i+2}, \ldots, y_n \rrangle_K^b$$
for all $1 \leq i \leq n-1$ and $y_1, \ldots, y_n \in K^\times$.)
\end{proof}

Let us now define the notion of an \emph{$(n+1)$-fold quadratic Pfister form} for $n \in \nat$.
Let $a_1, \ldots, a_{n}, b \in K$ with $a_1, \ldots, a_{n}, 1+4b \neq 0$.
The $1$-fold quadratic Pfister form $\llangle b ]]_K$ is the form
$$  \llangle b ]]_K : K^2 \to K : (x_1, x_2) \mapsto x_1^2 - x_1 x_2 - bx_2^2. $$
Given an $n$-fold quadratic Pfister form $q = \llangle a_1, \ldots, a_{n-1}, b]]_K : V \to K$ and $a_n \in K^\times$, we define the $(n+1)$-fold quadratic Pfister form $\llangle a_1, \ldots, a_{n}, b]]_K$ as
$$ \llangle a_1, \ldots, a_{n}, b]]_K : V^2 \to K : (v_1, v_2) \mapsto q(v_1) - a_{n}q(v_2).$$
Equivalently, one sees that $\llangle a_1, \ldots, a_n, b]]_K$ is the \emph{tensor product} of the bilinear form $\llangle a_1, \ldots, a_n \rrangle^b_K$ and the $1$-fold Pfister form $\llangle b]]_K$, i.e.~$\llangle a_1, \ldots, a_n, b]]_K = \llangle a_1, \ldots, a_n \rrangle_K^b \otimes \llangle b ]]_K$; see \cite[51]{ElmanKarpenkoMerkurjev} for a general definition of the tensor product of a bilinear and a quadratic form.
In the rest of this paper, when we refer to Pfister forms, we will always mean quadratic Pfister forms.\footnote{Note
  that as in \cite{Andromeda-1},
  our notation $\llangle a_1, \dotsc, a_n, b]]_K$ disagrees with the notation in \cite{ElmanKarpenkoMerkurjev}, but leads to the same notion of quadratic Pfister forms (up to isometry);
  see \cite[Remark 3.2]{Andromeda-1}.}

An $n$-fold Pfister form over $K$ is non-singular, and is either anisotropic or corresponds to the zero element in the group $I_qK$.
For $n \geq 2$ denote by $I_q^n K$ the subgroup of $I_qK$ generated by the classes of $n$-fold Pfister forms, and set $I_q^1K = I_qK$.
We then have $I_qK = I_q^1K \supseteq I_q^2 K \supseteq I_q^3K \supseteq \ldots$.
If $\charac(K) \neq 2$, then under the aforementioned isomorphism $I_qK \to IK$, $I^n_q K$ maps onto $I^n K$.

We now discuss some maps between quadratic Witt groups over different fields.
Let us first consider a field extension $L/K$.
There is a natural \emph{restriction} homomorphism $I_q K \to I_q L$ mapping the class of a quadratic form $q : V \to K$ to that of its scalar extension to $L$, i.e.~the unique quadratic form $q_L$ on the $L$-vector space $V \otimes_K L$ such that $q_L(v \otimes b) = b^2q(v)$ and $\mf{b}_{q_L}(v \otimes b, w \otimes c) = bc\mf{b}_{q}(v, w)$ for $v, w \in V$ and $b, c \in L$.
This restriction homomorphism maps the class of an $n$-fold Pfister form to the class of an $n$-fold Pfister form, and thus in particular maps $I^n_q K$ to $I^n_q L$ for all $n \in \nat^+$.

When $L/K$ is a finite extension and $s : L \to K$ a non-zero $K$-linear map, then we can define a group homomorphism $s^\ast : I_q L \to I_q K$ mapping the class of a quadratic form $q$ over $L$ to class of the quadratic form $s \circ q$ over $K$.
Such a map $s^\ast$ is called a \emph{(Scharlau) transfer}.
See \cite[Section 10]{ElmanKarpenkoMerkurjev} for details.
In general, the map $s^\ast$ depends on the choice of the linear map $s$.
The following lemma will be used in the next section to isolate certain cases where a part of the map $s^\ast$ does not depend on the choice of $s$.
(Compare with the similar result \cite[Exercise 21.6]{ElmanKarpenkoMerkurjev} for bilinear forms.)
\begin{lem}\label{lem:transfer-unique}
Let $L/K$ be a finite field extension, $d > 0$ such that $I_q^{d+1} L = 0$.
For any two non-zero $K$-linear maps $s, t : L \to K$, the induced transfer maps $s^\ast$ and $t^\ast$ satisfy $s^\ast\vert_{I_q^d L} = t^\ast\vert_{I_q^d L}$.
\end{lem}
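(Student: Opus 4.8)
The plan is to exploit the fact that any two non-zero $K$-linear maps $s, t \colon L \to K$ differ by multiplication by a fixed scalar: by duality of $L$ over itself (via the non-degenerate trace-like pairing $(\lambda, \mu) \mapsto s(\lambda\mu)$, using that $s \neq 0$), there is a unique $c \in L^\times$ with $t(x) = s(cx)$ for all $x \in L$. Concretely, $t = s \circ (m_c)$ where $m_c \colon L \to L$ is multiplication by $c$. On the level of quadratic forms this means $t \circ q = s \circ (c \cdot q)$ for every quadratic form $q$ over $L$, where $c \cdot q$ denotes the scaled form $x \mapsto c\,q(x)$. Hence $t^\ast([q]) = s^\ast([\langle c\rangle \otimes q])$, where $\langle c \rangle \otimes q$ is the form $q$ scaled by $c$; equivalently, $t^\ast([q]) - s^\ast([q]) = s^\ast\big([(\langle c\rangle - \langle 1\rangle)\otimes q]\big) = s^\ast\big([\llangle c\rrangle^b_L \otimes q]\big)$ (here I use that scaling by $-1$ corresponds to a Witt-trivial operation at the level of the polar structure in the usual way, or simply compute in $I_qL$ directly via the standard identity relating $\langle c\rangle \otimes q$, $q$, and the one-fold bilinear Pfister form $\llangle c\rrangle^b_L$).

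Now restrict attention to a class $[q] \in I_q^d L$. Since $I_q^d L$ is generated by classes of $d$-fold Pfister forms, and both $s^\ast$ and $t^\ast$ are group homomorphisms, it suffices to treat the case where $q$ is itself a $d$-fold Pfister form. Then $\llangle c\rrangle^b_L \otimes q$ is a $(d+1)$-fold Pfister form (tensoring a one-fold bilinear Pfister form with a $d$-fold quadratic Pfister form yields a $(d+1)$-fold quadratic Pfister form, as recalled in the discussion of Pfister forms above), so its class lies in $I_q^{d+1} L$. By hypothesis $I_q^{d+1} L = 0$, whence $[\llangle c\rrangle^b_L \otimes q] = 0$ in $I_q L$, and therefore $t^\ast([q]) - s^\ast([q]) = s^\ast(0) = 0$. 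Extending linearly over the generators of $I_q^d L$ gives $s^\ast\vert_{I_q^d L} = t^\ast\vert_{I_q^d L}$, as desired.

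The only genuinely delicate point is the bookkeeping in characteristic $2$: one must make sure that the identity relating the scaled quadratic form $\langle c\rangle \otimes q$ to $q$ via a one-fold bilinear Pfister form is correctly formulated in $I_qL$ (rather than in the Witt ring $WL$, which is the $I^n$-filtration used when $\charac \neq 2$), and that "tensoring with $\llangle c\rrangle^b_L$" indeed raises the level in the filtration $I_q^d L \supseteq I_q^{d+1} L$. This is exactly the compatibility between the $WL$-module structure on $I_qL$ and the Pfister filtration recorded in \cite[Section 6 and Section 8]{ElmanKarpenkoMerkurjev}; once that is in place, the argument above goes through uniformly in all characteristics, and no local-global input or valuation theory is needed.
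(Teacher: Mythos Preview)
Your proof is correct and follows essentially the same approach as the paper's: both find $c \in L^\times$ with $t = s \circ m_c$ via duality, then observe that $t^\ast(\alpha) - s^\ast(\alpha) = s^\ast(c\alpha - \alpha)$ with $c\alpha - \alpha \in I_q^{d+1}L = 0$. The paper phrases the last step directly using $IL \cdot I_q^d L \subseteq I_q^{d+1}L$ rather than reducing to Pfister generators, but the argument is the same.
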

\begin{proof}
Since the $K$-linear map $L \to \Hom_K(L, K) : a \mapsto (x \mapsto s(ax))$ is injective, it must also be surjective by comparing dimensions.
Hence there exists $a \in L$ such that $t^\ast(\alpha) = s^\ast(a\alpha)$ for all $\alpha \in I_q L$.

Now we compute that for $\alpha \in I_q^d L$ arbitrary,
$$ s^\ast(\alpha) - t^\ast(\alpha) = s^\ast(\alpha) - s^\ast(a\alpha) = s^\ast(\alpha - a\alpha) = s^\ast(0) = 0 $$
where we used that $\alpha - a\alpha \in I_q^{d+1} L = \{ 0 \}$.
We conclude that indeed $t^\ast(\alpha) = s^\ast(\alpha)$ for all $\alpha \in I_q^d L$.
\end{proof}

We now consider quadratic forms over valued fields.
We wish to relate quadratic forms over a valued field to forms over its residue field.
The theory here is very well-known in the non-dyadic case, see \Cref{rem:residue-quad} below;
however, we cover both dyadic and non-dyadic valuations in a way suitable for our applications,
following a combination of the approaches of \cite{Tietze,MMW91,SpringerTameQuadratic}.

Let $(K, v)$ be a henselian valued field.
Let $q : V \to K$ be an anisotropic quadratic form defined over $K$.
Define the following subsets of $V$:
\begin{displaymath}
V_0 = \lbrace x \in V \mid v(q(x)) \geq 0 \rbrace, \qquad V_0^\circ = \lbrace x \in V \mid v(q(x)) > 0 \rbrace.
\end{displaymath}
$V_0$ and $V_0^\circ$ are $\mc{O}_v$-submodules of $V$ (by the Schwarz Inequality, see e.g.~\cite[Lemma 9]{SpringerTameQuadratic}) and $V_0/V_{0}^\circ$ is naturally a $Kv$-vector space.
We can define an anisotropic quadratic form
$$\RF{1}{q} : V_0/V_{0}^\circ \to Kv : \ovl{x} \mapsto \ovl{q(x)}$$
where $\ovl{x}$ denotes the equivalence class of a vector $x \in V_0$.
We call this the \emph{first residue form of $q$}.
Even when $q$ is itself non-singular, $\RF{1}{q}$ might be singular
(although this only occurs in characteristic $2$).

We can now define the residue homomorphism with respect to a $\zz$-valuation of a field $K$,
which is defined on the graded components of the Witt group $I_q K$, i.e.\ the groups $I_q^n K / I_q^{n+1} K$.
Slightly abusing notation, we will let maps defined on a quotient like $I_q^n K / I_q^{n+1} K$ take elements of $I_q^n K$ as input, which the reader should then interpret as taking their equivalence classes modulo $I_q^{n+1} K$ as input.
Furthermore, if $I_q^{n+1} K = 0$, we naturally identify $I_q^n K$ and $I_q^n K/I_q^{n+1} K$.
For a field $K$, denote by $K\pow{2}$ its subset of squares; this is a subfield if $\charac(K) = 2$.

\begin{prop}\label{C:residueHomomorphism}
Let $(K, v)$ be a $\zz$-valued field and $n \in \nat^+$.
If $v$ is dyadic, assume that $[Kv : (Kv)\pow{2}] < 2^{n}$.
There is a unique surjective group homomorphism
$$ \partial_v : I^{n+1}_qK/I^{n+2}_qK \to I^n_q(Kv)/I^{n+1}_q(Kv) $$
such that, if $q$ is a non-singular quadratic form over $K$ with $[q] \in I_q^{n+1}K$,
then $\partial_v[q] = [\RF{1}{q_{K_v}}] + I_q^{n+1} Kv$ if the form $q_{K_v}$ over the henselisation $K_v$ is anisotropic,
and $\partial_v[q] = 0 + I_q^{n+1} Kv$ if $[q_{K_v}] = 0$.
If additionally $q$ is an $(n+1)$-fold Pfister form, then $\partial_v[q]$ is the class of an $n$-fold Pfister form.
Furthermore, if $v$ is henselian and $I^{n+1}_q(Kv) = 0$, then $\partial_v$ is an isomorphism.
\end{prop}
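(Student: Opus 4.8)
The plan is to reduce to the henselian case and then build $\partial_v$ on the generating set of Pfister forms. Restriction along $K \to K_v$ induces a homomorphism $I_q^{n+1}K/I_q^{n+2}K \to I_q^{n+1}K_v/I_q^{n+2}K_v$, and $Kv = K_vv$, so it suffices to construct a homomorphism with the required properties for the henselian valued field $K_v$ and compose --- the formula, phrased in terms of $q_{K_v}$, is tailored to this --- while surjectivity over the original $K$ is obtained separately by an explicit lifting. So assume henceforth that $v$ is henselian, and recall that by definition $I_q^{n+1}K$ is generated by the classes of $(n+1)$-fold Pfister forms.

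I would define $\partial_v$ on generators by $\partial_v[\pi] := [\RF{1}{\pi}] + I_q^{n+1}(Kv)$ when the Pfister form $\pi$ is anisotropic, and $\partial_v[\pi] := 0$ when $[\pi] = 0$; the heart of the matter is to show that this makes sense --- $\RF{1}{\pi}$ being non-singular of even dimension with class in $I_q^n(Kv)$ --- and is consistent, i.e.\ defines a group homomorphism on $I_q^{n+1}K$ vanishing on $I_q^{n+2}K$. I would obtain this from the structure theory of quadratic forms, and of the graded Witt group, over a henselian $\zz$-valued field: Springer's theory in the non-dyadic case, where $WK_v \cong WKv \oplus WKv$ and $\partial_v$ is essentially the second residue reduced modulo $I_q^{n+1}(Kv)$, which makes the homomorphism property transparent; and, in the dyadic case, the finer analysis of \cite{Tietze,MMW91,SpringerTameQuadratic}, where the hypothesis $[Kv : (Kv)\pow{2}] < 2^n$ --- which, as $(Kv)\pow{2}$ is then a subfield, forces $I_q^{n+1}(Kv) = 0$ --- is precisely what tames the ``wild'' phenomena: an anisotropic form need not be an orthogonal sum of a unimodular form and a uniformiser times a unimodular form, and the first residue form of an $(n+1)$-fold Pfister form may drop in dimension. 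This step is what I expect to be the principal obstacle; an alternative is a presentation of $I_q^{n+1}K/I_q^{n+2}K$ by Pfister generators and relations, checking compatibility directly.

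To identify $\partial_v[\pi]$ I would normalise the $(n+1)$-fold Pfister form $\pi$ over $K_v$: \Cref{L:Pfisterbilinear} (with $S = \{v\}$) on the bilinear part; Hensel's Lemma on the special slot --- a slot $b$ with $v(b) > 0$ makes $\llangle b]]_{K_v}$, hence $\pi$, hyperbolic, since $X^2 - X - b$ then has a simple root modulo $\mathfrak{m}_v$ --- and a characteristic-dependent treatment of the special slot, rewriting $[\pi]$ modulo $I_q^{n+2}K$ as the class of an $(n+1)$-fold bilinear Pfister form (via the isomorphism $I_q K \to IK$) and reapplying \Cref{L:Pfisterbilinear} when $\charac K \neq 2$, and using $\llangle b]]_{K_v} \cong \llangle b']]_{K_v}$ whenever $b - b' \in \{c^2 - c : c \in K_v\}$ together with the hypothesis when $\charac K = 2$. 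One finds that $\RF{1}{\pi}$ is the zero form, or an $(n+1)$-fold Pfister form over $Kv$ (hence in $I_q^{n+1}(Kv)$, so $\partial_v[\pi] = 0$), or --- the substantive dyadic case --- an $n$-fold Pfister form over $Kv$; so in every case $\partial_v[\pi]$ is $0$ or the class of an $n$-fold Pfister form, which is the penultimate assertion and confirms that $\partial_v$ has image in $I_q^n(Kv)/I_q^{n+1}(Kv)$. The displayed formula for a general non-singular $q$ with $[q] \in I_q^{n+1}K$ then follows from the structure-theoretic description of $\RF{1}{q_{K_v}}$, and uniqueness is immediate, the formula pinning $\partial_v$ down on the $(n+1)$-fold Pfister generators.

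For surjectivity, an $n$-fold Pfister form $\llangle \ovl{u_1}, \ldots, \ovl{u_{n-1}}, \ovl{b}]]_{Kv}$ is the $\partial_v$-image of the $(n+1)$-fold Pfister form $\llangle u_1, \ldots, u_{n-1}, t, b]]_K$ for any uniformiser $t$ of $K$ and any lifts $u_i, b \in \mathcal{O}_v$. Finally, when $v$ is henselian with $I_q^{n+1}(Kv) = 0$, the structure theory shows $\partial_v$ is injective, hence an isomorphism onto $I_q^n(Kv) = I_q^n(Kv)/I_q^{n+1}(Kv)$; concretely, $I_q^{n+2}K = 0$, every $(n+1)$-fold Pfister form over $K$ is hyperbolic or has an anisotropic $n$-fold Pfister first residue form (it cannot have an anisotropic $(n+1)$-fold Pfister residue form, none existing over $Kv$), and these residues identify $I_q^{n+1}K$ with $I_q^n(Kv)$.
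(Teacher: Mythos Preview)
The paper's proof is not self-contained: it is a two-line citation to \cite[Corollary 4.10]{DaansLinkage}, together with the observation (from \cite[Lemma 5.4]{DaansLinkage}) that under the dyadic hypothesis the auxiliary subgroups $(I_q^{[v]}K)^{n+1}$ and $(I_q^{[v]}K)^{n+2}$ introduced there coincide with $I_q^{n+1}K$ and $I_q^{n+2}K$. Your proposal is therefore an attempt at something the paper does not itself do, namely a direct construction.

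Your outline is correct in its architecture --- reduce to the henselian case, define on Pfister generators, handle the non-dyadic case via Springer's decomposition $WK_v \cong WKv \oplus WKv$, and observe that the dyadic hypothesis $[Kv:(Kv)\pow{2}] < 2^n$ forces $I_q^{n+1}(Kv) = 0$. But you yourself flag the central step, well-definedness of $\partial_v$ as a homomorphism in the dyadic case, as ``the principal obstacle'' and do not resolve it. The gap is genuine: the assignment $[q] \mapsto [\RF{1}{q}]$ is not a priori additive on Witt classes over a dyadic henselian field, since anisotropic forms there need not decompose as a unimodular form orthogonal to a uniformiser times a unimodular form, so the Springer-style argument for additivity breaks down; and defining on Pfister generators and extending linearly requires checking compatibility with the (non-obvious) relations among Pfister classes in $I_q^{n+1}K/I_q^{n+2}K$, which you defer to an unspecified presentation. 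The route taken in \cite{DaansLinkage} is to first build the residue map on a subgroup $(I_q^{[v]}K)^{n+1} \subseteq I_q^{n+1}K$ of classes which \emph{do} admit a good decomposition --- so that additivity is transparent there --- and only afterwards to invoke the hypothesis $[Kv:(Kv)\pow{2}] < 2^n$ to show that this subgroup exhausts $I_q^{n+1}K$. Without that device or an equivalent, your argument does not close.
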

\begin{proof}
This is a special case of \cite[Corollary 4.10]{DaansLinkage}: the map $\partial_v$ defined there has as its domain $(I_q^{[v]}K)^{n+1}/(I_q^{[v]}K)^{n+2}$ where $(I_q^{[v]}K)^{n+1}$ and $(I_q^{[v]}K)^{n+2}$ are certain subgroups of $I_q^{n+1}K$ and $I_q^{n+2}K$ respectively, but we have $(I_q^{[v]}K)^{n+1} = I_q^{n+1}K$ and $(I_q^{[v]}K)^{n+2} = I_q^{n+2}K$ if $v$ is non-dyadic, or $v$ is dyadic and $[Kv : (Kv)\pow{2}] < 2^n$ \cite[Lemma 5.4]{DaansLinkage}, hence the desired statement follows.
\end{proof}
The maps $\partial_v$ described in \Cref{C:residueHomomorphism} will be called \emph{residue homomorphisms}.
They admit a very concrete description as follows.
\begin{lem}\label{P:Pfisterresidue}
  Let $(K, v)$ be a henselian $\zz$-valued field, let $n \in \nat^+$, and let $a_1, \ldots, a_n, b \in K$ with $a_1, \ldots, a_n, 1+4b \neq 0$.
  Let $q = \llangle a_1, \ldots, a_n, b ]]_K$.
  Suppose that $v(a_1) = \dotsb = v(a_{n-1}) = v(b) = v(1+4b) = 0$.
\begin{enumerate}[$(1)$]
\item\label{it:PfisterResidueUnramified} If $v(a_n)=0$,
  then $q$ is anisotropic if and only if $\llangle \overline{a_1}, \ldots, \overline{a_n}, \overline{b} ]]_{Kv}$ is anisotropic,
  and in this case $\RF{1}{q} \cong \llangle \overline{a_1}, \ldots, \overline{a_n}, \overline{b} ]]_{Kv}$.
  Therefore $\partial_v[q] \in I_q^n(Kv)/I_q^{n+1}(Kv)$ vanishes.
\item\label{it:PfisterResidueRamified} If $v(a_n)$ is odd,
  then $q$ is anisotropic if and only if $\llangle \overline{a_1}, \ldots, \overline{a_{n-1}}, \overline{b}]]_{Kv}$ is anisotropic,
  and in this case $\RF{1}{q} \cong \llangle \overline{a_1}, \ldots, \overline{a_{n-1}}, \overline{b}]]_{Kv}$.
  Therefore $\partial_v[q] \in I_q^n(Kv)/I_q^{n+1}(Kv)$ is the class of the Pfister form $\llangle \overline{a_1}, \dotsc, \overline{a_{n-1}}, \overline{b}]]_{Kv}$.
\end{enumerate}
\end{lem}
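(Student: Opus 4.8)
The plan is to compute the first residue form of $q = \llangle a_1, \ldots, a_n, b]]_K$ directly, exploiting the unit hypotheses on $a_1, \ldots, a_{n-1}, b, 1+4b$ together with henselianity, and then feed the result into \Cref{C:residueHomomorphism}. First I would recall that the quadratic Pfister form $q$ is by definition the tensor product $\llangle a_1, \ldots, a_n\rrangle^b_K \otimes \llangle b]]_K$, so $q$ has a diagonalisation of the shape $\bigotimes_{i=1}^n \langle 1, -a_i\rangle_b$ scaled onto the binary quadratic form $\llangle b]]_K$. Writing $\rho = \llangle a_1, \ldots, a_{n-1}, b]]_K$, which by hypothesis has all its ``slots'' a $v$-unit, one sees $q \cong \rho \perp (-a_n)\rho$ (tensoring with $\langle 1, -a_n\rangle_b$). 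This is the form to which I apply the residue computation: the two cases of the lemma are exactly the cases $v(a_n)$ even and $v(a_n)$ odd, and in the even case, after multiplying $a_n$ by a square we may assume $v(a_n) = 0$.

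The heart of the matter is the following two claims, each proved using that $v$ is henselian and that $\rho$ has unit entries. \emph{Case \eqref{it:PfisterResidueUnramified}, $v(a_n) = 0$:} Then $q \cong \rho \perp (-a_n)\rho$ is a sum of two unimodular forms whose reductions are $\llangle \overline{a_1}, \ldots, \overline{a_{n-1}}, \overline{b}]]_{Kv}$ and $(-\overline{a_n})\llangle \overline{a_1}, \ldots, \overline{a_{n-1}}, \overline{b}]]_{Kv}$; these together constitute $\llangle \overline{a_1}, \ldots, \overline{a_n}, \overline{b}]]_{Kv}$, which is again a Pfister form, hence anisotropic or hyperbolic. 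One checks, using the characterisation of isometry of anisotropic unimodular forms via their first residue forms (quoted after the definition of unimodularity in the excerpt) together with Springer-type lifting in the henselian setting, that $q$ is anisotropic over $K$ iff its candidate residue form is anisotropic over $Kv$, and then $\RF{1}{q}$ is isometric to it; since this form lies in $I_q^n(Kv)$, the image $\partial_v[q]$ in the quotient $I_q^n(Kv)/I_q^{n+1}(Kv)$ is the class of an $n$-fold Pfister form in $I_q^n(Kv) = I_q^{n+1}(Kv) + (\text{that class})$ — but being an $(n)$-fold Pfister class it is already reduced, and in fact since $\overline{a_n}$ appears as a genuine slot it is hyperbolic exactly when the $(n{-}1)$-fold part is, so after passing to the quotient by $I_q^{n+1}(Kv)$ it vanishes. \emph{Case \eqref{it:PfisterResidueRamified}, $v(a_n)$ odd:} here after scaling $a_n$ by a square we may take $v(a_n) = 1$, so $\rho$ and $(-a_n)\rho$ sit in different cosets of $vK/2vK$; the form $q \cong \rho \perp (-a_n)\rho$ is then, by the Springer decomposition for henselian $\zz$-valuations, anisotropic iff both residue pieces are, i.e.\ iff $\llangle \overline{a_1}, \ldots, \overline{a_{n-1}}, \overline{b}]]_{Kv}$ is anisotropic, and in that case $V_0/V_0^\circ$ is carried entirely by the $\rho$-part, so $\RF{1}{q} \cong \llangle \overline{a_1}, \ldots, \overline{a_{n-1}}, \overline{b}]]_{Kv}$. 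Plugging into the defining property of $\partial_v$ from \Cref{C:residueHomomorphism} gives $\partial_v[q] = [\RF{1}{q}] + I_q^{n+1}(Kv) = [\llangle \overline{a_1}, \ldots, \overline{a_{n-1}}, \overline{b}]]_{Kv}] + I_q^{n+1}(Kv)$, as claimed.

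The main obstacle I anticipate is the characteristic-$2$ bookkeeping in the Springer-type decomposition: in the dyadic case the first residue form of a quadratic form need not be non-singular, and one must be careful that the pieces $\rho$ and $(-a_n)\rho$ are genuinely unimodular (which is where the hypotheses $v(b) = v(1+4b) = 0$ are used — they guarantee $\llangle b]]_{Kv}$ is a well-defined non-singular binary form) and that the decomposition $q \cong \rho \perp (-a_n)\rho$ respects the $\mc{O}_v$-lattice structure so that $V_0/V_0^\circ$ splits accordingly. I would handle this by invoking the results of \cite{Tietze} (Satz 3.1 \& 4.1, cited above) on anisotropic unimodular forms over henselian valued fields, which are stated without characteristic restriction, rather than reproving a Springer decomposition from scratch; the even/odd dichotomy on $v(a_n)$ then matches the two ``residue components'' of a general anisotropic form with respect to a $\zz$-valuation, and the Pfister-form shape is preserved because tensoring a unit-entry Pfister form by $\langle 1, -a_n\rangle_b$ commutes with reduction. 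Everything else is a routine diagonalisation computation.
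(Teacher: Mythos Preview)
The paper does not prove this lemma at all: its entire proof is a one-line citation to \cite[Proposition 4.8]{DaansLinkage}. Your direct computation via the decomposition $q \cong \rho \perp (-a_n)\rho$ with $\rho = \llangle a_1, \dotsc, a_{n-1}, b]]_K$ unimodular is the standard route and is correct in outline; it is essentially what the cited reference does, so there is nothing genuinely different here, just that you are unpacking what the paper outsources.

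There is, however, one passage that is garbled and should be fixed. In case \eqref{it:PfisterResidueUnramified} you correctly identify $\RF{1}{q} \cong \llangle \overline{a_1}, \dotsc, \overline{a_n}, \overline{b}]]_{Kv}$, but your justification for $\partial_v[q] = 0$ (``since this form lies in $I_q^n(Kv)$ \dots\ being an $n$-fold Pfister class it is already reduced, and in fact since $\overline{a_n}$ appears as a genuine slot it is hyperbolic exactly when the $(n{-}1)$-fold part is'') is confused. The residue form here is an $(n{+}1)$-fold Pfister form, not an $n$-fold one, so its class lies in $I_q^{n+1}(Kv)$; hence $\partial_v[q] = [\RF{1}{q}] + I_q^{n+1}(Kv) = 0$ in $I_q^n(Kv)/I_q^{n+1}(Kv)$ immediately. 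No further argument about hyperbolicity or ``genuine slots'' is needed. Apart from this, your handling of case \eqref{it:PfisterResidueRamified} via the value-parity argument (values of the unimodular form $\rho$ are even, values of $a_n\rho$ are odd, so $V_0/V_0^\circ$ sees only the $\rho$-summand) is exactly right, and your remark that the hypotheses $v(b) = v(1+4b) = 0$ are precisely what is needed to make $\llangle b]]$ reduce to a non-singular binary form --- hence $\rho$ unimodular --- is the key point in characteristic $2$.
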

\begin{proof}
This is a special case of \cite[Proposition 4.8]{DaansLinkage}.
\end{proof}

\Cref{P:Pfisterresidue} completely describes the behaviour of $\partial_v$,
since in the relevant situation every Pfister form has a presentation of the required shape.
We observe this in the following strong form.
\begin{lem}\label{P:Pfister-semilocal-presentation}
Let $K$ be a field, $n \in \nat^+$, and let $S$ be a finite set of $\zz$-valuations on $K$.
Let $q$ be an $(n+1)$-fold Pfister form and assume that for all dyadic $v \in S$ we have $[Kv : (Kv)\pow{2}] < 2^{n}$.
Then there exist $a_1, \ldots, a_{n}, b \in K$ with $v(a_1) = \ldots = v(a_{n-1}) = v(b) = v(1+4b) = 0$ and $v(a_n) \in \{ 0, 1 \}$ for all $v \in S$ and such that $q \cong \llangle a_1, \ldots, a_n, b]]_K$.
\end{lem}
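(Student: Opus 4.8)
The plan is to dispatch the scalars $a_1,\dots,a_{n-1}$ with \Cref{L:Pfisterbilinear} and then to invest the remaining effort into choosing a good representative for the slot $b$. First write $q\cong\llangle c_1,\dots,c_n,b_0]]_K$ for suitable $c_i\in K^\times$ and $b_0\in K$ with $1+4b_0\neq 0$, so that $q=\llangle c_1,\dots,c_n\rrangle_K^b\otimes\llangle b_0]]_K$. Applying \Cref{L:Pfisterbilinear} to the $n$-fold bilinear Pfister form $\llangle c_1,\dots,c_n\rrangle_K^b$ and the set $S$ produces $a_1,\dots,a_n\in K^\times$ with $\llangle c_1,\dots,c_n\rrangle_K^b\cong\llangle a_1,\dots,a_n\rrangle_K^b$ and $a_1,\dots,a_{n-1}\in\bigcap_{v\in S}\mathcal{O}_v^\times$; hence $q\cong\llangle a_1,\dots,a_n,b_0]]_K$, with the first $n-1$ scalars already of the required shape. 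It then remains to replace $b_0$ by some $b$ with $v(b)=v(1+4b)=0$ for all $v\in S$, possibly also modifying $a_n$, while keeping the isometry type of $q$. Here one notes that for a dyadic $v$ the condition $v(1+4b)=0$ follows automatically from $v(b)=0$ (since $v(4b)>0$), whereas for a non-dyadic $v$ it only asks $\ovl{b}\neq-\tfrac14$ in $Kv$.

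The next step is to pin down the freedom in the slot $b$: the form $\llangle a_1,\dots,a_n,b]]_K$ is isometric to $q$ exactly when a certain $(n+1)$-fold Pfister form built from $b$ and $b_0$ vanishes in $I_q^{n+1}K$. In characteristic $\neq 2$ this is concrete, since $\llangle a_1,\dots,a_n,b]]_K\cong\llangle a_1,\dots,a_n\rrangle_K^b\otimes\langle 1,-(1+4b)\rangle$, so that $\llangle a_1,\dots,a_n,b]]_K\cong\llangle a_1,\dots,a_n,b_0]]_K$ if and only if $\tfrac{1+4b}{1+4b_0}$ lies in the value group $D_K\bigl(\llangle a_1,\dots,a_n\rrangle_K^b\bigr)$ of the $n$-fold bilinear Pfister form; in characteristic $2$ there is a parallel description, which I would take from the analysis of quadratic Pfister forms over valued fields in \cite{DaansLinkage} (the same input underlying \Cref{C:residueHomomorphism} and \Cref{P:Pfisterresidue}). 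Thus it suffices to find $d\in D_K\bigl(\llangle a_1,\dots,a_n\rrangle_K^b\bigr)$ with $v\bigl((1+4b_0)d\bigr)=0$ and $v\bigl(\tfrac{(1+4b_0)d-1}{4}\bigr)=0$ for every $v\in S$, and then recover $b$ from $1+4b=(1+4b_0)d$ (with the obvious modification in characteristic $2$).

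To produce $d$ I would argue place by place and then glue. An element of $D_K\bigl(\llangle a_1,\dots,a_n\rrangle_K^b\bigr)$ is simply a value of that form at a $K$-point, and such a point can be approximated at all places of $S$ simultaneously by the Weak Approximation Theorem; since the two conditions on $d$ are open at each $v$, it is enough to exhibit, for each $v\in S$, an element $d_v\in D_{K_v}\bigl(\llangle a_1,\dots,a_n\rrangle_{K_v}^b\bigr)$ realising them over the henselisation $K_v$. For non-dyadic $v$ this is an elementary manipulation using Springer's decomposition over $K_v$ and the residue forms together with the fact that $a_1,\dots,a_{n-1}$ are $v$-units — a small case distinction being needed only when $Kv$ has very few elements. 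For dyadic $v$ the hypothesis $[Kv:(Kv)\pow{2}]<2^n$ is precisely what makes the local computation go through: it forces every $n$-fold bilinear Pfister form over $Kv$ to be isotropic, because its $2^n$ diagonal entries cannot be linearly independent over the subfield $(Kv)\pow{2}$, and this — combined with the structure theory of \cite{DaansLinkage} — leaves enough room in the presentation of $q$ over $K_v$ to arrange $v(b)=0$. I expect the main obstacle to be exactly this local step at dyadic places, and the requirement to carry it out uniformly in the characteristic: in characteristic $2$ the convenient reduction to bilinear Pfister forms is unavailable, so one must work directly with the behaviour of quadratic Pfister forms over $(K_v,v)$ as in \cite{DaansLinkage}, possibly also re-choosing the bilinear Pfister factor $\llangle a_1,\dots,a_n\rrangle_K^b$ of $q$ (and $b_0$ along with it) in order to land in the right coset.
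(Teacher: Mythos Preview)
Your approach reverses the paper's order of operations: the paper first finds a good $b$ (with $v(b)=v(1+4b)=0$ for all $v\in S$) by invoking \cite[Proposition 3.9, Proposition 4.12, Corollary 4.15]{Andromeda-1}, and only then applies \Cref{L:Pfisterbilinear} to the resulting bilinear factor to arrange $a_1,\dots,a_{n-1}\in\bigcap_{v\in S}\mathcal{O}_v^\times$. Since \Cref{L:Pfisterbilinear} replaces the bilinear Pfister form by an isometric one, the slot $b$ is untouched and the proof is two lines.

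Your order runs into a real obstacle. Once the bilinear factor $\pi=\llangle a_1,\dots,a_n\rrangle_K^b$ is fixed, the condition $\llangle a_1,\dots,a_n,b]]_K\cong\llangle a_1,\dots,a_n,b_0]]_K$ (in characteristic $\neq 2$) is indeed $(1+4b)/(1+4b_0)\in D_K(\pi)$, but there is no guarantee that $D_{K_v}(\pi)$ contains elements of the required valuation. Concretely: take $K=\mathbb{R}(t)$, $S=\{v\}$ with $v$ the $t$-adic place, $n=2$, and $q=\llangle -1,-1,\tfrac{t-1}{4}]]_K$, so $1+4b_0=t$. Here $a_1=a_2=-1$ are already $v$-units, so \Cref{L:Pfisterbilinear} changes nothing; but $\pi=\llangle -1,-1\rrangle_K^b\cong\langle 1,1,1,1\rangle$ is anisotropic over $K_v$ with $D_{K_v}(\pi)=(K_v^\times)^2$, so every candidate $d_v$ has even valuation and cannot cancel the odd $v(1+4b_0)$. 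A correct presentation does exist --- e.g.\ $q\cong\llangle -1,t,-1]]_K$ --- but it forces a change of $a_2$, and then $\llangle -1,t\rrangle_K^b\not\cong\pi$, so your value-group analysis no longer applies.

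You anticipate this (``possibly also modifying $a_n$'', ``re-choosing the bilinear Pfister factor''), but once $a_n$ is allowed to move the task becomes: given units $a_1,\dots,a_{n-1}$, find $a_n'$ and $b$ with $v(b)=v(1+4b)=0$ and $q\cong\llangle a_1,\dots,a_{n-1},a_n',b]]_K$. That is essentially the content of the \cite{Andromeda-1} results the paper cites, so fleshing out your step would amount to reproving them (including the dyadic case, where your sketch is thinnest). The paper's order sidesteps the difficulty entirely by settling $b$ first and then letting \Cref{L:Pfisterbilinear} clean up the bilinear part without disturbing $b$.
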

\begin{proof}
By \cite[Corollary~4.16]{Andromeda-1} we may find $b \in K$ with $v(b) = v(1+4b) = 0$ for all $v \in S$ such that $q \cong \llangle a_1', \ldots, a_n', b]]_K$ for certain $a_1', \ldots, a_n' \in K^\times$.
By \Cref{L:Pfisterbilinear} we may find $a_1, \ldots, a_n \in K^\times$ with $v(a_1) = \ldots, v(a_{n-1}) = 0$ for all $v \in S$ and such that $\llangle a_1, \ldots, a_n \rrangle_K^b \cong \llangle a_1', \ldots, a_n' \rrangle_K^b$.
By Weak Approximation, we may find $x \in K^\times$ such that $v(x) = -\lfloor v(a_n)/2 \rfloor$ for all $v \in S$.
We may replace $a_n$ by $a_n x^2$ without affecting the isometry class of $\llangle a_1, \ldots, a_n \rrangle_K^b$, and thus assume without loss of generality that $v(a_n) \in \{ 0, 1 \}$.
It follows that $q \cong \llangle a_1', \ldots, a_n', b]]_K \cong \llangle a_1, \ldots, a_n, b]]_K$ and this presentation is as desired.
\end{proof}

\begin{opm}\label{rem:residue-quad}
In this remark we consider the situation of non-dyadic $\zz$-valued fields.
The notion of residue homomorphism is well-known in this case.
See for example \cite[Section 19.A]{ElmanKarpenkoMerkurjev} for a discussion of residue homomorphisms defined on symmetric bilinear forms,
and recall from \Cref{sec:QF-preliminaries} that for fields of characteristic different from $2$, there is a natural correspondence between quadratic and symmetric bilinear forms.
The presentation there, defining residue homomorphisms through an explicit presentation of the Witt group in terms of generators and relations,
is quite different from the abstract one chosen here.
It is clear from \Cref{P:Pfisterresidue} and \cite[Example 19.13]{ElmanKarpenkoMerkurjev}, however,
that for a Pfister form $q$,
taking the class in the Witt group of $Kv$ of $\RF{1}{q}$ in our notation corresponds to applying
the first residue homomorphism $\partial$ to the class of $q$ in the notation there.
There is also a second residue homomorphism with respect to a choice of uniformiser $\pi$,
which corresponds to using the residue form $\RF{\pi}{q}$ instead of $\RF{1}{q}$ in the notation of \cite{DaansLinkage}.
For the definition of the map $\partial_v \colon I^{n+1}_qK/I^{n+2}_qK \to I^n_q(Kv)/I^{n+1}_q(Kv)$,
all the residue homomorphisms yield the same result
(see for instance \cite[Satz 3.1]{Arason_CohomInvarianten} and its proof).
\end{opm}

\begin{opm}\label{rem:residue-K-cohom}
There are also related residue homomorphism in Milnor $K$-theory (see for instance \cite[Section 100.B]{ElmanKarpenkoMerkurjev})
and in Galois cohomology \cite[Construction 6.8.5]{GilleSzamuely_2nd}.
For non-dyadic $\zz$-valued fields, all of these presentations are compatible with one another
(and with the one of \Cref{C:residueHomomorphism}) in view of the natural isomorphisms
\[ K^M_n(K)/2K^M_n(K) \cong I_q^n(K)/I_q^{n+1}(K) \cong H^n(K, \zz/2\zz) \]
for an arbitrary field $K$ of characteristic not $2$ conjectured by Milnor
(now theorems of Voevodsky and Orlov--Vishik--Voevodsky, see \cite[Section 16]{ElmanKarpenkoMerkurjev}),
see eg.~\cite[Proposition 7.5.1]{GilleSzamuely_2nd} for the compatibility.
\end{opm}

For use in the proof of \Cref{lem:residue-transfer-commute} in the next section,
we mention two more technical results on computing with residue homomorphisms and transfers.
It is convenient to make use of quadratic and bilinear forms over discrete valuation rings (as opposed to over fields).
All basic concepts of quadratic and bilinear forms over fields carry over to this context, see e.g.~\cite[Section 19]{ElmanKarpenkoMerkurjev} for a brief discussion.
In the following $\mathcal{O}_v$ is a discrete valuation ring corresponding to a $\mathbb{Z}$-valuation $v$ on the fraction field $K$ of $\mathcal{O}_v$.
Given a finite-dimensional free $\mc{O}_v$-module $\mathfrak{V}$, a quadratic form on $\mathfrak{V}$ is a map $q : \mathfrak{V} \to \mc{O}_v$ such that $q(ax) = a^2q(x)$ for all $a \in \mc{O}_v$ and $x \in \mathfrak{V}$, and for which the associated map
$$ \mf{b}_q : \mathfrak{V} \times \mathfrak{V} \to \mc{O}_v : (x, y) \mapsto q(x+y) - q(x) - q(y) $$
is bilinear.
A quadratic form $q : \mathfrak{V} \to \mc{O}_v$ over $\mathcal{O}_v$ is said to be \emph{non-singular} if the linear map $\mathfrak{V} \to \Hom_{\mc{O}_v}(\mathfrak{V}, \mc{O}_v) : v \mapsto (w \mapsto b_q(v, w))$ is an isomorphism.
It is \emph{hyperbolic} if it is non-singular and there is a free submodule $\mathfrak{W} \subseteq \mathfrak{V}$ such that $2\dim(\mathfrak{W}) = \dim(\mathfrak{V})$ and $q\vert_{\mathfrak{W}}$ is identically zero.
Furthermore, we denote by $q_K$ its scalar extension to $K$, which is a quadratic form on $\mathfrak{V} \otimes_{\mc{O}_v} K$, defined similarly to scalar extensions over fields.
By $\ovl{q}$ we denote the form over $Kv$ given by $\mathfrak{V}/\mf{m}_v\mathfrak{V} \to Kv : x + \mf{m}_v\mathfrak{V} \mapsto \ovl{q(x)}$.

In particular, if $V$ is a finite-dimensional $K$-vector space on an arbitrary field $K$, $q : V \to K$ a quadratic form, $v$ a $\zz$-valuation on $K$ and $\mathfrak{V}$ a free $\mc{O}_v$-submodule of $V$ such that $q(\mathfrak{V}) \subseteq \mc{O}_v$, then $q\vert_{\mathfrak{V}}$ is a quadratic form over $\mc{O}_v$, and if additionally $\lbrace bv \mid b \in K, v \in \mathfrak{V} \rbrace = V$, then $(q\vert_{\mathfrak{V}})_K \cong q$.

Let $(K, v)$ be a henselian $\mathbb{Z}$-valued field.
A quadratic form over $K$ which is of the form $q_K$ for some non-singular quadratic form $q : \mathfrak{V} \to \mc{O}_v$ over $\mc{O}_v$ is called \emph{unimodular}.
If $q$ is additionally anisotropic, then one sees that $(\mathfrak{V} \otimes_{\mc{O}_v} K)_0 = \mathfrak{V}$ and $(\mathfrak{V} \otimes_{\mc{O}_v} K)_0^\circ = \mf{m}_v\mathfrak{V}$, whereby $\RF{1}{q_K} \cong \overline{q}$, which is also non-singular.
Conversely, if $q : V \to K$ is an anisotropic quadratic form such that $\RF{1}{q}$ is non-singular and $\dim(q) = \dim(\RF{1}{q})$, then $\mathfrak{V} = V_0 = \lbrace x \in V \mid v(q(x)) \geq 0 \rbrace$ is a free $\mc{O}_v$-submodule of $V$ \cite[Proposition 5(ii)]{MMW91}, and furthermore $\mf{m}_v \mathfrak{V} = V_0^\circ$.
One sees that $q\vert_{\mathfrak{V}}$ is a non-singular quadratic form over $\mc{O}_v$ and $q \cong (q\vert_{\mathfrak{V}})_K$.
We conclude that an anisotropic form $q$ over $K$ is unimodular if and only if $\RF{1}{q}$ is non-singular and $\dim(q) = \dim(\RF{1}{q})$.

If $q_1$ and $q_2$ are both anisotropic unimodular forms over $K$, then they are isometric if and only if their first residue forms $\RF{1}{q_1}$ and $\RF{1}{q_2}$ are isometric.
Furthermore, the classes of even-dimensional unimodular forms naturally form a subgroup of $I_q K$, and if $q_1$, $q_2$, and $q_3$ are anisotropic unimodular forms such that $[q_1] + [q_2] = [q_3]$, then $[\RF{1}{q_1}] + [\RF{1}{q_2}] = [\RF{1}{q_3}]$.
See e.g.~\cite[Satz 3.1 \& 4.1]{Tietze} for more general statements and their proofs.

The following is an addendum to the computation rules for residue forms developed earlier.

\begin{lem}\label{C:Pfisterresidue}
Let $(K, v)$ be a henselian $\zz$-valued field with uniformiser $\pi$.
Let $n \in \nat^+$, assume that $I_q^{n+1}(Kv) = 0$, and if $v$ is dyadic, assume that $[Kv : (Kv)\pow{2}] < 2^n$.
Then every class of $I_q^{n+1} K$ is of the form $[q'] - [\pi q']$ for an anisotropic unimodular form $q'$, which is unique up to isometry. Furthermore, we then have $\partial_v([q'] - [\pi q']) = [\RF{1}{q'}]$.
\end{lem}
\begin{proof}
We first establish the uniqueness: suppose that $[q'] - [\pi q'] = [q''] - [\pi q'']$ for anisotropic unimodular forms $q'$ and $q''$.
Then $[q'] - [q'']$ is the class of an anisotropic unimodular form $\tilde{q}$ which satisfies $[\tilde{q}] = [q'] - [q''] = [\pi q'] - [\pi q''] = [\pi \tilde{q}]$.
Since an anisotropic unimodular form cannot represent a uniformiser, $[\tilde{q}] = [\pi \tilde{q}]$ forces $[\tilde{q}] = 0$, implying that $[q'] = [q'']$ and hence $q' \cong q''$.

For the existence statement, first consider an $(n+1)$-fold Pfister form $q$ over $K$ such that $[q] \neq 0$ (equivalently, by \Cref{C:residueHomomorphism}, $\partial_v[q] \neq 0$).
By \Cref{P:Pfister-semilocal-presentation} and \Cref{P:Pfisterresidue} we have $q \cong \llangle a_1, \ldots, a_n, b]]_K$ for some $a_1, \ldots, a_n, b \in K$ with $v(a_1) = \ldots = v(a_{n-1}) = v(b) = v(1+4b) = 0$ and $v(a_n) = 1$.
Write $q' = \llangle a_1, \ldots, a_{n-1}, b]]_K$
and observe that
$$ \partial_v([q'] - [a_n\pi^{-1}q']) = \partial_v(\llangle a_1, \ldots, a_{n-1}, a_n\pi^{-1}, b]]_K) = 0 $$
by \Cref{P:Pfisterresidue}, so that $[q'] = [a_n \pi^{-1} q']$ in view of \Cref{C:residueHomomorphism}, and consequently $[a_n q'] = [\pi q']$.
We obtain that $[q] = [q'] - [a_nq'] = [q'] - [\pi q'] = [\llangle a_1, \ldots, a_{n-1}, \pi, b ]]_K]$.
Using \Cref{P:Pfisterresidue} we now compute that
$$ \partial_v([q'] - [\pi q']) = \partial_v([\llangle a_1, \ldots, a_{n-1}, \pi, b ]]_K]) = [\llangle \overline{a_1}, \ldots, \overline{a_{n-1}}, \overline b ]]_K] = [\RF{1}{q'}]$$
as desired.

By definition of $I_q^{n+1} K$, every class of $I_q^{n+1} K$ is a finite sum of classes of $(n+1)$-fold Pfister forms.
The desired statement thus follows in general, since sums of classes of unimodular forms are again classes of unimodular forms, $\partial_v$ is a group homomorphism, and $[\RF{1}{q_1}] + [\RF{1}{q_2}] = [\RF{1}{q_3}]$ for anisotropic unimodular forms $q_1, q_2, q_3$ with $[q_1] + [q_2] = [q_3]$.
\end{proof}

In the non-dyadic case, a related version of the following statement
(which would be sufficient for our purposes)
is observed in \cite[Satz 2.5]{Arason_CohomInvarianten}.
\begin{lem}\label{L:transfer-unimodular}
Let $(K, v)$ be a henselian $\zz$-valued field, $(L,w)/(K,v)$ a finite extension of valued fields.
Consider a $K$-linear map $s \colon L \to K$ with $s(\mc{O}_w) \subseteq \mc{O}_v$ and $s(\mf{m}_w) \subseteq \mf{m}_v$.
Let $\ovl{s}$ be the induced $Kv$-linear map $Lw \to Kv : \ovl{x} \mapsto \ovl{s(x)}$.
Assume that $\ovl{s}$ is not the zero map.
Let $q$ be an anisotropic, unimodular, even-dimensional quadratic form over $L$.
Denote by $q'$ the unique anisotropic quadratic form over $K$ such that $[q'] = s^\ast([q])$.

Then $q'$ is unimodular, and $\ovl{s}^\ast([\RF{1}{q}]) = [\RF{1}{q'}]$.
\end{lem}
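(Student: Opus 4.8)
The plan is to carry out the transfer at the level of the valuation rings and then reduce modulo the maximal ideals, so that the transferred residue form appears directly. Since $q$ is anisotropic and unimodular over the henselian field $(L,w)$, the discussion of unimodular forms preceding \Cref{C:residueHomomorphism} provides a basis $e_1,\dots,e_m$ of the underlying space $V$ of $q$ such that $M:=\bigoplus_i\mc{O}_w e_i$ satisfies $q|_M$ taking values in $\mc{O}_w$ and reducing modulo $\mf{m}_w$ to the non-singular form $\RF{1}{q}$ on $M/\mf{m}_w M$, of dimension $m=\dim q$. I would also record two preliminary facts: since $\ovl{s}\ne 0$, the $\mc{O}_v$-submodule $s(\mc{O}_w)\subseteq\mc{O}_v$ is not contained in $\mf{m}_v$ and hence equals $\mc{O}_v$; and the transfer $\ovl{s}^{\,\ast}$ preserves non-singularity, since the polar form of $\ovl{s}\circ\varphi$ is $\ovl{s}\circ\mf{b}_{\varphi}$, so a vector in its radical not lying in the radical of $\mf{b}_{\varphi}$ would, on composing $\ovl{s}$ with multiplication by a nonzero value of $\mf{b}_{\varphi}$, contradict $\ovl{s}\ne 0$ (compare \cite[Exercise 21.6]{ElmanKarpenkoMerkurjev}).

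Next I would transfer the lattice $M$. Choosing $\omega_1,\dots,\omega_f\in\mc{O}_w$ whose residues form a $Kv$-basis of $Lw$, one checks that $\Lambda:=\bigoplus_{i,b}\mc{O}_v\,(e_i\omega_b)$ is a full $\mc{O}_v$-lattice in $V$, now regarded as a $K$-vector space; this is the point at which the precise hypotheses on the extension $(L,w)/(K,v)$ enter. For $x=\sum_{i,b}x_{ib}e_i\omega_b\in\Lambda$, writing $y_i=\sum_b x_{ib}\omega_b\in\mc{O}_w$ gives $(s\circ q)(x)=s\bigl(q(\sum_i y_ie_i)\bigr)\in s(\mc{O}_w)=\mc{O}_v$, so $s\circ q$ is $\mc{O}_v$-integral on $\Lambda$; and reducing modulo $\mf{m}_v$ yields $\ovl{(s\circ q)(x)}=\ovl{s}\bigl(\RF{1}{q}(\sum_i\ovl{y_i}\,\ovl{e_i})\bigr)$, where $\ovl{y_i}=\sum_b\ovl{x_{ib}}\,\ovl{\omega_b}$ runs over all of $(Lw)^m$ as the $\ovl{x_{ib}}$ run over $Kv$. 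Identifying $\Lambda/\mf{m}_v\Lambda$ with $(Lw)^m$ as a $Kv$-vector space via the $\ovl{\omega_b}$, the reduction of $s\circ q$ on $\Lambda/\mf{m}_v\Lambda$ is therefore exactly $\ovl{s}\circ\RF{1}{q}=\ovl{s}^{\,\ast}(\RF{1}{q})$. By the preliminary facts this reduced form is non-singular of dimension $\dim_K(s\circ q)$, so $s\circ q$ is itself unimodular over $(K,v)$, with $\RF{1}{s\circ q}\cong\ovl{s}^{\,\ast}(\RF{1}{q})$.

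Finally I would pass to anisotropic parts. A unimodular form over a henselian field splits off hyperbolic planes in accordance with the Witt decomposition of its first residue form (this follows from the theory behind \Cref{C:residueHomomorphism}; see \cite[Satz 3.1 \& 4.1]{Tietze}), so $s\circ q\cong\psi\perp\varphi_0$ with $\varphi_0$ hyperbolic, $\psi$ anisotropic and unimodular, and $\RF{1}{\psi}\cong(\RF{1}{s\circ q})_{\mathrm{an}}$. As $\psi$ is anisotropic and $[\psi]=[s\circ q]=s^{\ast}([q])$, it is precisely the form $q'$ from the statement, whence $q'$ is unimodular; and in $I_q(Kv)$ we read off $[\RF{1}{q'}]=[(\RF{1}{s\circ q})_{\mathrm{an}}]=[\RF{1}{s\circ q}]=[\ovl{s}^{\,\ast}(\RF{1}{q})]=\ovl{s}^{\,\ast}([\RF{1}{q}])$, which is the assertion.

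The step I expect to be the main obstacle is the construction and analysis of $\Lambda$ in the second paragraph: one must ensure that $\Lambda$ is a full $\mc{O}_v$-lattice on which $s\circ q$ is integral and whose reduction has full rank equal to $\ovl{s}^{\,\ast}(\RF{1}{q})$ — equivalently, that the transfer loses no rank upon passage to the residue field. This is exactly where the hypotheses on $s$ and on the extension $(L,w)/(K,v)$ must be exploited; the two outer steps are then formal consequences of the henselian theory of unimodular forms recalled above.
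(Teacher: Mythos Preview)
Your approach is essentially the paper's: carry out the transfer integrally, identify the reduction of $s\circ q$ with $\ovl{s}\circ\RF{1}{q}$, then pass to anisotropic parts. The paper works directly with the $\mc{O}_w$-lattice $V_0=\{x\in V\mid w(q(x))\ge 0\}$ viewed as a free $\mc{O}_v$-module, whereas you build $\Lambda$ explicitly from lifts $\omega_1,\dots,\omega_f$ of a $Kv$-basis of $Lw$; when $e(w/v)=1$ the $\omega_b$ form an $\mc{O}_v$-basis of $\mc{O}_w$ by Nakayama and your $\Lambda$ coincides with $V_0$, so the two arguments are then literally the same.

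The concern you flag in your last paragraph is genuine and cannot be resolved from the stated hypotheses. Your $\Lambda$ has $\mc{O}_v$-rank $m\cdot f(w/v)$, while $\dim_K(s\circ q)=m\cdot[L:K]$, so $\Lambda$ is a full lattice only when $e(w/v)=1$; there is no ``precise hypothesis on the extension'' in the lemma to appeal to. In fact the conclusion is false for ramified extensions: with $p$ odd, $K=\qq_p$, $L=K(\sqrt{p})$, $u\in\mc{O}_K^\times$ a non-square, $q(X,Y)=X^2-uY^2$ and $s(a+b\sqrt{p})=a$, the form $q$ is anisotropic unimodular over $L$, but $q'=s^\ast q$ is the anisotropic four-dimensional form $X_1^2-uY_1^2+pX_2^2-upY_2^2$, whose first residue form has dimension $2$, so $q'$ is not unimodular. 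The paper's own proof has the same tacit restriction: the step ``it follows that $s\circ q|_{V_0}$, as a quadratic form over $\mc{O}_v$, is non-singular'' uses that the reduction of $V_0$ modulo $\mf{m}_v$ coincides with $V_0/V_0^\circ=V_0/\mf{m}_wV_0$, i.e.\ that $\mf{m}_v\mc{O}_w=\mf{m}_w$. This is harmless in context, since the lemma is invoked only in Case~1 of the proof of \Cref{lem:residue-transfer-commute}, where $e(w/v)=1$ is assumed. So your argument is correct in precisely the cases the paper's is, and the missing hypothesis you were searching for is unramifiedness.
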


\begin{proof}
Write $q : V \to L$ for some $L$-vector space $V$, and as before, set $\mathfrak{V} = V_0 = \lbrace x \in V \mid w(q(x)) \geq 0 \rbrace$.
As before, since $q$ is anisotropic, this is an $\mc{O}_w$-submodule of $V$, and since $q$ is unimodular, $\mathfrak{V}$ is a free $\mc{O}_w$-module.
Further recall that $\mf{m}_w \mathfrak{V} = V_0^\circ = \lbrace x \in V \mid w(q(x)) > 0 \rbrace$, and for $x \in \mathfrak{V}$, we denote by $\ovl{x}$ its equivalence class in $\mathfrak{V}/\mf{m}_w \mathfrak{V}$.

We compute that, for $x \in \mathfrak{V}$, $(\ovl{s} \circ \RF{1}{q})(\ovl{x}) = \ovl{s}(\RF{1}{q}(\ovl{x})) = \ovl{s}(\ovl{q(x)}) = \ovl{s(q(x))} = \ovl{s \circ q\vert_{\mathfrak{V}}}(\ovl{x})$.
As $q$ is anisotropic unimodular, $\RF{1}{q}$ is non-singular, and then by \cite[Lemma 20.4]{ElmanKarpenkoMerkurjev} we have that $\ovl{s} \circ \RF{1}{q}$ is non-singular.
It follows that also $s \circ q\vert_{\mathfrak{V}}$, as a quadratic form over $\mc{O}_v$, is non-singular, and $\overline{s \circ q\vert_{\mathfrak V}} \cong \overline{s} \circ \RF{1}{q}$.

By \cite[Corollary 19.3]{ElmanKarpenkoMerkurjev} we may find free $\mc{O}_v$-submodules $\mathfrak{W}, \mathfrak{W}'$ of $\mathfrak{V}$ such that $\mathfrak{V} = \mathfrak{W} \oplus \mathfrak{W}'$, $\mathfrak{W}$ and $\mathfrak{W}'$ are orthogonal with respect to $s \circ q\vert_{\mathfrak V}$, $s \circ q\vert_{\mathfrak{W}}$ (or, equivalently, $(s \circ q\vert_{\mathfrak{W}})_K$) is anisotropic, and $s \circ q\vert_{\mathfrak{W}'}$ is hyperbolic.
%In particular, $s \circ q \vert_{\mathfrak W}$ is non-singular and $(s \circ q\vert_{\mathfrak{W}})_K \cong q'$, whereby $q$ is unimodular with $\RF{1}{q'} \cong \overline{s \circ q\vert_{\mathfrak W}}$.
Writing $\ovl{\mathfrak{W}} = \lbrace \ovl{x} \mid x \in \mathfrak{W} \rbrace$ and $\ovl{\mathfrak{W}'} = \lbrace \ovl{x} \mid x \in \mathfrak{W}' \rbrace$ and passing to the residue field, 
it follows that $\ovl{\mathfrak{W}}$ and $\ovl{\mathfrak{W}'}$ are orthogonal with respect to $\ovl{s} \circ \RF{1}{q}$, that $\ovl{s} \circ \RF{1}{q}\lvert_{\ovl{\mathfrak{W}}}$ is anisotropic (since $s \circ q\vert_{\mathfrak W}$ is anisotropic non-singular),
and that $\ovl{s} \circ \RF{1}{q}\vert_{\ovl{\mathfrak{W}'}}$ is hyperbolic (as it is the reduction modulo $\mf{m}_v$ of the hyperbolic form $s \circ q\vert_{\mathfrak{W}'}$).
We obtain that $q' \cong (s \circ q\vert_{\mathfrak W})_K$ is unimodular and $\RF{1}{q'} \cong \overline{s \circ q\vert_{\mathfrak W}} \cong \ovl{s} \circ \RF{1}{q}\vert_{\ovl{\mathfrak{W}}}$, which implies the desired statement.
\end{proof}

\section{Reciprocity}
\label{sec:reciprocity}

In this section, we prove a reciprocity law for Pfister forms over function fields, \Cref{T:Reciprocity-Main}.
To state our result, we first need to consider an appropriate notion of dimension for (function) fields.
For a field $K$, we denote by $\cd_2(K)$ the \emph{$2$-cohomological dimension} of $K$.
We refer to \cite[Section VI.5]{Neu15} for a definition.
Except for in the proof of \Cref{lem:transfer-surj-kernel}, we will only require the following characterisation of $2$-cohomological dimension
deduced from well-known facts, which the reader may thus take as a definition.
\begin{prop}\label{P:cdCharacterisation}
Let $K$ be a field with $\charac(K) \neq 2$ and $n \in \nat$.
Then $\cd_2(K) \leq n$ if and only if, for every finite separable field extension $L/K$, $I_q^{n+1} L = 0$.
\end{prop}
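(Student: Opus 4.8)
The plan is to deduce this from the standard cohomological characterisation of $2$-cohomological dimension via the Milnor conjecture (theorems of Voevodsky et al.), which identifies the graded Witt group with Galois cohomology in characteristic not $2$. Recall the chain of isomorphisms mentioned in \Cref{rem:residue-K-cohom}: for any field $L$ of characteristic not $2$ one has $I_q^n(L)/I_q^{n+1}(L) \cong H^n(L, \zz/2\zz)$, compatibly with restriction along field extensions. The claim to be proved says that $\cd_2(K) \leq n$ holds if and only if $I_q^{n+1} L = 0$ for every finite separable extension $L/K$. I would prove the two implications separately.

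\emph{First}, suppose $\cd_2(K) \leq n$. Let $L/K$ be a finite separable extension; then $\cd_2(L) = \cd_2(K) \leq n$ as well (a finite separable extension does not change cohomological dimension, being an open subgroup of the absolute Galois group of finite index). Hence $H^m(L, \zz/2\zz) = 0$ for all $m > n$. I claim this forces $I_q^{n+1} L = 0$. The graded pieces $I_q^m L / I_q^{m+1} L \cong H^m(L, \zz/2\zz)$ vanish for all $m \geq n+1$, so it remains to check that the filtration $I_q^{n+1} L \supseteq I_q^{n+2} L \supseteq \cdots$ is exhaustive in the sense that its intersection is zero — equivalently, that $I_q^m L = 0$ for $m$ large. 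This is the Arason--Pfister Hauptsatz: an anisotropic form in $I_q^m L$ has dimension $\geq 2^m$, so for a fixed nonzero class one gets a contradiction once $m$ exceeds the dimension; combined with vanishing of all graded quotients from level $n+1$ upward, one gets $I_q^{n+1} L = 0$ by a downward induction on $m$ starting from $I_q^m L = 0$ for $m \gg 0$ and using the exact sequence $0 \to I_q^{m+1} L \to I_q^m L \to H^m(L,\zz/2\zz) \to 0$.

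\emph{Conversely}, suppose $I_q^{n+1} L = 0$ for every finite separable $L/K$. Then in particular $H^{n+1}(L, \zz/2\zz) \cong I_q^{n+1} L / I_q^{n+2} L = 0$ for every such $L$. By a standard criterion for cohomological dimension at the prime $2$ (see e.g.\ \cite[Section VI.5]{Neu15}; concretely, $\cd_2(K) \leq n$ iff $H^{n+1}(K', \zz/2\zz) = 0$ for $K'$ ranging over finite separable extensions of $K$, using the $p$-primary decomposition and the fact that vanishing of $H^{n+1}$ with $\zz/p\zz$-coefficients over all finite separable extensions propagates to all higher degrees), this yields $\cd_2(K) \leq n$.

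\emph{The main obstacle} I anticipate is purely one of bookkeeping rather than of real depth: one must be careful that the criterion ``$\cd_2(K) \leq n \iff H^{n+1} = 0$ over all finite separable extensions'' is applied at the prime $2$ only, and that the Milnor conjecture is being invoked only over fields of characteristic $\neq 2$, which is exactly the hypothesis of the proposition. A secondary subtlety is the first implication: vanishing of the graded quotients $I_q^m/I_q^{m+1}$ for $m \geq n+1$ does not a priori give $I_q^{n+1} = 0$ without knowing the filtration eventually vanishes, and the clean way to get that is the Arason--Pfister Hauptsatz. Neither ingredient is difficult, but both should be cited explicitly rather than waved through.
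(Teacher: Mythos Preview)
Your proposal is correct and takes essentially the same approach as the paper: both use the cohomological criterion $\cd_2(K) \leq n \iff H^{n+1}(L,\mu_2) = 0$ for all finite separable $L/K$, the Milnor conjecture identification $H^{n+1}(L,\mu_2) \cong I_q^{n+1}L/I_q^{n+2}L$, and the Arason--Pfister Hauptsatz to pass from vanishing of the graded piece to $I_q^{n+1}L = 0$. The only cosmetic difference is that the paper argues the last step more directly (noting $I_q^{n+1}L = I_q^{n+2}L$ forces $I_q^{n+1}L = 0$ by Arason--Pfister, without first invoking vanishing of all higher $H^m$), whereas you take the slightly longer route via downward induction --- but the content is the same.
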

\begin{proof}
It is a known characterisation of $p$-cohomological dimension for a prime number $p \neq \charac(K)$ that $\cd_p(K) \leq n$ if and only if, for every finite separable extension $L/K$, $H^{n+1}(L, \mu_p) = 0$, where $\mu_p$ is the group of $p$-th roots of unity with the usual action of the absolute Galois group of $L$; see for example the proof of \cite[Proposition 6.5.11]{Neu15}.
For the case $p=2$, by the Milnor Conjectures (Theorem of Orlov--Vishik--Voevodsky, see e.g.~\cite[Fact 16.2]{ElmanKarpenkoMerkurjev})
$H^{n+1}(L, \mu_2)$ is isomorphic as a group to $I^{n+1}_q L/I_q^{n+2} L$.
(In characteristic away from $2$, \cite{ElmanKarpenkoMerkurjev} write $H^{n+1}(L)$ for the Galois cohomology group $H^{n+1}(L, \mu_2)$,
see the definition in \cite[§101, p.~402 and p.~404]{ElmanKarpenkoMerkurjev},
noting that the Galois module $\mu_2^{\otimes m}$ is isomorphic $\mathbb{Z}/2\mathbb{Z}$ for all $m$
since the absolute Galois group acts trivially on $\mu_2 = \{ \pm 1 \}$.)
Finally, we have $I^{n+1}_q L/I_q^{n+2} L = 0$ if and only if $I^{n+1}_q L = I^{n+2}_q L$ if and only if $I_q^{n+1} L = 0$:
indeed, the group $I^{n+1}_q L$ is by definition generated by the classes of $(n+1)$-fold Pfister forms,
but by the Arason-Pfister Hauptsatz \cite[Theorem 23.7~(1)]{ElmanKarpenkoMerkurjev}
an $(n+1)$-fold Pfister form whose class lies in $I^{n+2}_q L$ must be isotropic and
hence its class is the zero element in $I^{n+1}_q L$,
so $I^{n+1}_q L = I^{n+2}_q L$ is only possible if both groups are trivial.
\end{proof}

For an arbitrary field $K$, we define the \emph{$2$-dimension} $\dim_2(K) \in \mathbb{N} \cup \{ \infty \}$
to be $\cd_2(K)$ if $\charac(K) \neq 2$,
and otherwise the quantity $d$ with $2^d = 2[K : K\pow{2}]$
(in other words, $\dim_2 K$ is $1$ plus the imperfect exponent of $K$ in the second case).

The case distinction in the definition is necessitated by the bad behaviour of $2$-cohomological dimension in characteristic $2$,
where $\cd_2$ is always bounded by $1$.
We note that our notion of $2$-dimension is similar, but not quite identical to the one of \cite[220]{Kato_GaloisCohomologyCompleteValuedFields};
for instance, to an algebraically closed field of characteristic $2$
we assign $2$-dimension $1$ whereas Kato would assign $2$-dimension $0$.
Our notion is designed ad hoc to simplify our statements.

Let us note that $2$-dimension behaves well with regard to finitely generated field extensions,
in the sense that for finitely generated $L/K$ with $\dim_2 K < \infty$
we have $\dim_2 L = \dim_2 K + r$ where $r$ is the transcendence degree of $L$ over $K$.
In case $\charac(K) \neq 2$, this is \cite[Theorem 6.5.14]{Neu15},
and otherwise \cite[Chapitre V, §16, No 6, Corollaire 3 to Théorème 4]{Bourbaki_AlgebreII}.

The main theorem of this section is as follows.
It concerns \emph{function fields in one variable} $F/K$, i.e.\ finitely generated field extensions of transcendence degree $1$.
Given such a function field, we write $\Val(F/K)$ for the set of $\zz$-valuations of $F$ trivial on $K$.
Such $\zz$-valuations are sometimes also called \emph{places}.
We are mostly (but not exclusively) interested in those function fields in one variable which are regular.
Recall here that a field extension $F/K$ is called \emph{regular} if $K$ is relatively algebraically closed in $F$ and $F/K$ is a separable extension;
see \cite[Section 2.6]{Fri08} for properties and equivalent characterisations of regular field extensions.
\begin{stel}\label{T:Reciprocity-Main}
  Let $K$ be a field whose $2$-dimension $d = \dim_2 K$ satisfies $1 \leq d < \infty$.
  Let $F/K$ be a regular function field in one variable.
  Then the maps
  \[ I_q^{d+1}F \to \bigoplus_{v \in \Val(F/K)} I_q^d(Fv) \to I_q^d K\]
  form an exact sequence,
  where the first map is the sum of residue maps
  and the second map is the sum of transfer maps induced by any choice of non-zero $K$-linear maps $Fv \to K$.
\end{stel}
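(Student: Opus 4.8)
The plan is to reduce the statement to a known reciprocity-type result via the natural isomorphisms relating quadratic Witt groups to Milnor $K$-theory or Galois cohomology in the case $\charac K \neq 2$, and to handle characteristic $2$ by a parallel argument using the residue and transfer maps directly. First I would treat the easier half, namely that the composition $I_q^{d+1}F \to \bigoplus_v I_q^d(Fv) \to I_q^d K$ is zero. Since $I_q^{d+1}F$ is generated by classes of $(d+1)$-fold Pfister forms, it suffices to check this on such a class $[q]$; by \Cref{P:Pfister-semilocal-presentation}, applied to any finite set $S$ of places where $q$ has nontrivial residue (there are only finitely many, as $q$ is defined over a finitely generated field), we may assume $q = \llangle a_1, \dotsc, a_d, b]]_F$ and compute all residues explicitly via \Cref{P:Pfisterresidue}. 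The vanishing of the sum of transfers is then exactly a reciprocity statement for Pfister forms over $F/K$: in characteristic not $2$ it follows from the classical Faddeev/Scharlau reciprocity for $K(T)$ and Milnor $K$-theory or Galois cohomology combined with the Milnor conjectures, extended from the rational subfield to $F$ via a finite transfer and a norm argument; in characteristic $2$ one argues via the corresponding reciprocity for Kato's complex or differential forms, or by a direct geometric argument over the curve. One should note that the transfer maps are well-defined on $I_q^d(Fv)$ independently of the chosen $K$-linear map $Fv \to K$ by \Cref{lem:transfer-unique}, since $\dim_2 K = d$ forces $I_q^{d+1} K = 0$ and hence $I_q^{d+1}(Fv) = 0$ as $Fv/K$ is finite.

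The harder half is surjectivity of $I_q^{d+1}F \to \ker\bigl(\bigoplus_v I_q^d(Fv) \to I_q^d K\bigr)$, i.e.\ exactness in the middle. My approach here is to compare the sequence in question with the analogous sequence in Milnor $K$-theory modulo $2$ (or Galois cohomology with $\mu_2$ coefficients), where exactness is the content of the Faddeev exact sequence for the function field of a curve. Concretely: there is a commutative diagram relating $I_q^{d+1}F/I_q^{d+2}F$, $\bigoplus_v I_q^d(Fv)/I_q^{d+1}(Fv)$ and $I_q^d K/I_q^{d+1}K$ to the corresponding $K^M/2$ groups; since $\dim_2 F = d+1$ (by the behaviour of $2$-dimension under finitely generated extensions recalled before the theorem), the Arason--Pfister Hauptsatz gives $I_q^{d+2}F = 0$, so $I_q^{d+1}F/I_q^{d+2}F = I_q^{d+1}F$, and similarly $I_q^{d+1}(Fv) = 0$ and $I_q^{d+1}K = 0$. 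Thus all the quotient groups collapse to the honest groups appearing in the theorem, and the vertical comparison maps are isomorphisms by the resolved Milnor conjectures (characteristic not $2$) or by definition of the residue and transfer maps in terms of the graded Witt pieces (characteristic $2$, where one instead compares with Kato's logarithmic differential forms, cf.\ \Cref{rem:residue-K-cohom}). Exactness of the $K$-theoretic/cohomological sequence is the classical residue--corestriction sequence for the curve associated to $F/K$, valid over an arbitrary perfect or indeed arbitrary base provided the cohomological/differential dimension of $K$ is exactly $d$.

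The main obstacle I expect is the characteristic $2$ case: there the clean dictionary with Galois cohomology (where $\cd_2$ is useless) is unavailable, and one must work directly with the residue homomorphisms on the graded pieces $I_q^n/I_q^{n+1}$ as set up in \Cref{C:residueHomomorphism}, verifying that the sequence of these graded maps is exact — this amounts to reproving a Kato-style reciprocity law for the de Rham--Witt or differential-form complex of the curve, keeping careful track of the imperfection hypothesis $2^d = 2[K:K\pow2]$ which is exactly what makes the residue maps of \Cref{C:residueHomomorphism} available at level $n = d$ (for dyadic places one needs $[Fv : (Fv)\pow2] < 2^{d}$, which holds since $[Fv:(Fv)\pow2] = [K:K\pow2] = 2^{d-1}$ when $Fv/K$ is separable, and one must additionally check the finitely many inseparable residue fields do not cause trouble, or circumvent them by a separating-transcendence-basis argument). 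A secondary technical point is matching the transfer maps on residue Witt groups with the corestriction maps in $K$-theory; here \Cref{L:transfer-unimodular} and \Cref{lem:transfer-unique} are the tools that let one identify the second map in the sequence unambiguously and check compatibility with the comparison isomorphisms.
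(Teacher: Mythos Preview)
Your plan for the complex half is essentially what the paper does: reduce to a rational subfield $K(T) \subseteq F$ via transfer (using \Cref{lem:residue-transfer-commute} for commutativity), then invoke the known reciprocity for $K(T)$ (\cite[Corollary 21.7]{ElmanKarpenkoMerkurjev} in characteristic $\neq 2$, and the Aravire--Jacob result via \Cref{lem:Milnor-seq-char-2} in characteristic $2$). So far so good.

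The gap is in your treatment of exactness in the middle. You write that ``exactness of the $K$-theoretic/cohomological sequence is the classical residue--corestriction sequence for the curve associated to $F/K$'' --- but no such classical result exists for an arbitrary curve. The Faddeev/Milnor exact sequence is a statement about the rational function field $K(T)$; for a general curve the cokernel of the sum of residue maps is the unramified cohomology (or unramified $K^M/2$) of the curve, and there is no a priori reason this surjects onto its image in $I_q^d K$ with the required kernel. In other words, you are assuming exactly the statement to be proved, dressed in $K$-theoretic language. The Milnor conjectures let you translate freely between $I_q^n/I_q^{n+1}$ and $K^M_n/2$, but they do not supply the missing exactness; the Gersten complex for a curve only identifies the cokernel of the first map with the unramified group, and says nothing about how that group maps to $I_q^d K$. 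The paper itself flags this point: the complex part ``is relatively easy \dots\ the main work \dots\ is therefore to show that the two maps not only form a complex, but an exact sequence''.

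What the paper actually does for exactness is a direct geometric construction. Given $\alpha$ in the kernel of the sum of transfers, one first passes (via \Cref{lem:reciprocity-normal-reduction}) to a finite normal constant-field extension $K''/K$ over which (i) all places supporting $\alpha$ become rational, (ii) the curve becomes smooth (\Cref{lem:smoothify-function-field}), and (iii) the differences of the supporting points become $2$-divisible in $\Pic^0$ --- this last step uses the $2$-divisibility of the Jacobian over the algebraic closure. One then writes down by hand a class $\beta \in I_q^{d+1}(FK'')$ as a sum of Pfister forms $\llangle a_1,\dotsc,a_{d-1},f_i,b]]$ where $(f_i)$ has the prescribed parity at the supporting places thanks to the Picard divisibility, checks its residues match $\alpha''$ via \Cref{P:Pfisterresidue}, and transfers back down. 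Your proposal contains no analogue of this construction and no substitute reference; in characteristic $2$ in particular, your gestures toward ``Kato's complex or differential forms'' do not name a result that would give exactness for a non-rational curve.
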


Let us comment on the statement of the theorem.
Since $K$ has $2$-dimension $d$ and thus $F$ has $2$-dimension $d+1$, we have $I_q^{d+1}K = 0$ and $I_q^{d+2}F = 0$
(and likewise $I_q^{d+1}(Fv) = 0$ for the finite extensions $Fv$ of $K$):
this follows from \Cref{P:cdCharacterisation} if $\charac(K) \neq 2$,
and if $\charac(K) = 2$ it follows from the fact that all $d$-fold bilinear Pfister forms over $K$ are isotropic due to the dimension condition \cite[Example 6.5]{ElmanKarpenkoMerkurjev}.
Therefore the residue maps defined in \Cref{C:residueHomomorphism} make sense as maps $\partial_v \colon I_q^{d+1}F \to I_q^d(Fv)$,
in which form they occur in \Cref{T:Reciprocity-Main}.

Furthermore, for a finite field extension $L/K$ with $\dim_2 K = d$,
\Cref{lem:transfer-unique} implies that for any choice of non-zero $K$-linear map $L \to K$,
the induced transfer $I_q L \to I_q K$ restricts to a homomorphism $I_q^d L \to I_q K$
independent of the choice of map $L \to K$.
We will occasionally simply refer to this as \emph{the} transfer map (on $I_q^d$).
In fact, we will see in \Cref{lem:transfer-surj-kernel} that this transfer on $I_q^d L$
has image $I_q^d K$, so we may consider it as a surjective map $I_q^d L \to I_q^d K$.
The maps $I_q^d(Fv) \to I_q^d K$ in \Cref{T:Reciprocity-Main} arise in this way.

  The fact that the two homomorphisms from \Cref{T:Reciprocity-Main}
  form a complex is relatively easy to show for arbitrary $K$ of characteristic not $2$,
  and in this case is true for arbitrary $d$.
  This is related to the more general statement in Milnor $K$-theory known as
  Weil Reciprocity, see for instance \cite[Proposition 7.4.4]{GilleSzamuely_2nd}.
  The main work in the proof of the theorem is therefore to show that the two maps not only form a complex,
  but an exact sequence,
  and to handle the case of characteristic $2$.

\begin{opm}\label{rem:reciprocity-discussion}
  If $K$ is a finite field, and therefore $\dim_2 K =1$,
  it follows from the classical theory of quadratic forms over global fields
  that there is an exact sequence
  \[ 0 \to I_q^2 F \to \bigoplus_{v \in \Val(F/K)} I_q^1(Fv) \to I_q^1 K \to 0 \]
  with maps as in \Cref{T:Reciprocity-Main}.
  (In fact, the terms of this sequence are compatibly isomorphic to
  the $2$-torsion part of the Hasse--Brauer--Noether sequence
  from global class field theory \cite[Theorem 8.1.17]{Neu15}.)
  Therefore the theorem holds in this case,
  and additionally the homomorphism $I_q^2 F \to \bigoplus_{v \in \Val(F/K)} I_q^1(Fv)$ is injective.

  Let us stress that unlike in the case of a finite base field $K$,
  injectivity of the first homomorphism does not usually hold in \Cref{T:Reciprocity-Main}.
  In the case of $2$-dimension $1$, this is related to the non-vanishing of the
  so-called unramified Brauer group of the function field $F$.
  To obtain an example,
  let $K$ be a $C_1$-field such that $\charac(K) \neq 2$,
  $I^1_q K \neq 0$, and $K$ carries a $\zz$-valuation (e.g.~$K = \cc(T)$, or $K$ is the maximal unramified extension of $\qq_p$). Then $\cd_2(K) = 1$.
  As explained in \cite[Example 6.11]{SQRP} there exists an anisotropic quadratic form in $4$ variables over $F = K(T)$ which is isotropic over $F_v$ for all $v \in \Val(F/K)$.
  By \cite[Proposition 6.1.3]{GuptaThesis} there exists then also a function field in one variable $F'/K$ and an anisotropic $2$-fold Pfister form $q$ over $F'$ such that $q_{F'_v}$ is isotropic for all $v \in \Val(F'/K)$.
  This implies that $0 \neq [q] \in I^2_q F'$ but $\partial_v[q] = 0$ for all $v \in \Val(F'/K)$.
\end{opm}
\begin{opm}
In \cite[Proposition 7.2.15]{DaansThesis} a proof of \Cref{T:Reciprocity-Main} is given in the case where the characteristic is not $2$ in the language of Galois cohomology.
The proof given there follows several of the main ideas also to be found in the present exposition, but is notably shorter as it can rely on the more well-developed relevant theory and results in the literature in this case.
\end{opm}

We begin by establishing some basic properties of the transfer map,
culminating in \Cref{lem:transfer-surj-kernel} and \Cref{lem:residue-transfer-commute} below.
For proving these properties, we occasionally use the language of Galois cohomology,
since the groups $I_q^d$ may be identified with certain Galois cohomology groups.
We assume basic facts about Galois cohomology groups, as well as restriction an corestriction maps,
as found in \autocite[Chapter I]{Neu15} or \autocite[§99]{ElmanKarpenkoMerkurjev}.
\begin{lem}\label{lem:transfer-insep-iso}
  Let $L/K$ be a finite purely inseparable field extension
  where $d= \dim_2 K$ satisfies $1 \leq d < \infty$.
  The transfer $I_q^d L \to I_q K$ is injective with image $I_q^d K$.
  If $\charac(K) \neq 2$, the resulting isomorphism $I_q^d L \to I_q^d K$ is inverse to restriction.
\end{lem}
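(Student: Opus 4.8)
The plan is to treat the cases $\charac K \neq 2$ and $\charac K = 2$ separately.

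\emph{The case $\charac K \neq 2$.} Here a finite purely inseparable extension $L/K$ has odd degree (a power of $\charac K$, and trivial if $\charac K = 0$), and the absolute Galois groups $G_L$ and $G_K$ coincide canonically; hence $\dim_2 L = \dim_2 K = d$, so $I_q^{d+1} K = I_q^{d+1} L = 0$ (\Cref{P:cdCharacterisation}), and restriction $I_q^d K \to I_q^d L$ is an isomorphism, being simply the identity under the norm residue identifications of both sides with $H^d(-,\zz/2\zz)$. It then remains to check that for any non-zero $K$-linear map $s\colon L \to K$ the transfer $s^\ast$ inverts restriction on $I_q^d K$. I would use the Frobenius reciprocity formula $s^\ast(\phi_L) \cong b_s \otimes \phi$, valid for every quadratic form $\phi$ over $K$, where $b_s$ is the symmetric bilinear form $(x,y)\mapsto s(xy)$ on $L$ (one verifies this by evaluating both sides on pure tensors and comparing polar forms; cf.\ \cite[§20]{ElmanKarpenkoMerkurjev}). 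Since $\dim b_s = [L:K]$ is odd, $[b_s] - [\llangle\rrangle^b_K] \in IK$, so for $[\phi] \in I_q^d K$ we get $s^\ast(\mathrm{res}[\phi]) = [b_s]\cdot[\phi] = [\phi]$, using $IK \cdot I_q^d K \subseteq I_q^{d+1}K = 0$. Together with \Cref{lem:transfer-unique}, this shows the transfer $I_q^d L \to I_q^d K$ is the inverse of restriction, hence injective with image $I_q^d K$.

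\emph{The case $\charac K = 2$.} I would first reduce to $[L:K] = 2$: a finite purely inseparable extension is a tower of degree-$2$ steps, each intermediate field still has $\dim_2 = d$ by invariance of $[-:-\pow 2]$ under finite extensions (\cite[Theorem 3]{BeckerMaclane}), and the transfers compose and are canonical on $I_q^d$ by \Cref{lem:transfer-unique}. So write $L = K(\sqrt a)$ with $a \in K \setminus K\pow 2$, and assume $d \geq 2$ (if $d = 1$ then $K$ is perfect and $L = K$). I would then record that restriction $\mathrm{res}\colon I_q^d K \to I_q^d L$ is the zero map: writing a $d$-fold Pfister form over $K$ as $\llangle a_1, \dots, a_{d-1}, b]]_K$ with $a_i \in K^\times$, one has $a = (\sqrt a)^2 \in (K \cap L\pow 2)\setminus K\pow 2$, whence $[L\pow 2(a_1,\dots,a_{d-1}) : L\pow 2] \leq [L\pow 2 K : L\pow 2] \leq [K : K\cap L\pow 2] \leq 2^{d-2} < 2^{d-1}$, so the bilinear Pfister form $\llangle a_1,\dots,a_{d-1}\rrangle^b_L$ is isotropic (by the standard isotropy criterion for bilinear Pfister forms in characteristic $2$), hence metabolic, hence $\llangle a_1,\dots,a_{d-1},b]]_L$ is hyperbolic; as these Pfister forms generate $I_q^d K$, restriction vanishes.

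With restriction gone, the transfer on $I_q^d$ must be shown to be an isomorphism by other means, and here the plan is to pass to Kato's description of these top graded pieces. Since $\dim_2 K = \dim_2 L = d$, the absolute Kähler differential spaces $\Omega^{d-1}_K$ and $\Omega^{d-1}_L$ are one-dimensional over $K$ resp.\ $L$, Kato's theorem identifies $I_q^d K$ and $I_q^d L$ with the corresponding mod-$2$ differential cohomology groups, and under these identifications the Scharlau transfer corresponds to Kato's corestriction (the relevant compatibility for quadratic forms in characteristic $2$ being available in the literature, cf.\ \cite{Aravire-Jacob}). Choosing a $2$-basis $a = a_1, a_2, \dots, a_{d-1}$ of $K$ over $K\pow 2$, the tuple $\sqrt a, a_2, \dots, a_{d-1}$ is a $2$-basis of $L$ over $L\pow 2$, so both cohomology groups are computed from the explicit logarithmic generators $\tfrac{da}{a}\wedge\tfrac{da_2}{a_2}\wedge\cdots$ and $\tfrac{d\sqrt a}{\sqrt a}\wedge\tfrac{da_2}{a_2}\wedge\cdots$, and one checks by a direct computation with these that corestriction is an isomorphism (the vanishing of restriction recorded above being the obstruction dual to surjectivity).

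The main obstacle is precisely this last step in characteristic $2$: unlike in odd characteristic, restriction on $I_q^d$ vanishes and $\mu_2$-Galois cohomology is useless, so the transfer has to be analysed directly through Kato's differential-form description, and the corestriction computation must be carried out by hand while keeping the characteristic arbitrary — exactly the kind of technical care the introduction flags. Locating the compatibility of the Scharlau transfer with Kato's corestriction, and organising this differential-form bookkeeping for $L = K(\sqrt a)$, is where essentially all the work lies.
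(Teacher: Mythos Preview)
Your treatment of the $\charac K \neq 2$ case is correct and essentially what the paper does; the paper simply cites Arason for the fact that transfer inverts restriction on the whole Witt group, while your Frobenius-reciprocity computation makes this explicit at the top level.

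In characteristic $2$ your approach diverges from the paper's. Both reduce to $[L:K] = 2$, but from there the paper takes a shorter route that avoids differential forms entirely. It uses the identification $I_q^d K \cong H^1(K, I^{d-1}K^{\mathrm{sep}})$ from \cite[Lemma 40.2]{ElmanKarpenkoMerkurjev}: since $L/K$ is purely inseparable the absolute Galois groups of $K$ and $L$ coincide, so the Scharlau transfer on $I_q^d$ is the map induced in $H^1$ by a Galois-equivariant \emph{bilinear} transfer $I^{d-1}L^{\mathrm{sep}} \to I^{d-1}K^{\mathrm{sep}}$ on coefficients. It therefore suffices to show that this coefficient map is an isomorphism, and that follows from the exact sequence of \cite[Theorem 40.6]{ElmanKarpenkoMerkurjev} once one checks that restriction $I^{d-1}K^{\mathrm{sep}} \to I^{d-1}L^{\mathrm{sep}}$ vanishes. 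That vanishing is the same $2$-independence argument you give, but applied to $(d-1)$-fold \emph{bilinear} Pfister forms over $K^{\mathrm{sep}}$ rather than to $d$-fold quadratic Pfister forms over $K$.

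So the key difference: you propose to compute the transfer directly in Kato's differential-form model, whereas the paper pushes the problem to the coefficient module $I^{d-1}$ of the separable closures, where it becomes a purely bilinear-Witt-group statement handled by an off-the-shelf exact sequence. Your vanishing of restriction $I_q^d K \to I_q^d L$ is correct but, as you yourself note, does not by itself finish the argument; the analogous vanishing one degree down, over the separable closures, is exactly the input the EKM machinery wants. This completely sidesteps the compatibility of Scharlau transfer with Kato's corestriction for \emph{inseparable} extensions that you (rightly) flag as the main obstacle in your plan, and for which the reference to \cite{Aravire-Jacob} is optimistic.
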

\begin{proof}
  Let us first suppose that $\charac(K) \neq 2$.
  Then the restriction $I_q K \to I_q L$ is an isomorphism,
  and for a suitable choice of functional $L \to K$, the induced transfer map
  $I_q L \to I_q K$ is inverse to the restriction
  (see for example \autocite[beginning of Section 2 \& middle of p.~458]{Arason_CohomInvarianten}).
  Since restriction and transfer are compatible with the filtration of $I_q$ \cite[Satz 3.3]{Arason_CohomInvarianten},
  this induced transfer map in particular gives an isomorphism $I_q^d L \to I_q^d K$.

  Let us now suppose that $\charac(K) = 2$.
  Since $L/K$ decomposes into inseparable quadratic steps,
  we may suppose that $L/K$ is inseparable quadratic.
  The image of $I_q^d L$ under the transfer map is contained in $I_q^d K$ by \autocite[Corollary~34.17~(2)]{ElmanKarpenkoMerkurjev}.
  We now transform the problem of proving that the transfer map $I_q^d L \to I_q^d K$ is an isomorphism
  into a problem about a bilinear transfer map, as follows.
  We have the following diagram:
    \[ \xymatrix{
       I_q^d L \ar[r] \ar[d] & I_q^d K \ar[d] \\
       H^1(L, K^M_{d-1}(L^{\mathrm{sep}})/2K^M_{d-1}(L^{\mathrm{sep}}))  \ar[r] \ar[d] & H^1(K, K^M_{d-1}(K^{\mathrm{sep}})/2K^M_{d-1}(K^{\mathrm{sep}})) \ar[d] \\
       H^1(L, I^{d-1} L^{\mathrm{sep}}) \ar[r] & H^1(K, I^{d-1} K^{\mathrm{sep}})
     }\]
  Here $K^M_n$ stands for the $n$-th Milnor K-group, and $K^{\mathrm{sep}} \subseteq L^{\mathrm{sep}}$ are separable closures of $K$ and $L$, respectively.
  We will henceforth, in comparing with the statements of \autocite{ElmanKarpenkoMerkurjev},
  frequently use that $I_q^{d+1} K = 0$ and $I^d K^{\mathrm{sep}} = 0$ (and similarly for $L$, $L^{\mathrm{sep}}$)
  since $d = \dim_2 K$ and therefore all $d$-fold bilinear Pfister forms are isotropic over all fields under consideration
  \autocite[Example~6.5]{ElmanKarpenkoMerkurjev}.

  The upper square is the commutative square of \autocite[Lemma~40.2]{ElmanKarpenkoMerkurjev}.
  Here the middle horizontal arrow is induced by the restriction isomorphism of absolute Galois groups $\operatorname{Gal}(L^{\mathrm{sep}}/L) \to \operatorname{Gal}(K^{\mathrm{sep}}/K)$
  and the norm map $K^M_{d-1}(L^{\mathrm{sep}}) \to K^M_{d-1}(K^{\mathrm{sep}})$ in K-theory,
  and the vertical arrows are isomorphisms.%
  \footnote{In \autocite[Lemma~40.2]{ElmanKarpenkoMerkurjev}, the terms in the middle row of our diagram are labelled $H^n(L)$ and $H^n(K)$
    (where $n = d$),
    or $H^{n,n-1}(L)$ and $H^{n,n-1}(K)$ later in the book.
    Unfortunately, there is an inconsistency in the use of this notation,
    given that $H^n(L)$ stands for $H^{n,n}(L)$ in \autocite[§101]{ElmanKarpenkoMerkurjev}.
    This was already observed in \autocite[footnote 2 on p.~1207]{DittmannPop_v1}.
  }
  The lower square is induced in cohomology by the commutative square of \autocite[Lemma~40.1]{ElmanKarpenkoMerkurjev}.
  The vertical arrows in our lower square are thus isomorphisms induced by
  isomorphisms $K^M_{d-1}(L^{\mathrm{sep}})/2K^M_{d-1}(L^{\mathrm{sep}}) \to I^{d-1} L^{\mathrm{sep}}$
  and similarly for $K^{\mathrm{sep}}$, see \autocite[Fact~5.15]{ElmanKarpenkoMerkurjev}.
  The bottom horizontal arrow is induced by the restriction isomorphism of absolute Galois groups and
  the bilinear transfer map $I^{d-1} L^{\mathrm{sep}} \to I^{d-1} K^{\mathrm{sep}}$ with respect to an arbitrary
  non-zero functional $L^{\mathrm{sep}} \to K^{\mathrm{sep}}$.

  Since the entire diagram commutes and all vertical arrows are isomorphisms,%
  \footnote{The vertical isomorphisms seem to be the same as the horizontal arrows
    in the following \Cref{lem:kato-isos-Ir} obtained from \autocite{Kato_Char2}.
    However, this is not so easy to see from the sources cited, and so we do not use it.}
  it suffices to prove that the bottom horizontal arrow is an isomorphism,
  or indeed that the bilinear transfer map $I^{d-1} L^{\mathrm{sep}} \to I^{d-1} K^{\mathrm{sep}}$ is an isomorphism.

  This holds by \autocite[Theorem 40.6]{ElmanKarpenkoMerkurjev},
  since the restriction $I^{d-1}K^{\mathrm{sep}} \to I^{d-1}L^{\mathrm{sep}}$ is the zero map:
  Indeed, by the assumption that $[K : K\pow{2}] = 2^{d-1}$, no $d-1$ elements of $K^{\mathrm{sep}}$
  which are $2$-independent can remain so in $L$ as $L/K$ is inseparable,
  and so every anisotropic $(d-1)$-fold bilinear Pfister form in $K^{\mathrm{sep}}$
  must become isotropic in $L^{\mathrm{sep}}$ by \autocite[Example 6.5]{ElmanKarpenkoMerkurjev}.
\end{proof}

\begin{lem}\label{lem:kato-isos-Ir}
  Let $K$ be a field of characteristic $2$.
  For every $d > 0$, there is a natural isomorphism
  $I_q^d K \cong H^1(K, I^{d-1}K^{\mathrm{sep}})$.
  These isomorphisms are compatible for different $d$, in the sense that for $d \geq d'$
  the diagram
  \[ \xymatrix{
      I_q^d K \ar[r] \ar[d] & H^1(K, I^{d-1}K^{\mathrm{sep}}) \ar[d] \\
      I_q^{d'} K \ar[r] & H^1(K, I^{d'-1}K^{\mathrm{sep}})
    }\]
  commutes, where the left-hand vertical map is the natural inclusion,
  and the right-hand vertical map is the map induced in cohomology
  by the inclusion $I^{d-1}K^{\mathrm{sep}} \hookrightarrow I^{d'-1}K^{\mathrm{sep}}$.
\end{lem}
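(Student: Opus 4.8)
The plan is to recall the description of the groups $I_q^d K$ in characteristic $2$ via Kato's work, as already used in the proof of \Cref{lem:transfer-insep-iso}. Specifically, by \autocite[Lemma 40.2]{ElmanKarpenkoMerkurjev} (or the surrounding discussion, going back to Kato), there is for each $d > 0$ a natural identification of $I_q^d K$ with $H^1(K, I^{d-1}K^{\mathrm{sep}})$, where $I^{d-1}K^{\mathrm{sep}}$ denotes the $(d-1)$-st power of the fundamental ideal of the Witt ring of the separable closure $K^{\mathrm{sep}}$, viewed as a discrete Galois module for the absolute Galois group $\Gal(K^{\mathrm{sep}}/K)$. (When $d=1$, the module $I^0K^{\mathrm{sep}} = WK^{\mathrm{sep}} \cong \zz/2\zz$ since $K^{\mathrm{sep}}$ is separably closed of characteristic $2$, so this recovers the familiar description of $I_q^1 K = I_q K$ as $H^1(K, \zz/2\zz)$ classifying the relevant Artin--Schreier type extensions.) First I would simply state that the isomorphism of the lemma is this Kato isomorphism, citing \autocite[Lemma 40.2]{ElmanKarpenkoMerkurjev}; there is nothing new to prove for existence and naturality.

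The substance of the lemma is the compatibility square for $d \geq d'$. Here I would argue as follows. On the Galois module side, there is an $\Gal(K^{\mathrm{sep}}/K)$-equivariant inclusion $\iota\colon I^{d-1}K^{\mathrm{sep}} \hookrightarrow I^{d'-1}K^{\mathrm{sep}}$, which on $I_q$-level corresponds, over $K^{\mathrm{sep}}$, to the inclusion $I_q^{d}K^{\mathrm{sep}} \hookrightarrow I_q^{d'}K^{\mathrm{sep}}$ (recall that in characteristic $2$ over a separably closed field, multiplication by a suitable quadratic Pfister factor relates the bilinear powers $I^{d-1}$ to the quadratic groups $I_q^{d}$; this is exactly the mechanism underlying the Kato isomorphism, see \cite[§40]{ElmanKarpenkoMerkurjev}). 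The induced map $H^1(K, \iota)$ on cohomology is by functoriality of $H^1(K, -)$ in the coefficient module. So what must be checked is that the Kato isomorphisms are compatible with $\iota$ on one side and with the natural inclusion $I_q^d K \hookrightarrow I_q^{d'} K$ on the other. The cleanest way to see this is to trace through the construction of the Kato isomorphism: it is built from the exact sequences of Galois modules expressing $I^{e}K^{\mathrm{sep}}/I^{e+1}K^{\mathrm{sep}}$ and the resulting long exact cohomology sequences, and these are visibly functorial in $e$ in a way compatible with the inclusions of successive $I^e$. Concretely, unwinding \autocite[Lemma 40.2]{ElmanKarpenkoMerkurjev} and the preceding results in \cite[§40]{ElmanKarpenkoMerkurjev}, the isomorphism $I_q^d K \cong H^1(K, I^{d-1}K^{\mathrm{sep}})$ sends the class of a $d$-fold quadratic Pfister form $\llangle a_1,\dotsc,a_{d-1}, b]]_K$ to the cohomology class obtained by cup-producting the Artin--Schreier class of $b$ with the image of the bilinear Pfister form $\llangle a_1,\dotsc,a_{d-1}\rrangle^b$ in $H^0$; inspecting this formula on generators makes the compatibility square commute, since dropping a slot $a_i$ (i.e. passing from $d$ to $d-1$) is the same operation on both sides.

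The main obstacle is purely expository: one must be careful about the exact normalisation of the Kato isomorphism in \cite[§40]{ElmanKarpenkoMerkurjev} and about which bilinear transfer or identification is used at the base level, so that the diagram commutes on the nose rather than up to a sign or a choice. I would handle this by reducing everything to the generating quadratic Pfister forms, where both vertical maps and both Kato isomorphisms have explicit descriptions in terms of Artin--Schreier symbols and bilinear Pfister symbols, and the commutativity becomes the evident identity ``forget the last slot on one side = restrict the coefficient inclusion on the other''. Since $I_q^d K$ is generated by classes of $d$-fold Pfister forms by definition, checking on these generators suffices, and this avoids any delicate diagram chase with the cohomology of filtration quotients. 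The characteristic-$2$ hypothesis is essential and already built into all the cited results.
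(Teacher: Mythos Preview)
Your approach is essentially the same as the paper's: both defer to the literature for the existence of the Kato isomorphism and then assert that compatibility for varying $d$ is visible from its construction. The paper is even terser than you are, citing \autocite[p.~506, Corollary]{Kato_Char2} directly (rather than \cite[\S 40]{ElmanKarpenkoMerkurjev}) and simply stating that ``functoriality and compatibility for different $d$ are not stated explicitly there, but follow from the proof'', without the explicit check on generators you propose.

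Two small points in your write-up are off and should be fixed if you keep the extra detail. First, the parenthetical claim $I^0 K^{\mathrm{sep}} = W K^{\mathrm{sep}} \cong \zz/2\zz$ is false when $K$ is imperfect: then $K^{\mathrm{sep}}$ is not algebraically closed and has nontrivial square classes, so its bilinear Witt ring is larger. Second, your description of the compatibility check as ``dropping a slot $a_i$'' does not match the left vertical map, which is the \emph{inclusion} $I_q^d K \hookrightarrow I_q^{d'} K$; a $d$-fold Pfister form lands in $I_q^{d'} K$ as itself, not as a $d'$-fold Pfister form, so the verification on generators is not quite the identity you describe. Neither point undermines the argument --- compatibility really does follow from inspecting Kato's construction --- but the explicit formulas need more care than you give them, which is presumably why the paper leaves the matter as a pointer to the original proof.
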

\begin{proof}
  The existence of the isomorphisms is stated in \autocite[p.~506, Corollary]{Kato_Char2}.
  Functoriality and compatibility for different $d$ are not stated explicitly there,
  but follow from the proof.
\end{proof}

\begin{lem}\label{lem:transfer-char2-filtered-cor}
  Let $L/K$ be a finite separable extension of fields of characteristic $2$,
  let $\Omega$ be a separable closure of $L$ (hence also of $K$),
  and let $d > 0$.

  The transfer $I_q L \to I_q K$ induced by the trace map $L \to K$ restricts to a map $I_q^d L \to I_q^d K$.
  Furthermore under the identification of \Cref{lem:kato-isos-Ir},
  the latter map corresponds to the corestriction $\operatorname{cor} \colon H^1(L, I^{d-1}\Omega) \to H^1(K, I^{d-1}\Omega)$.
\end{lem}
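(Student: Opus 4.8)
The plan is to prove the lemma first in the case $d = 1$, where it reduces to a known compatibility of Kato's isomorphism, and then to deduce the general case by a formal argument exploiting the naturality of corestriction in the coefficient module together with the compatibilities recorded in \Cref{lem:kato-isos-Ir}.

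In the base case $d = 1$ one has $I_q^1 L = I_q L$ and $I^0\Omega = W\Omega$, so the assertion is that the Scharlau transfer $s^\ast \colon I_q L \to I_q K$ attached to $s = \mathrm{Tr}_{L/K}$ corresponds, under the isomorphisms $I_q L \cong H^1(L, W\Omega)$ and $I_q K \cong H^1(K, W\Omega)$ of \Cref{lem:kato-isos-Ir}, to the corestriction $\operatorname{cor} \colon H^1(L, W\Omega) \to H^1(K, W\Omega)$ associated to $G_L \leq G_K$. This is the characteristic $2$ counterpart of the classical fact that, in the cohomological description of the graded pieces of the Witt group, corestriction is computed by the Scharlau transfer along the trace form; it is built into Kato's construction of these isomorphisms (see \cite{Kato_Char2} and \cite[Section~40]{ElmanKarpenkoMerkurjev}) and is of the same nature as the compatibility invoked in the proof of \Cref{lem:transfer-insep-iso}.

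Granting this, fix $d > 1$. The inclusion of $G_K$-modules $I^{d-1}\Omega \hookrightarrow I^0\Omega = W\Omega$ induces, by functoriality of corestriction in the coefficients, a commutative square whose vertical arrows are corestriction maps (for $G_L \leq G_K$) and whose horizontal arrows are the maps induced on $H^1$ by this module inclusion. By the case $d' = 1$ of \Cref{lem:kato-isos-Ir}, the two horizontal arrows are identified with the natural inclusions $I_q^d L \hookrightarrow I_q L$ and $I_q^d K \hookrightarrow I_q K$; in particular the lower one is injective. The right-hand vertical arrow is, by the base case, the transfer $s^\ast$ with $s = \mathrm{Tr}_{L/K}$. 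A diagram chase now completes the argument: for $x \in I_q^d L$, writing $\tilde x \in H^1(L, I^{d-1}\Omega)$ for its image under the isomorphism of \Cref{lem:kato-isos-Ir}, commutativity of the square shows that $s^\ast(x) \in I_q L$ is the image of $\operatorname{cor}(\tilde x) \in H^1(K, I^{d-1}\Omega) \cong I_q^d K$ under the inclusion $I_q^d K \hookrightarrow I_q K$; since that inclusion is injective, it follows simultaneously that $s^\ast$ maps $I_q^d L$ into $I_q^d K$ and that the induced map $I_q^d L \to I_q^d K$ coincides, under the isomorphisms of \Cref{lem:kato-isos-Ir}, with $\operatorname{cor} \colon H^1(L, I^{d-1}\Omega) \to H^1(K, I^{d-1}\Omega)$.

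I expect the base case $d = 1$ to be the only real obstacle. One must check that it is exactly the transfer along the trace form — and not the transfer along some other nonzero $K$-linear functional, for which the analogous statement fails as soon as the relevant Witt groups are nonzero — that computes corestriction, which requires either unwinding the construction of Kato's isomorphism or citing its treatment in the literature. Everything beyond that is formal, relying only on the filtration-compatibility in \Cref{lem:kato-isos-Ir} and the naturality of corestriction.
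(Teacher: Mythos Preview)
Your proposal is correct and follows essentially the same route as the paper: both arguments set up the commutative square with vertical corestriction maps and horizontal maps induced by $I^{d-1}\Omega \hookrightarrow W\Omega$, invoke the compatibility of \Cref{lem:kato-isos-Ir} to identify the horizontal arrows with the inclusions $I_q^d \hookrightarrow I_q$, and then chase. The only difference is in the treatment of what you call the base case $d=1$: the paper cites a precise reference, namely \cite[Satz~9]{Arason_WittCohomChar2}, for the fact that corestriction on $H^1(\,\cdot\,, W\Omega)$ is the Scharlau transfer along the trace, whereas you leave this as the acknowledged ``real obstacle'' and gesture at Kato's construction and \cite[Section~40]{ElmanKarpenkoMerkurjev}; replacing your sketch there with that citation would make your argument complete and essentially identical to the paper's.
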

\begin{proof}
  Consider the commutative square
  \[\xymatrix{
      H^1(L, I^{d-1}\Omega) \ar[d]^{\operatorname{cor}} \ar[r] &H^1(L, W\Omega) \ar[d]^{\operatorname{cor}} \\
      H^1(K, I^{d-1}\Omega) \ar[r] & H^1(K, W\Omega).
    }\]
  By the compatibility statement of \Cref{lem:kato-isos-Ir},
  the terms of the square are compatibly isomorphic with the terms of the following square:
  \[ \xymatrix{
      I_q^d L \ar@{^{(}->}[r]\ar[d] & I_q L \ar[d] \\
      I_q^d K \ar@{^{(}->}[r] & I_q K
    }\]
  The right-hand vertical arrow is precisely the transfer induced by the trace $L \to K$
  \autocite[Satz 9]{Arason_WittCohomChar2}.
  It follows that the left-hand vertical arrow
  must be the restriction of this transfer $I_qL \to I_qK$.
\end{proof}

\begin{lem}\label{lem:transfer-surj-kernel}
  Let $L/K$ be a finite field extension
  where $d = \dim_2 K$ satisfies $1 \leq d < \infty$.

  The image of the transfer map $I_q^d L \to I_q K$ is precisely $I_q^d K$.
  If $L/K$ is normal, then the kernel of this map is generated as an abelian group
  by expressions $\sigma q - q$, where $q \in I_q^d L$ and $\sigma \in \operatorname{Aut}(L/K)$.
\end{lem}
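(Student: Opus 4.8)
The plan is to reduce first to the case of a separable extension, and then to pass to Galois cohomology, where both assertions become a single exact-sequence argument that exploits the finiteness of the relevant cohomological dimension.

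\emph{Reduction to the separable case.} Write $L_0 = L \cap K^{\mathrm{sep}}$, so that $L_0/K$ is separable and $L/L_0$ is purely inseparable; then $\dim_2 L_0 = \dim_2 L = \dim_2 K = d$, so $I_q^{d+1}$ vanishes over each of $K$, $L_0$, $L$, and the transfer maps restricted to $I_q^d$ are independent of the chosen functional by \Cref{lem:transfer-unique}. Since Scharlau transfers are transitive in towers, the transfer $I_q^d L \to I_q^d K$ is the composite of the transfers $I_q^d L \to I_q^d L_0$ and $I_q^d L_0 \to I_q^d K$. By \Cref{lem:transfer-insep-iso} the first of these is an isomorphism, and by functoriality of transfers it intertwines the $\operatorname{Aut}(L/K)$-action on $I_q^d L$ with the $\Gal(L_0/K)$-action on $I_q^d L_0$ (here, if $L/K$ is normal then so is $L_0/K$, and restriction gives an isomorphism $\operatorname{Aut}(L/K) \xrightarrow{\sim} \Gal(L_0/K)$ since $L/L_0$ is purely inseparable). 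Hence both statements for $L/K$ follow from the corresponding statements for $L_0/K$, and we may assume from now on that $L/K$ is separable, and for the kernel statement that $L/K$ is Galois with group $\bar G := \Gal(L/K)$.

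\emph{Passage to cohomology.} In both characteristics one has $I_q^d K \cong H^c(G_K, A)$ for a suitable integer $c \geq 1$ and a $2$-torsion $G_K$-module $A$ with $\cd_2(G_K) \leq c$, and under this identification the transfer $I_q^d L \to I_q^d K$ corresponds to the corestriction $\operatorname{cor}\colon H^c(G_L, A) \to H^c(G_K, A)$ (note $K^{\mathrm{sep}} = L^{\mathrm{sep}}$, so $G_L$ is an open subgroup of $G_K$ and $A$ restricts to the corresponding module over $L$). Indeed, if $\charac(K) \neq 2$ take $c = d$ and $A = \zz/2\zz$: then $I_q^d K \cong H^d(K, \zz/2\zz)$ by the resolved Milnor conjecture \cite[Fact 16.2]{ElmanKarpenkoMerkurjev}, $\cd_2(K) = \dim_2 K = d$ by definition, and the transfer corresponds to corestriction by the standard compatibility (cf.\ \Cref{P:cdCharacterisation} and \cite{Arason_CohomInvarianten}). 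If $\charac(K) = 2$ take $c = 1$ and $A = I^{d-1}K^{\mathrm{sep}}$: then $I_q^d K \cong H^1(K, I^{d-1}K^{\mathrm{sep}})$ by \Cref{lem:kato-isos-Ir}, the module $I^{d-1}K^{\mathrm{sep}}$ is $2$-torsion because the Witt ring of any field of characteristic $2$ is $2$-torsion, one has $\cd_2(G_K) \leq 1$, and the transfer corresponds to corestriction by \Cref{lem:transfer-char2-filtered-cor}.

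\emph{The top-degree argument.} Put $G = G_K$, $H = G_L$ and consider the short exact sequence of $G$-modules $0 \to J \to \operatorname{Ind}_H^G A \to A \to 0$ in which the second map is the counit; by Shapiro's lemma it induces on $H^c(G,-)$ precisely the corestriction $H^c(H, A) \to H^c(G, A)$. As $A$, hence $\operatorname{Ind}_H^G A$ and $J$, are $2$-torsion and $\cd_2(G) \leq c$, the long exact sequence forces $H^{c+1}(G, J) = 0$, so $\operatorname{cor}$ is surjective; this gives that the image of the transfer is $I_q^d K$. If moreover $L/K$ is Galois, then $H \trianglelefteq G$ with $G/H = \bar G$, and the long exact sequence yields $\ker(\operatorname{cor}) = \operatorname{im}\bigl(H^c(G, J) \to H^c(H, A)\bigr)$. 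By the projection formula $J \cong A \otimes_{\zz} I_{\bar G}$, where $I_{\bar G}$ is the augmentation ideal of $\zz[\bar G]$ with $G$ acting diagonally; since $I_{\bar G}$ is generated as a $\zz[\bar G]$-module by the elements $\bar\sigma - 1$, there is a surjection of $G$-modules $\bigoplus_{\sigma \in \bar G} \operatorname{Ind}_H^G A \to J$ with $2$-torsion kernel, so (again using $\cd_2(G) \leq c$) the induced map on $H^c(G,-)$ is surjective. Tracking these identifications, and using that Shapiro's lemma matches the module structure coming from $\bar G$ acting on $\zz[\bar G]$ with the natural $\bar G$-action on $H^c(H, A)$, one finds that the image of $H^c(G, J)$ in $H^c(H, A)$ is generated by the images of the maps $x \mapsto \sigma x - x$ for $\sigma \in \bar G$. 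Transporting back through the natural, hence $\bar G$-equivariant, isomorphism $I_q^d L \cong H^c(H, A)$ shows that the kernel of the transfer is generated by the elements $\sigma q - q$ with $q \in I_q^d L$ and $\sigma \in \Gal(L/K)$, as claimed.

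I expect the main obstacle to be the uniform cohomological reformulation in the second step — in particular, isolating that characteristic $2$ must be treated in cohomological degree $1$ with the twisted coefficient module $I^{d-1}K^{\mathrm{sep}}$; once \Cref{lem:kato-isos-Ir} and \Cref{lem:transfer-char2-filtered-cor} are available this parallels the classical picture in characteristic $\neq 2$. The remaining homological algebra (the counit sequence, the projection formula, and the Shapiro-lemma bookkeeping of the $\bar G$-action) is routine.
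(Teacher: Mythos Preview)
Your argument is correct and follows the same route as the paper: peel off the purely inseparable part via \Cref{lem:transfer-insep-iso}, identify the transfer on $I_q^d$ with corestriction in Galois cohomology (degree $d$ with $\zz/2\zz$-coefficients if $\charac K \neq 2$, degree $1$ with $I^{d-1}K^{\mathrm{sep}}$-coefficients if $\charac K = 2$), and exploit that $\cd_2$ of the absolute Galois group is at most the cohomological degree in question. The only differences are cosmetic: the paper places the purely inseparable step at the \emph{bottom} of the tower (so $\operatorname{Aut}(L/K)=\Gal(L/L_0)$ and no equivariance of the inseparable transfer is needed, though your equivariance claim is correct via the uniqueness in \Cref{lem:transfer-unique}), and it cites \cite[Proposition 3.3.11]{Neu15} for the top-degree corestriction statement that you instead prove inline via the counit sequence $0\to J\to \operatorname{Ind}_H^G A\to A\to 0$ and Shapiro's lemma.
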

\begin{proof}
  Since every finite field extension embeds into a finite normal extension,
  and the transfer is functorial and unique by \Cref{lem:transfer-unique}, it suffices to prove everything in the
  normal case.
  Since a normal extension $L/K$ decomposes as a Galois extension $L/L_0$ and
  a purely inseparable extension $L_0/K$, which we analysed in \Cref{lem:transfer-insep-iso},
  we may assume that $L/K$ is a finite Galois extension.

  Let us first suppose that $\charac K \neq 2$.
  By the Milnor Conjectures (Theorem of Orlov--Vishik--Voevodsky, see e.g.~\cite[Fact 16.2]{ElmanKarpenkoMerkurjev}),
  there is an isomorphism of $I_q^d K$ with $H^d(K, \Z/2)$, and similarly for $L$.
  By \autocite[Satz 4.18]{Arason_CohomInvarianten}, under these isomorphisms
  the transfer corresponds to the corestriction map
  $\operatorname{cor} \colon H^d(L, \Z/2) \to H^d(K, \Z/2)$.
  Since $d = \cd_2(K)$ by assumption, this corestriction is surjective
  and its kernel is generated by expressions of the required form
  by \autocite[Proposition 3.3.11]{Neu15}.
  
  We now consider the case of characteristic $2$.
  By \Cref{lem:transfer-char2-filtered-cor},
  instead of the transfer $I_q^d L \to I_q^d K$ we may consider the corestriction
  $\operatorname{cor} \colon H^1(L, I^{d-1}\Omega) \to H^1(K, I^{d-1}\Omega)$,
  where $\Omega$ is a separable closure of $L$.
  This corestriction is surjective,
  and its kernel is generated by elements of the required form \autocite[Proposition 3.3.11]{Neu15}.
\end{proof}

\begin{opm}\label{opm:aravire-baeza}
  For a finite separable extension $L/K$ in characteristic $2$,
  the fact that the image of the transfer map $I_q^d L \to I_q K$ is
  precisely $I_q^d K$ was also shown in \cite[Theorem~2.7]{Aravire-Baeza_v-invariant}
  without cohomological methods.
  There it is even shown that this holds for any integer $d$ (without assumption on $\dim_2 K$),
  for the transfer map $s^\ast$ associated to
  an arbitrary non-zero $K$-linear $s \colon L \to K$.
\end{opm}

We need the following result on compatibility of transfer and residue maps.
Recall here that for $\zz$-valued fields $(K,v)$ and $(L, w)$ with $K \subseteq L$,
we say that $w$ \emph{lies over} $v$ if $\mathcal{O}_w \cap K = \mathcal{O}_v$.
In this situation, the \emph{ramification index} $e(w/v)$ is the positive natural number
such that $w(x) = e(w/v) \cdot v(x)$ for all $x \in K$.
\begin{lem}\label{lem:residue-transfer-commute}
Let $(K,v)$ be a $\zz$-valued field, $L/K$ a finite extension,
and $w_1, \dotsc, w_r$ the finitely many $\zz$-valuations of $L$ lying over $v$.
Suppose that $[L : K] = \sum_{i=1}^r [Lw_i : Kv] \cdot e(w_i/v)$
(this is sometimes called the \emph{fundamental equality}).
Assume that $1 \leq d = \dim_2(Kv) < \infty$.
Then the diagram
  \[ \xymatrix{
      I_q^{d+1} L \ar[r] \ar[d] & \bigoplus_{i=1}^r I_q^d (Lw_i) \ar[d] \\
      I_q^{d+1} K \ar[r] & I_q^d (Kv)
    }\]
commutes, where the vertical arrows are (sums of) transfer maps,
the bottom horizontal arrow is the residue homomorphism with respect to $v$,
and the top horizontal arrow is the sum of the residue homomorphisms with respect to the $w_i$.
\end{lem}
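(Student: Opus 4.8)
The plan is to reduce first to the case of a henselian base field with a single prolongation of the valuation, and then to classes of $(d+1)$-fold Pfister forms, for which the explicit descriptions in \Cref{P:Pfister-semilocal-presentation}, \Cref{P:Pfisterresidue} and \Cref{L:transfer-unimodular} become available.

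\emph{Reduction to the henselian case.}
A dimension count over $K_v$ shows that the assumed fundamental equality forces $L \otimes_K K_v \cong \prod_{i=1}^r L_{w_i}$, the product of the henselisations, each $L_{w_i}/K_v$ being a finite extension that again satisfies the fundamental equality and has residue field $Lw_i$ (so $\dim_2(Lw_i) = \dim_2(Kv) = d$).
Since $\partial_v$ and the $\partial_{w_i}$ factor through restriction to the respective henselisations (by \Cref{C:residueHomomorphism}), and since the transfer along $L/K$ followed by restriction to $K_v$ equals the sum of the transfers along the $L_{w_i}/K_v$ preceded by restriction from $L$ (a standard base-change property of Scharlau transfers applied to the decomposition above), it suffices to treat the case where $(K,v)$ is henselian and $w := w_1$ is the unique prolongation of $v$ to $L$.
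In that case we choose the $K$-linear functional $s \colon L \to K$ so that $s(\mathcal{O}_w) \subseteq \mathcal{O}_v$, $s(\mathfrak{m}_w) \subseteq \mathfrak{m}_v$ and the induced map $\ovl{s} \colon Lw \to Kv$ is nonzero, which is possible since $\mathcal{O}_w$ is $\mathcal{O}_v$-free with a basis adapted to the $\mathfrak{m}_w$-adic filtration.
By \Cref{lem:transfer-unique} (note $I_q^{d+1}(Kv) = 0$) the bottom and right-hand maps of the diagram do not depend on these choices, and $\ovl{s}^\ast$ represents the transfer $I_q^d(Lw) \to I_q^d(Kv)$.

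\emph{Reduction to Pfister forms; the unramified case.}
Since $I_q^{d+1}L$ is generated by classes of $(d+1)$-fold Pfister forms and every map in the diagram is additive, it suffices to treat $[q]$ for $q$ a $(d+1)$-fold Pfister form.
Because $\dim_2(Lw) = d$, the dyadic hypothesis of \Cref{P:Pfister-semilocal-presentation} holds for the single valuation $w$, so we may write $q \cong \llangle a_1, \ldots, a_d, b ]]_L$ with $w(a_1) = \cdots = w(a_{d-1}) = w(b) = w(1+4b) = 0$.
If $w(a_d)$ is even, the residue form $\llangle \ovl{a_1}, \ldots, \ovl{a_d}, \ovl{b} ]]_{Lw}$ is a $(d+1)$-fold Pfister form with trivial class in $I_q^{d+1}(Lw) = 0$, hence is isotropic, and then \Cref{P:Pfisterresidue}\,(1) gives that $q$ itself is isotropic, so $[q] = 0$ and both sides vanish. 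Thus the case $w(a_d)$ even is vacuous.

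\emph{The ramified case and the main obstacle.}
So $w(a_d)$ is odd; absorbing an even part into a square we take $a_d = \pi$ a uniformiser of $w$ and write $q \cong q_0 \perp \langle -\pi\rangle q_0$, where $q_0 := \llangle a_1, \ldots, a_{d-1}, b ]]_L$ is anisotropic (else $[q]=0$) and unimodular over $(L,w)$.
By \Cref{P:Pfisterresidue}\,(2) we have $\partial_w[q] = [\RF{1}{q_0}]$ with $\RF{1}{q_0} = \llangle \ovl{a_1}, \ldots, \ovl{a_{d-1}}, \ovl{b} ]]_{Lw}$, so $\ovl{s}^\ast(\partial_w[q]) = \ovl{s}^\ast([\RF{1}{q_0}])$; applying \Cref{L:transfer-unimodular} to the unimodular form $q_0$ and to $s$, the anisotropic part $q_0'$ of $s^\ast(q_0)$ over $K$ is unimodular with $[\RF{1}{q_0'}] = \ovl{s}^\ast([\RF{1}{q_0}])$, so the clockwise composite sends $[q]$ to $[\RF{1}{q_0'}]$.
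The remaining and genuinely delicate point — the one I expect to be the crux — is to show $\partial_v(s^\ast[q]) = [\RF{1}{q_0'}]$, i.e.\ to compute the $v$-residue of the transfer of the ramified (non-unimodular) form $q$, to which \Cref{L:transfer-unimodular} does not directly apply.
I would argue via the $\mathcal{O}_w$-lattice $M := V_0 \oplus V_0$, where $V_0 \subseteq V_{q_0}$ is the unimodular lattice of $q_0$: $M$ is also a full $\mathcal{O}_v$-lattice with $(s^\ast q)(M) \subseteq \mathcal{O}_v$ and $((s^\ast q)|_M)_K \cong s^\ast q$; over $Lw$ the reduction $\overline{q|_M}$ is $\RF{1}{q_0}$ together with a totally singular summand arising from the $\pi$-component; applying $\ovl{s}$ and invoking the structure theory of quadratic forms over the discrete valuation ring $\mathcal{O}_v$ then identifies $\partial_v(s^\ast[q])$ with the Witt class of the non-singular anisotropic part of $\ovl{s}^\ast(\RF{1}{q_0})$, which is $[\RF{1}{q_0'}]$ by \Cref{L:transfer-unimodular}.
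A more structural alternative is to refine $L/K$ into a tower of simple extensions — treating purely inseparable steps with \Cref{lem:transfer-insep-iso} and the explicit residue formulas, residually-separable unramified steps by arranging $a_d \in K$ and using the projection formula $s^\ast(q_0 \otimes \langle 1, -a_d\rangle) = (s^\ast q_0) \otimes \langle 1, -a_d\rangle$, and residually-constant ramified steps by observing that a uniformiser of the base becomes a unit times a square above — but either way the characteristic-$2$ bookkeeping (singular residue forms, Springer decompositions over dyadic henselian fields) is where most of the care is needed.
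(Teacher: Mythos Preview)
Your overall architecture --- reduce to a local base with a single extension, then to $(d+1)$-fold Pfister forms in the normalised shape of \Cref{P:Pfister-semilocal-presentation} --- matches the paper's. The reduction to the local case is essentially the same (the paper passes to the completion rather than the henselisation, citing Bourbaki for the tensor decomposition, but this is a minor variation). The vacuity of the case $w(a_d)$ even is also correct.

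The genuine gap is in the ramified case, which you correctly identify as the crux but do not resolve. Your lattice argument via $M = V_0 \oplus V_0$ cannot be made to work as stated: the form $s \circ q|_M$ over $\mathcal{O}_v$ is \emph{not} non-singular --- the $\pi$-scaled block reduces modulo $\mathfrak{m}_v$ to a form whose polar form has nontrivial radical --- so the splitting results for quadratic forms over DVRs that underlie \Cref{L:transfer-unimodular} (e.g.\ \cite[Corollary~19.3]{ElmanKarpenkoMerkurjev}) are unavailable. In characteristic~$2$ there is no clean way to discard a ``totally singular summand'' at the level of Witt classes, and this is precisely why the paper never attempts a direct residue computation for a form with a genuinely ramified slot.

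The paper's resolution is organised by the \emph{extension type} rather than by the slot parity, and (after passing to the completion) goes through three cases. When $e(w/v)=1$ one may take $a_d \in K$ a uniformiser for $v$, and Frobenius reciprocity pulls the bilinear factor $\llangle a_d \rrangle^b_K$ outside the transfer, reducing to \Cref{L:transfer-unimodular} applied to the unimodular $q_0$. When $Lw/Kv$ is purely inseparable of odd degree (this includes the totally ramified case $Lw=Kv$), one first observes that the residue form $\llangle \ovl{a_1},\dotsc,\ovl{a_{d-1}},\ovl{b}]]_{Lw}$ is already defined over $Kv$, then uses the norm formula $s^\ast([\llangle a_d \rrangle^b_L]) = [\llangle N_{L/K}(a_d) \rrangle^b_K]$ together with the fact that $v(N_{L/K}(a_d)) = [Lw:Kv]$ is odd. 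When $Lw/Kv$ is purely inseparable of even degree (forcing $\charac(Kv)=2$), a $2$-Sylow argument on a normal closure breaks the extension into a tower of quadratic and odd-degree steps, each covered by the previous two cases, and surjectivity of the transfer (\Cref{lem:transfer-surj-kernel}) then yields the result for the original extension. Your sketch of a tower decomposition gestures in this direction, but the step types you list do not line up: your ``residually-constant ramified'' case with odd $e(w/v)$ is exactly where the norm computation is needed, and no uniformiser becomes a unit times a square there.
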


\begin{opm}\label{opm:residue-transfer-commute-nondyad}
  Although we give a full proof below in all cases,
  we observe straight away that \Cref{lem:residue-transfer-commute}
  for a non-dyadic valuation $v$ is given by \cite[Satz 3.4]{Arason_CohomInvarianten}
  (and our proof below shares some similarities with the proof there).
  This non-dyadic case suffices to establish \Cref{T:Reciprocity-Main} in characteristic not $2$.
  Furthermore, the hypothesis that the fundamental equality holds may be omitted in this case.

  See also \cite[Corollary 7.4.3]{GilleSzamuely_2nd} for a statement in Milnor $K$-theory
  which implies \Cref{lem:residue-transfer-commute} for non-dyadic valuations
  (using the correspondence of residue maps of Milnor $K$-groups and quadratic forms
  discussed in \Cref{rem:residue-K-cohom}).
\end{opm}

\begin{proof}[Proof of \Cref{lem:residue-transfer-commute}]
Let us first assume that $K$ is complete with respect to $v$.
In this case $v$ is henselian, and there is precisely one $\zz$-valuation $w = w_1$ of $L$ lying over $v$,
the fundamental equality $[L : K] = [Lw : Kv] \cdot e(w/v)$ always holds,
and $\mc{O}_w$ is a free $\mc{O}_v$-module
\cite[Chapitre 6, §8, No 5, Corollaire 2 to Théorème 2]{Bou06}.
We consider three special cases.

\textit{Case 1: Assume that $e(w/v) = 1$}, i.e.~a uniformiser for $K$ is also a uniformiser for $L$.
Since $\mc{O}_w$ is a free $\mc{O}_v$-module,
we can find an $\mc{O}_v$-linear map $s : \mc{O}_w \to \mc{O}_v$ such that $\ovl{s} : Lw \to Kv : x \mapsto \ovl{x}$ is non-zero.
We can extend $s$ to a $K$-linear map $L \to K$, which we also denote by $s$.
By \Cref{L:transfer-unimodular} we have for every unimodular anisotropic quadratic form $q$ over $L$ that transfer and residue maps commute.
Now consider an arbitrary anisotropic $(d+1)$-fold Pfister form $q$ over $L$.
In view of \Cref{C:Pfisterresidue} we have $[q] = [q'] - [\pi q'] = [\llangle \pi \rrangle^b_L \otimes q']$ for some unimodular form $q'$ and a uniformiser $\pi$, which by the assumption $e(w/v) = 1$ we may choose to lie in $K$. 
Denoting by $\tilde{q}$ the anisotropic form in the Witt equivalence class $s^\ast [q']$ (which is unimodular in view of \Cref{L:transfer-unimodular}), we compute that
\begin{align*}
\partial_v s^\ast[q] &= \partial_v s^\ast [\llangle \pi \rrangle^b_L \otimes q'] = \partial_v([\llangle \pi \rrangle^b_K] \otimes s^\ast [q'])  \\
&= \partial_v([\llangle \pi \rrangle^b_K] \otimes [\tilde{q}]) = [\RF{1}{\tilde{q}}] = \ovl{s}^\ast [\RF{1}{q'}] = \ovl{s}^\ast \partial_w [q],
\end{align*}
where in the second equality we applied Frobenius Reciprocity \cite[Proposition 20.2(20.3b)]{ElmanKarpenkoMerkurjev}, in the fourth we applied \Cref{C:Pfisterresidue}, and in the fifth we invoked \Cref{L:transfer-unimodular}.

\textit{Case 2: Assume that $Lw/Kv$ is purely inseparable of odd degree}.
Let $q$ be an anisotropic $(d+1)$-fold Pfister form over $L$; as before, we may assume $q = \llangle a_1, \ldots, a_d, b]]_L$ with $w(a_1) = \ldots = w(a_{d-1}) = w(b) = w(1+4b) = 0$, and that $a_d$ is a uniformiser for $w$.
By \Cref{C:residueHomomorphism}, the isometry class of $q$ is completely determined by that of its residue form $\RF{1}{q} \cong \llangle \ovl{a_1}, \ldots, \ovl{a_{d-1}}, \ovl{b}]]_{Lw}$ (recall that we still assume that $v$ is complete, in particular henselian, whereby the same holds for $w$).
The assumption that $Lw/Kv$ is purely inseparable of odd degree implies that this residue form is actually defined over $Kv$: indeed, either $Kv = Lw$ and there is nothing to show, or
$p = \charac(Kv)$ is odd and $[Lw: Kv] = p^k$ for some $k \in \nat$,
and so $\llangle \ovl{a_1}, \ldots, \ovl{a_{d-1}}, \ovl{b}]]_{Lw} \cong \llangle \ovl{a_1}^{p^k}, \ldots, \ovl{a_{d-1}}^{p^k}, \ovl{b}^{p^k}]]_{Lw}$.
Since $\beta^{p^k} \in Kv$ for every $\beta \in Lw$, we may thus as well assume $a_1, \ldots, a_{d-1}, b \in \mc{O}_v$.

We now use the fact that $s^\ast([\llangle a \rrangle^b_F]) = [\llangle N_{F/E}(a) \rrangle^b_E]$ for every field extension $F/E$, $a \in F^\times$, and well-chosen functional $s$ \cite[Corollary 20.13]{ElmanKarpenkoMerkurjev};
furthermore, observe that $v(N_{L/K}(a_d)) = [Lw : Kv]$ \cite[Chapitre II, §2, Corollaire 4]{CorpsLocaux} is odd.
We now use again Frobenius Reciprocity \cite[Proposition 20.2(20.3c)]{ElmanKarpenkoMerkurjev} and \Cref{P:Pfisterresidue} to compute:
\begin{multline*}
  \partial_v(s^\ast([q]))
  = \partial_v(s^\ast([\llangle a_d \rrangle^b_L \otimes \llangle a_1, \ldots, a_{d-1}, b]]_L]) \\
  = \partial_v([\llangle N_{L/K}(a_d) \rrangle^b_K \otimes \llangle a_1, \dotsc, a_{d-1}, b]]_K])
  = [\llangle \ovl{a_1}, \dotsc, \ovl{a_{d-1}}, \ovl{b} ]]_{Kv} ]
\end{multline*}
On the other hand, we also have
\[ \ovl{s}^\ast(\partial_w [q])
  = \ovl{s}^*([ \llangle \ovl{a_1}, \dotsc, \ovl{a_{d-1}}, \ovl{b}]]_{Lw}])
  = [\llangle \ovl{a_1}, \dotsc, \ovl{a_{d-1}}, \ovl{b} ]]_{Kv} ],\]
where the last equality is trivial if $Lw = Kv$ (for instance if $\charac(Kv) = 2$),
and otherwise follows from the fact that transfer and restriction are inverse maps
(\Cref{lem:transfer-insep-iso}).

\textit{Case 3: Assume that $Lw/Kv$ is purely inseparable of even degree}.
In this case $\charac(Kv) = 2$.
Let $M/K$ be a normal closure of $L/K$ and extend $w$ to $M$.
By a standard Galois theory argument using the fixed field of a $2$-Sylow subgroup of $\operatorname{Aut}(M/K)$,
we may decompose $M/K$ as a tower of field extensions $K = K_0 \subseteq K_1 \subseteq \ldots \subseteq K_m = M$ where each $K_{i+1}/K_i$ is either quadratic or of odd degree.
Each odd degree extension among the $K_{i+1}/K_i$ can be split into an unramified and a totally ramified part
\cite[Chapitre III, §5, Corollaire 3 to Théorème 3]{CorpsLocaux},
so we may assume that every extension $K_{i+1}/K_i$ falls under either case 1 or case 2
(this is automatic for the quadratic extensions).
In particular, we know the the induced diagram commutes for each of these.
By functoriality of the transfer maps, the same then holds for the extension $M/K$, and by the surjectivity of the transfer map $I^{d+1}_qM \to I^{d+1}_qL$ (\Cref{lem:transfer-surj-kernel}) we conclude that also for $L/K$ the diagram commutes.

\textit{General case for $(K,v)$ complete:}
The extension of complete $\zz$-valued fields $L/K$ has a maximal unramified subextension.
This is an intermediate field $K'$, with $\zz$-valuation $v'$ lying under $w$,
such that $K'/K$ is unramified (i.e.~$e(v'/v) = 1$ and $K'v'/Kv$ is separable)
and $Lw/K'v'$ is purely inseparable
\cite[Chapitre III, §5, Corollaire 3 to Théorème 3]{CorpsLocaux}.
The result thus follows from the three special cases mentioned above and functoriality of the transfer maps.

\emph{We now drop the condition that $(K,v)$ is complete.}
We denote by $\widehat{K}$ the completion of $K$ with respect to $v$,
and by $\widehat{L}_i$ the completion of $L$ with respect to $w_i$, where $i = 1, \dotsc, r$.
We continue to write $v$ and $w_i$ for the natural prolongations of these valuations to the corresponding completions.
By the first part of the proof, we have a commutative diagram
  \[ \xymatrix{
      \bigoplus_{i = 1}^r I_q^{d+1} \widehat{L}_i \ar[r] \ar[d] & \bigoplus_{i=1}^r I_q^d (Lw_i) \ar[d] \\
      I_q^{d+1} \widehat{K} \ar[r] & I_q^d (Kv)
    }\]
where the vertical arrows are (sums of) transfer maps
and the horizontal arrows are (sums of) residue maps.
We furthermore have a commutative diagram
\[ \xymatrix{
    I_q^{d+1} K \ar[r] \ar[d] & I_q^d (Kv) \\
    I_q^{d+1} \widehat{K} \ar[ur]
  }\]
where the horizontal and diagonal arrows are residue maps with respect to $v$
and the vertical arrow is restriction.
Here commutativity can be immediately read off from the concrete description of residue maps in \Cref{P:Pfisterresidue}
(which characterises residue maps by \Cref{P:Pfister-semilocal-presentation}).
We have the analogous diagram for each $(L,w_i)$ in place of $(K,v)$.

Let us now argue that the diagram
\[ \xymatrix{
    I_q^{d+1} L \ar[r] \ar[d] & \bigoplus_{i=1}^r I_q^{d+1} \widehat{L}_i \ar[d] \\
    I_q^{d+1} K \ar[r] & I_q^{d+1} \widehat{K}
  }\]
commutes,
where the vertical maps are (sums of) transfers and the horizontal maps are (sums of) restrictions.
Let $s \colon L \to K$ be a non-zero $K$-linear map.
We obtain an induced $\widehat{K}$-linear map $\widehat{s} \colon \widehat{K} \otimes_K L \to \widehat{K}$.
By \cite[Chapitre 6, §8, No 5, Corollaire 3 to Théorème 2]{Bou06}
(using the fundamental equality),
we have a canonical $K$-algebra isomorphism $\widehat{K} \otimes_K L \to \prod_{i=1}^r \widehat{L}_i$,
induced by the natural $K$-linear embeddings of $\widehat{K}$ and $L$ into each $\widehat{L}_i$.
Therefore $\widehat{s}$ yields a collection of $\widehat{K}$-linear maps $s_i \colon \widehat{L}_i \to \widehat{K}$, $i = 1, \dotsc, r$.
For sufficiently generic choice of $s$, none of the $s_i$ will be zero:
This is a simple matter of ensuring that the kernel of $\widehat{s}$ does not contain the preimage of any of the $\widehat{L}_i$.
It is now routine to check
(cf.\ \cite[Satz 2.2]{Arason_CohomInvarianten} in characteristic not $2$)
that starting with a quadratic form $q$ over $L$,
we obtain the same quadratic form over $\widehat{K}$ by first transferring to $K$ using $s$ and then restricting to $\widehat{K}$,
or by first restricting to $\widehat{L}_i$, transferring to $\widehat{K}$ using $s_i$, and summing over all $i$.

We have now proven that in the diagram
\[ \xymatrix{
    I_q^{d+1} L \ar[rr] \ar[rd] \ar[ddd] & & \bigoplus_{i=1}^r I_q^d (Lw_i) \ar[ddd] \\
    & \bigoplus_{i=1}^r I_q^{d+1} \widehat{L}_i \ar[ur] \ar[d] & \\
    & I_q^{d+1} \widehat{K} \ar[dr] & \\
    I_q^{d+1} K \ar[rr] \ar[ur] & & I_q (Kv)
  }\]
(where each arrow is a transfer map, restriction map, residue map, or sum thereof)
the left and right trapezium commute, and the top and bottom triangle commute.
Therefore the outside square commutes, which was to be shown.
\end{proof}

This finishes our preparations concerning the transfer map.
The following lemma on rational function fields will be used to show that the maps from \Cref{T:Reciprocity-Main}
form a complex in characteristic $2$.
The analogous result in characteristic away from $2$ is standard
\autocite[Corollary 21.7]{ElmanKarpenkoMerkurjev} and needs no hypothesis on $d$.
\begin{lem}\label{lem:Milnor-seq-char-2}
Let $K$ be a field of characteristic $2$, $F = K(T)$,
and suppose that $d = \dim_2 K$ satisfies $1 \leq d < \infty$.
Then the sequence
$$ I^{d+1}_q F \xrightarrow{\oplus \partial_v} \bigoplus_{v \in \mathcal{V}(F/K)} I^{d}_q (Fv) \xrightarrow{\oplus s_v^\ast} I^d_q K $$
is a complex,
where the first arrow is the sum of residue maps as defined in \Cref{C:residueHomomorphism},
and the second arrow is the sum of transfer maps.
\end{lem}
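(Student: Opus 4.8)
The plan is to reduce to the classical characteristic-not-$2$ result by a base change that kills the characteristic obstruction, in the spirit of how residue and transfer maps are handled throughout this section. The fundamental difficulty in characteristic $2$ is that we cannot directly invoke \autocite[Corollary 21.7]{ElmanKarpenkoMerkurjev}; instead, by \Cref{lem:kato-isos-Ir} we may identify $I_q^d K \cong H^1(K, I^{d-1}K^{\mathrm{sep}})$ and similarly $I_q^d(Fv) \cong H^1(Fv, I^{d-1}(Fv)^{\mathrm{sep}})$ and $I_q^{d+1}F \cong H^1(F, I^d F^{\mathrm{sep}})$, and these isomorphisms are compatible with the inclusions $I^d \hookrightarrow I^{d-1}$ of bilinear Witt ring filtration steps. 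Under these identifications the residue maps and transfer maps become (up to the compatibility in \Cref{lem:transfer-char2-filtered-cor}) the residue and corestriction maps in Galois cohomology of the coefficient modules $I^{d-1}(-)^{\mathrm{sep}}$. The standard Weil/Milnor reciprocity for the rational function field---that the sum of residues at all places followed by corestriction to $K$ vanishes---then applies with these twisted coefficients, since the coefficient modules behave functorially.

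First I would fix an element $\alpha \in I^{d+1}_q F$; by \Cref{P:Pfister-semilocal-presentation} (applied to any finite set of places at which we need control) it suffices to treat $\alpha = [q]$ for $q \cong \llangle a_1, \dots, a_d, b]]_F$ a $(d+1)$-fold Pfister form, since the classes of such forms generate $I^{d+1}_q F$ and the maps in question are additive. For all but finitely many $v \in \Val(F/K)$ the form $q$ has good reduction, so $\partial_v[q] = 0$ by \Cref{P:Pfisterresidue}\eqref{it:PfisterResidueUnramified}, hence the sum $\bigoplus_v \partial_v[q]$ has finite support and the composite is well-defined. Next I would translate the statement through \Cref{lem:kato-isos-Ir}: writing $M = I^{d-1}F^{\mathrm{sep}}$ as a Galois module, the class of $q$ corresponds to an element of $H^1(F, I^d F^{\mathrm{sep}}) \subseteq H^1(F, M)$ (using the compatibility of the isomorphisms for $d+1$ and $d$), its residue $\partial_v[q]$ corresponds to the arithmetic residue $\partial_v$ in $H^0(Fv, M)$ or rather in the appropriate subquotient, and the transfer $s_v^\ast$ corresponds to corestriction by \Cref{lem:transfer-char2-filtered-cor}. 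What remains is exactly the statement that in the complex of Galois cohomology of the function field $F = K(T)$ with coefficients in the constant (i.e., pulled back from $K$) module $I^{d-1}K^{\mathrm{sep}}$, the composite "sum of residues, then corestriction to $K$" is zero.

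The main obstacle---and the place where I would spend the most care---is establishing this last cohomological reciprocity in the twisted, constant-coefficient, characteristic-$2$ setting, since the literature usually phrases Weil reciprocity for $\mu_p^{\otimes n}$ coefficients or for Milnor $K$-theory. I expect the cleanest route is to deduce it from the geometry of $\mathbb{P}^1_K$: the residue maps fit into a Bloch--Ogus / Gersten-type complex for the sheaf associated to $I^{d-1}(-)^{\mathrm{sep}}$ on $\mathbb{P}^1_K$, whose two-term tail computes exactly "$H^1$ of the generic point" $\to$ "$\bigoplus$ over closed points of $H^0$ of the residue fields" $\to$ "(via the pushforward/corestriction to the base) $H^1(K, -)$", and exactness of the relevant piece is equivalent to the complex property we want. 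Alternatively, and perhaps more elementary, one can verify the complex property by a direct degree-by-degree argument: decompose any Pfister form over $K(T)$ into a "constant part" (defined over $K$, contributing only the residue at $v = \infty$ which then corestricts back to itself) plus terms coming from monic irreducible polynomials, and check that for an irreducible polynomial $p(T)$ the residue at the place $v_p$, transferred from $Fv_p = K[T]/(p)$ back to $K$, is cancelled by the contribution of the $v = \infty$ residue via the norm/product formula $\prod_p p(T)^{\deg} \sim T^{\deg}$ up to constants---this is the characteristic-$2$ analogue of the argument behind \autocite[Corollary 21.7]{ElmanKarpenkoMerkurjev}, and the dimension hypothesis $\dim_2 K = d$ is what guarantees all the relevant $(d+1)$-fold forms have the controlled shape needed for \Cref{P:Pfisterresidue} to apply. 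I would present whichever of these two arguments is shorter given the tools already developed, likely the second, reducing everything to \Cref{P:Pfisterresidue}, \Cref{lem:residue-transfer-commute} (with $K$ in place of the base, applied to the residue field extensions $Fv/K$), and the elementary identity of norms of monic polynomials over $K(T)$.
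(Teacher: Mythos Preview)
Your approach differs substantially from the paper's. The paper does not pass through the Kato isomorphisms or attempt a Milnor-style direct computation; it quotes the characteristic-$2$ exact sequence of Aravire--Jacob for the full quadratic Witt group $I_q(K(T))$, and then embeds the sequence of the lemma into that one via an explicit injection $\iota \colon I_q^d(Fv) \hookrightarrow W_1 \widehat{F_v}$, checking commutativity of both squares by hand. The role of the hypothesis $\dim_2 K = d$ in the paper's argument is to force the ``wild'' component $[q^\ast]$ of the Aravire--Jacob decomposition to vanish for $(d+1)$-fold Pfister forms, so that $\iota$ lands in the tame part and the squares commute.

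Your proposal, by contrast, has real gaps. First, the assertion that after applying \Cref{lem:kato-isos-Ir} the coefficient module becomes ``constant, i.e.\ pulled back from $K$'' is not correct: one has $[F^{\mathrm{sep}} : (F^{\mathrm{sep}})\pow{2}] = 2^d$ but $[K^{\mathrm{sep}} : (K^{\mathrm{sep}})\pow{2}] = 2^{d-1}$, so $I^{d-1}F^{\mathrm{sep}}$ is genuinely larger than the pullback of $I^{d-1}K^{\mathrm{sep}}$, and you conflate the two between your second and third paragraphs. Second, and more seriously, you never identify the residue maps $\partial_v$ of \Cref{C:residueHomomorphism} with any map on the cohomological side for which a Weil-type reciprocity is available (your ``$H^0(Fv, M)$ or rather the appropriate subquotient'' is a placeholder, not an identification); in characteristic $2$ this compatibility is exactly where the difficulty lies and is essentially the content of the Aravire--Jacob theory. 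Third, your alternative ``direct degree-by-degree'' argument is the standard characteristic-$\neq 2$ proof, but it does not port over: the quadratic Witt group of $K(T)$ in characteristic $2$ has no Milnor-type splitting into a constant part and polynomial parts, because of wild phenomena in the Artin--Schreier slot $b$; your norm/product formula accounts only for the bilinear slots $a_i$. Finally, your appeal to \Cref{lem:residue-transfer-commute} at the end is misplaced: that lemma compares residue and transfer for a $\zz$-valuation on the base field and a finite extension thereof, whereas here the valuations live on $F$ and are trivial on $K$, so there is no valuation on $K$ to which it applies.
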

\begin{proof}
We deduce this from the results of the paper \cite{Aravire-Jacob}.
By \cite[Theorem~6.2]{Aravire-Jacob}, we have an exact sequence
\begin{equation}\label{Aravire-Jacob-sequence}
I_q F \xrightarrow{\oplus \partial_v^{AJ}} \bigoplus_{v \in \mc{V}(F/K)} W_1 \widehat{F_v} \xrightarrow{\oplus s_v^{AJ}} I_q K.
\end{equation}
We will explain the terms of the sequence below.
For our purposes, it suffices to show that we have a commutative diagram of group homomorphisms
\begin{equation}\label{eq:diagram-two-MilnorES}
\begin{tikzcd}
	{I^{d+1}_q F} & {\bigoplus_{v \in \Val(F/K)}I^d_q (Fv)} & {I^d_qK} \\
	{I_q F} & {\bigoplus_{v \in \Val(F/K)} W_1 \widehat{F_v}} & {I_q K}
	\arrow["{\oplus \partial_v^{AJ}}", from=2-1, to=2-2]
	\arrow["{\oplus s_v^{AJ}}", from=2-2, to=2-3]
	\arrow[hook, from=1-1, to=2-1]
	\arrow[hook, from=1-3, to=2-3]
	\arrow["{\oplus \partial_v}", from=1-1, to=1-2]
	\arrow["\iota"', from=1-2, to=2-2]
	\arrow["{\oplus s_v^\ast}", from=1-2, to=1-3]
\end{tikzcd}
\end{equation}
where the unnamed vertical arrows are inclusions and $\iota$ is a group homomorphism which we will construct.
Once we have this, the fact that the upper row is a complex follows from the fact that the lower row is a complex (even exact, by \eqref{Aravire-Jacob-sequence}).

Let us first focus on the left-hand square of \eqref{eq:diagram-two-MilnorES}.
For $v \in \Val(F/K)$, we write $\widehat{F_v}$ for the completion of $F$ with respect to $v$.
There exists a (non-unique) lifting $Fv \to \widehat{F_v}$,
i.e.\ an embedding of fields which is right inverse to the residue homomorphism.
This induces a homomorphism $I_q(Fv) \to I_q \widehat{F_v}$.
This homomorphism is an embedding and does not in fact depend on the choice of lifting $Fv \to \widehat{F_v}$,
since the image of the class of an anisotropic form $q$ in $I_q(Fv)$
is the class of an anisotropic unimodular form over $\widehat{F_v}$ with first residue form $q$,
and such a unimodular form is unique by \autocite[Satz~4.1]{Tietze}.
(This canonicity of $I_q(Fv) \to I_q \widehat{F_v}$ seems to have not been observed in \autocite{Aravire-Jacob},
but is given in the later \autocite[Corollary~1.3]{Aravire-Jacob_hyperelliptic} by the same authors,
using an explicit calculation.)
The group $W_1 \widehat{F_v}$ is now simply defined as the cokernel of $I_q(Fv) \to I_q \widehat{F_v}$,
and the residue map $\partial_v^{AJ} \colon I_q F \to W_1 \widehat{F_v}$ is the composition of the homomorphism
$I_q F \to I_q \widehat{F_v}$ induced by $F \hookrightarrow \widehat{F_v}$ and the canonical map $I_q \widehat{F_v} \to W_1 \widehat{F_v}$
\autocite[20]{Aravire-Jacob}.

We construct the homomorphism $\iota$ of \eqref{eq:diagram-two-MilnorES} summand-by-summand.
For $v \in \Val(F/K)$ with a fixed uniformiser $\pi$
we map $I_q^d(Fv)$ to $W_1 \widehat{F_v}$ by sending the class of a form $q$
to the class of $\pi q$ (where we see $q$ as a form on $\widehat{F_v}$ by means of a lifting $Fv \to \widehat{F_v}$,
and the choice of lifting does not matter by the above).
We note in passing that this homomorphism $I_q^d(Fv) \to W_1 \widehat{F_v}$ (and therefore also $\iota$)
is in fact injective,
as is easy to see for instance by the decomposition theorem \autocite[Theorem~1.3]{Aravire-Jacob} for $I_q \widehat{F_v}$,
but we will not use this.
Let us furthermore observe that our homomorphism $I_q^d(Fv) \to W_1 \widehat{F_v}$ does not depend on the choice of the uniformiser $\pi$.
Indeed, if $\pi'$ is another uniformiser, then we have $\partial_v([q] - [\pi q]) = \partial_v([q] - [\pi' q])$ by \Cref{C:Pfisterresidue}
and therefore $[q] - [\pi q] = [q] - [\pi' q]$ in $I_q^{d+1} \widehat{F_v}$ by \Cref{C:residueHomomorphism},
so $[\pi q] = [\pi' q]$ in $I_q \widehat{F_v}$ and a fortiori in $W_1 \widehat{F_v}$.

Let us now show that the left-hand square of \eqref{eq:diagram-two-MilnorES} commutes.
Since $I^{d+1}_q F$ is generated by the classes of anisotropic $(d+1)$-fold Pfister forms,
it suffices to consider one such form $q$.
Let $v \in \Val(F/K)$.
If $\partial_v[q] = 0$, then by \Cref{C:residueHomomorphism} the class of $q$ in $I_q \widehat{F_v}$ is zero, and since $\partial_v^{AJ}$ is defined as the composition of the natural homomorphisms $I_q F \to I_q \widehat{F_v}$ and $I_q \widehat{F_v} \to W_1 \widehat{F_v}$, also $\partial_v^{AJ}[q] = 0$.
We may thus assume from now on that $\partial_v[q] \neq 0$.
By \Cref{P:Pfister-semilocal-presentation}, we have a representation $q \cong \llangle a_1, \dotsc, a_d, b]]_F$
with $a_1, \dotsc, a_{d-1}, b \in \mathcal{O}_v^\times$ and $v(a_d) \in \{ 0, 1\}$.
By \Cref{P:Pfisterresidue} and since we assumed $\partial_v[q] \neq 0$, in fact $a_d$ is a uniformiser for $v$.
Now $[q] = [q'] - [a_d q']$ for the form $q' = \llangle a_1, \dotsc, a_{d-1}, b]]_F$.
The image of $[q]$ under $\partial_v \colon I_q^{d+1} F \to I_q^d(Fv)$ is $\llangle \overline{a_1}, \dotsc, \overline{a_{d-1}}, \overline{b}]]_{Fv}$
by \Cref{P:Pfisterresidue},
so under the component map $I_q^d(Fv) \to W_1 \widehat{F_v}$ of $\iota$ we obtain the class of the form $(-a_d) \llangle \overline{a_1}, \dotsc, \overline{a_{d-1}}, \overline{b}]]_{\widehat{F}_v}$ for any lifting $Fv \to \widehat{F_v}$.
This is identical to the class of $[q'] - [a_d q']$ in $W_1 \widehat{F_v}$,
since over $\widehat{F_v}$ both $q'$ and $\llangle \overline{a_1}, \dotsc, \overline{a_{d-1}}, \overline{b}]]_{\widehat{F_v}}$ are
anisotropic unimodular forms with the same first residue form $\llangle \overline{a_1}, \dotsc, \overline{a_{d-1}}, \overline{b}]]_{Fv}$
and therefore isometric,
and the class of $q'$ vanishes in $W_1 \widehat{F_v}$ by definition.
This proves that the left-hand square commutes.

To prove the that the right-hand square of \eqref{eq:diagram-two-MilnorES} commutes, let us take $v \in \Val(F/K)$.
We need to consider the definition of the transfer map $s_v^{AJ}$ from \autocite[Definition~4.3]{Aravire-Jacob}.
In general this is quite complicated.
However, for an element of $W_1 \widehat{F_v}$ which can be represented as $[\pi q]$ for some $[q] \in I_q(Fv)$
(again considered under the embedding $I_q(Fv) \to I_q \widehat{F_v}$) and a certain uniformiser $\pi$ of $v$,
the element $s_v^{AJ}([\pi q]) \in I_qK$ is by definition simply the usual transfer of $[q]$ with respect to
a certain $K$-linear functional $Fv \to K$.
Given our definition of $\iota$, this suffices to ensure that the right-hand square commutes.
\end{proof}

The following is a basic lemma on function fields;
we give a proof for lack of a reference.
\begin{lem}\label{lem:smoothify-function-field}
  Let $F/K$ be a regular function field in one variable.
  Then there exists a purely inseparable finite extension $K'/K$
  such that $FK'/K'$ is the function field of a smooth projective curve $C/K'$.
  In particular, for every finite extension $L/K'$, $FL/L$
  is the function field of the base-changed smooth projective curve $C_L/L$.
\end{lem}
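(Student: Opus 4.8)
The statement asserts that for a regular function field $F/K$ in one variable, there is a finite purely inseparable extension $K'/K$ such that $FK'/K'$ is the function field of a smooth projective curve, and that this property is preserved under arbitrary finite base extension. The plan is to reduce geometric regularity to smoothness by going up to a suitable inseparable extension and then invoking standard descent/base-change facts for curves.

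First I would recall the basic correspondence between function fields in one variable and curves: given $F/K$ regular of transcendence degree one, there is a unique smooth projective geometrically integral curve over $K$ with function field $F$ \emph{provided} $F$ is separably generated (equivalently, $F/K$ is separable in the sense of commuting with every base change, which is implied by "regular"). Actually the subtlety is exactly that over an imperfect $K$ a regular curve need not be smooth; the standard remedy (see e.g.\ the literature on regular vs.\ smooth curves, or Stacks project, Tag 0CDS and surroundings) is that after a finite purely inseparable extension of the base a regular curve becomes smooth. Concretely: let $C$ be the unique normal projective curve over $K$ with $K(C) = F$; since $F/K$ is regular, $C$ is geometrically integral, so $C$ is geometrically reduced, which means $C \times_K \overline{K}$ is reduced, hence regular in codimension zero, and its normalization is smooth over $\overline{K}$. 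The non-smooth locus of $C$ is a closed subset not meeting the generic point; over $\overline{K}$ it becomes empty after normalization. The key finiteness point is that $\overline{K} = K^{\mathrm{perf}} \cdot K^{\mathrm{sep}}$, and since $C$ is already regular (so geometrically reduced forces smoothness over the separable closure is not automatic — one genuinely needs the perfect closure), the locus where $C_{\overline K}$ fails to be smooth is cut out by finitely many equations with coefficients in $\overline{K}$; these coefficients lie in a finite subextension, which can be taken purely inseparable since the relevant obstruction is insepararability of the function field. Thus there is a finite purely inseparable $K'/K$ with $C_{K'}$ (or rather its normalization $C'$) smooth, and $K(C') = FK'$.

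The cleanest way to organize this is: (1) take $C'/K$ the normal projective model of $F$, geometrically integral by regularity of $F/K$; (2) note $C' \times_K K^{\mathrm{perf}}$ is a normal — hence smooth, since $K^{\mathrm{perf}}$ is perfect — projective curve; (3) smoothness is a property that descends through finite purely inseparable base change in the following sense: the formation of the singular locus commutes with base change, and over $K^{\mathrm{perf}} = \bigcup K_i$ (a filtered union of finite purely inseparable $K_i/K$) the singular locus of $C'_{K^{\mathrm{perf}}}$ is empty, so by a limit/spreading-out argument it is already empty over some finite $K' = K_i$; (4) set $C = C'_{K'}$, which is then a smooth projective curve over $K'$ with function field $FK'$. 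For the "in particular" clause: once $C/K'$ is smooth and projective, base-changing along any finite (indeed arbitrary) field extension $L/K'$ preserves smoothness and projectivity, and $K(C_L) = K(C) \otimes_{K'} L$ localized, which equals $FK' \otimes_{K'} L$'s fraction field $= FL$ because $C_L$ is geometrically integral over $L$ (smooth and geometrically connected, the latter because $FK'/K'$ is already a regular — hence geometrically integral — function field by step (1), and regularity is preserved under the purely inseparable extension $K'/K$: $FK' \otimes_{K'} \overline{L}$ is a localization of $F \otimes_K \overline{L}$ which is a domain).

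The main obstacle is being careful about \emph{why} a finite purely inseparable extension suffices — i.e., ruling out that one needs to pass up into the separable closure. This rests on the fact that a regular (not just normal) curve over a field is automatically smooth after a purely inseparable base change: the only obstruction to smoothness of a regular $K$-variety is the inseparability of its function field over $K$, which is killed by enlarging $K$ purely inseparably (one absorbs the imperfection). I would cite the standard reference for "regular $+$ geometrically reduced $\Rightarrow$ smooth after finite purely inseparable base change" (e.g.\ \cite{Liu}, or the Stacks Project) rather than reprove it, and the rest is then a routine spreading-out argument plus the stability of smoothness and geometric integrality under base change. The verification that the function field of $C_L$ is exactly $FL$ is the one genuinely computational but entirely standard point, handled by noting $C_L$ is integral with generic fiber $\operatorname{Spec}$ of the localization of $FK' \otimes_{K'} L$, which is the field $FL$ since $FK'/K'$ is regular hence $FK' \otimes_{K'} L$ is a domain.
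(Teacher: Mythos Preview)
Your step (2) contains a genuine error: normality (equivalently, regularity for curves) is \emph{not} preserved under base change, so $C' \times_K K^{\mathrm{perf}}$ need not be normal. For instance, with $p$ odd, $K = \mathbb{F}_p(t)$, and $F$ the function field of $y^2 = x^p - t$, the extension $F/K$ is regular and the affine curve is already normal over $K$ (the maximal ideal $(y)$ at the unique closed point lying over $x^p = t$ is principal), but after base change to $K^{\mathrm{perf}}$ the equation becomes $y^2 = (x - t^{1/p})^p$, which is singular at $(t^{1/p}, 0)$. So the singular locus of $C'_{K^{\mathrm{perf}}}$ is not empty, and your spreading-out argument in step (3) has nothing to spread out from. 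You were aware of this in your informal discussion (``becomes empty after normalization''), but dropped the normalization in the clean version; reinstating it would require descending the normalization morphism to a finite level and checking smoothness there, which is doable but not what you wrote.

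The paper's argument sidesteps the base-change pathology entirely by working with a numerical invariant: the genus $g(FK'/K')$ is a non-negative integer that can only weakly decrease under base extension, so among finite purely inseparable $K'/K$ there is one achieving the minimum. For that $K'$, the genus is then constant under \emph{all} further finite extensions $L/K'$ (purely inseparable ones by minimality, separable ones always, and general ones by combining the two), which says exactly that the regular model $C/K'$ stays regular after every base change, i.e.\ is geometrically regular, hence smooth. This avoids any direct manipulation of singular loci or normalizations.
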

\begin{proof}
  For any field extension $K'/K$, we may consider the genus $g(FK'/K')$,
  i.e.\ the non-negative integer making the Riemann--Roch Theorem true;
  equivalently, this is the genus of the unique regular projective curve over $K'$
  of which $FK'$ is the function field.
  Pick a finite purely inseparable extension $K'/K$ with $g(FK'/K')$ minimal
  among all such choices.
  Let $C$ be the unique regular projective curve over $K'$ with function field $FK'$.

  We now use some general principles for the genus \autocite[Theorem 2.5.1]{Poonen_RationalPoints}.
  For any finite extension $L/K'$, we have $g(FL/L) \leq g(FK'/K')$,
  and the genus does not change under separable extensions.
  By choice of $K'$, this means that $g(FL/L) = g(FK'/K')$ for all $L/K'$.
  Equivalently, the base change $C_L$ remains regular.
  Therefore the curve $C$ is geometrically regular,
  which is equivalent to being smooth \cite[Proposition 3.5.22]{Poonen_RationalPoints}.
\end{proof}

The following is a technical result to be used in reduction steps in the proof of \Cref{T:Reciprocity-Main}.
\begin{lem}\label{lem:reciprocity-normal-reduction}
  In the situation of \Cref{T:Reciprocity-Main}, consider $\alpha \in \bigoplus_{v \in \Val(F/K)} I_q^d(Fv)$
  which maps to $0$ in $I_q^d K$.
  Let $K'/K$ be a finite normal extension, set $F' = FK'$, and consider the following commutative diagram:
  \[ \xymatrix{
      \bigoplus_{v \in \Val(F/K)} I_q^d(Fv) \ar[r] & I_q^d K \\
      \bigoplus_{w \in \Val(F'/K')} I_q^d(F'w) \ar[u] \ar[r] & I_q^d K' \ar[u]
    }\]
  Here all arrows are (sums of) transfer maps;
  the left-hand upward arrow is the sum of maps $I_q^d(F'w) \to I_q^d(Fv)$,
  which are zero unless $w$ lies over $v$, and otherwise are the transfer for $Fv \subseteq F'w$.
  There exists $\alpha' \in \bigoplus_{w \in \Val(F'/K')} I_q^d (F'w)$ mapping to $0$ in $I_q^d K'$
  and mapping to $\alpha$ under the vertical map,
  such that every place $w$ contributing a non-zero term to $\alpha'$
  lies above some place $v$ contributing a non-zero term to $\alpha$.
\end{lem}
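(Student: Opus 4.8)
The plan is to produce a crude lift of $\alpha$ with the correct support, and then to correct its image in $I_q^d K'$ without disturbing the support or the image in $\bigoplus_v I_q^d(Fv)$. I may assume $\alpha \neq 0$ (otherwise take $\alpha' = 0$); fix a place $v_0$ of $F$ contributing a non-zero term to $\alpha$ and a place $w_0$ of $F'$ lying over it. Since $\dim_2$ is unchanged under finite field extensions and residue fields of places of function fields in one variable are finite over the base field, we have $\dim_2 K' = \dim_2(Fv) = \dim_2(F'w) = d$ for all places $v$ of $F$, $w$ of $F'$, so $I_q^{d+1}$ vanishes on all these fields; hence \Cref{lem:transfer-unique} applies and lets me speak of \emph{the} transfer $\operatorname{tr}_{L/E}\colon I_q^d L \to I_q^d E$ for the finite extensions $E \subseteq L$ occurring, independently of the functional, while \Cref{lem:transfer-surj-kernel} is available for $K'/K$ and each $F'w/Fv$. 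Now for each place $v$ contributing a non-zero term $\alpha_v$ to $\alpha$ I pick a place $w(v)$ of $F'$ over $v$ (with $w(v_0) = w_0$) and, using surjectivity of $\operatorname{tr}_{F'w(v)/Fv}$ from \Cref{lem:transfer-surj-kernel}, an element $\beta_v \in I_q^d(F'w(v))$ with $\operatorname{tr}_{F'w(v)/Fv}(\beta_v) = \alpha_v$; let $\alpha'_0 \in \bigoplus_w I_q^d(F'w)$ be supported on the $w(v)$ with value $\beta_v$ there. By construction $\alpha'_0$ maps to $\alpha$ under the (upward) vertical map $\bigoplus_w I_q^d(F'w) \to \bigoplus_v I_q^d(Fv)$, and every place in its support lies over a place in the support of $\alpha$. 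Write $\gamma \in I_q^d K'$ for the image of $\alpha'_0$ under the bottom horizontal map; by commutativity of the square and the hypothesis that $\alpha$ maps to $0$ in $I_q^d K$, the transfer of $\gamma$ to $I_q^d K$ vanishes.

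Since $K'/K$ is normal, \Cref{lem:transfer-surj-kernel} provides a presentation $\gamma = \sum_j(\sigma_j \cdot q_j - q_j)$ with finitely many $q_j \in I_q^d K'$ and $\sigma_j \in \operatorname{Aut}(K'/K)$, where $\sigma_j\cdot$ is the natural action of $\sigma_j$ on $I_q^d K'$ (applying $\sigma_j$ to the coefficients of forms). Because $F/K$ is regular, $F' = FK'$ is the fraction field of the domain $F\otimes_K K'$, so $\sigma_j$ extends to an automorphism $\widehat{\sigma_j}$ of $F'$ fixing $F$ pointwise. This $\widehat{\sigma_j}$ carries $w_0$ to a place $\widehat{\sigma_j}w_0$ of $F'$ again lying over $v_0$, and induces a field isomorphism $F'w_0 \to F'(\widehat{\sigma_j}w_0)$ which is the identity on the copy of $Fv_0$ and agrees with $\sigma_j$ on the copy of $K'$.

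For each $j$ I choose, using surjectivity of $\operatorname{tr}_{F'w_0/K'}$ (\Cref{lem:transfer-surj-kernel}), an element $\phi_j \in I_q^d(F'w_0)$ with $\operatorname{tr}_{F'w_0/K'}(\phi_j) = q_j$, and let $\phi_j^\sigma \in I_q^d(F'(\widehat{\sigma_j}w_0))$ be the image of $\phi_j$ under the isomorphism above. Let $\beta^{(j)} \in \bigoplus_w I_q^d(F'w)$ be supported on the one or two places $w_0, \widehat{\sigma_j}w_0$, with value $-\phi_j$ at $w_0$ and $\phi_j^\sigma$ at $\widehat{\sigma_j}w_0$ (their sum if these coincide). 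Transfer maps on $I_q^d$ are compatible with isomorphisms of field extensions (immediate from the functorial description of Scharlau transfers together with the independence statement \Cref{lem:transfer-unique}); since $\widehat{\sigma_j}$ fixes $F$, hence $Fv_0$, this gives $\operatorname{tr}_{F'(\widehat{\sigma_j}w_0)/Fv_0}(\phi_j^\sigma) = \operatorname{tr}_{F'w_0/Fv_0}(\phi_j)$, so the sum of the transfers of $\beta^{(j)}$ into $I_q^d(Fv_0)$ is $0$; and since the isomorphism restricts to $\sigma_j$ on $K'$, it gives $\operatorname{tr}_{F'(\widehat{\sigma_j}w_0)/K'}(\phi_j^\sigma) = \sigma_j \cdot \operatorname{tr}_{F'w_0/K'}(\phi_j) = \sigma_j\cdot q_j$, so the sum of the transfers of $\beta^{(j)}$ into $I_q^d K'$ equals $\sigma_j\cdot q_j - q_j$.

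Finally I set $\alpha' = \alpha'_0 - \sum_j \beta^{(j)}$ (each $\beta^{(j)}$ extended by zero). Every place in the support of $\sum_j\beta^{(j)}$ lies over $v_0$, which contributes to $\alpha$, so together with the first paragraph the support condition holds; since $\sum_j\beta^{(j)}$ is supported over $v_0$ alone and its total transfer into $I_q^d(Fv_0)$ is $0$, it maps to $0$ under the vertical map, so $\alpha'$ still maps to $\alpha$; and its image under the bottom horizontal map is $\gamma - \sum_j(\sigma_j\cdot q_j - q_j) = 0$, as required. The main obstacle, and the only non-formal step, is the construction of the $\beta^{(j)}$: one must modify the $I_q^d K'$-value by a \emph{prescribed} element of $\ker(I_q^d K' \to I_q^d K)$ while keeping the $\bigoplus_v I_q^d(Fv)$-value zero and the support confined over one chosen place of the support of $\alpha$. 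This is made possible by the fact that, via \Cref{lem:transfer-surj-kernel}, the prescribed element is a sum of commutators $\sigma\cdot q - q$, which can be realised by spreading a single class over the conjugate places $w_0$ and $\widehat{\sigma}w_0$ with opposite signs: the $\operatorname{Aut}(F'/F)$-equivariance of the transfer (and the fact that $\operatorname{Aut}(F'/F)$ fixes $Fv_0$) kills the contribution into $I_q^d(Fv_0)$, while the $\sigma$-twist of the transfer over $K'$ produces exactly $\sigma\cdot q - q$; everything else is bookkeeping with surjectivity and functoriality of transfer maps.
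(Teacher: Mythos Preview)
Your proof is correct and follows essentially the same approach as the paper's: first lift $\alpha$ crudely using surjectivity of the transfers $I_q^d(F'w) \to I_q^d(Fv)$, then correct the image in $I_q^d K'$ by subtracting elements of the form $\sigma_j p_j - p_j$ with $p_j$ supported at a single chosen place $w_0$. The only difference is presentational: the paper packages your explicit computations with $\widehat{\sigma_j}$, the induced isomorphism on residue fields, and the compatibility of transfers into the single observation that $\operatorname{Aut}(K'/K)$ acts on the bottom row of the diagram (permuting places) and that the whole diagram is $\operatorname{Aut}(K'/K)$-equivariant for the trivial action on the top row, which immediately gives that each $\sigma_j p_j - p_j$ maps to zero upstairs.
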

\begin{proof}
  If $\alpha = 0$, we can simply choose $\alpha' = 0$.
  Therefore assume $\alpha \neq 0$.
  Enumerate the $v$ contributing non-trivially to $\alpha$ as $v_1, \dotsc, v_n$.
  For each of these $v_i$, pick a $w_i \in \Val(F'/K')$ lying above $v_i$.
  By surjectivity of the individual transfer maps $I_q^d(F'w_i) \to I_q^d(Fv_i)$ (\Cref{lem:transfer-surj-kernel}),
  we can pick $\beta \in \bigoplus_{w \in \Val(F'/K')} I_q^d(F'w)$ mapping to $\alpha$,
  and such that the only non-trivial contributions to $\beta$ come from the $w_i$.
  However, the image of $\beta$ in $I_q^d K'$ will not usually be $0$;
  in any case, it must be in the kernel of the transfer $I_q^d K' \to I_q^d K$
  by commutativity of the diagram.

  Observe that $\operatorname{Aut}(K'/K)$ acts on the bottom row of the diagram
  (where in particular the places $w$ are permuted by this action),
  and the entire diagram is $\operatorname{Aut}(K'/K)$-invariant when
  we let this group act trivially on the top row.
  According to \Cref{lem:transfer-surj-kernel},
  we may write the image of $\beta$ in $I_q^d K'$ as $\sum_{j = 1}^m (\sigma_j q_j - q_j)$
  with $\sigma_j \in \operatorname{Aut}(K'/K)$, $q_j \in I_q^d K'$.
  For each $q_j$, choose $p_j \in I_q^d(F'w_1)$ mapping to $q_j$ under the transfer
  (where the choice of the valuation $w_1$ is arbitrary among the $w_i$).
  Set $\alpha' = \beta - \sum_{j=1}^m (\sigma_j p_j - p_j)$, where we see each $p_j$ as an element
  of $\bigoplus_{w \in \Val(F'/K')} I_q^d(F'w)$.
  Then the image of $\alpha'$ in $I_q^d K'$ is zero by construction, since the image
  of $\beta$ is precisely cancelled out by the additional contribution.
  On the other hand, the image of each $\sigma_j p_j - p_j$ in $\bigoplus_{v \in \Val(F/K)} I_q^d(Fv)$
  is $0$, and so $\alpha'$ has the same image in this group as $\beta$,
  i.e.\ $\alpha'$ maps to $\alpha$.
\end{proof}

\begin{proof}[{Proof of \Cref{T:Reciprocity-Main}}]
  Let us first show that the given maps form a complex, i.e.\ their composition is the zero map.
  Choose $T \in F$ transcendental over $K$, so that $F/K(T)$ is a finite extension.
  We have the following diagram:
  \[ \xymatrix{
      I_q^{d+1}F \ar[r] \ar[d] & \bigoplus_{v \in \Val(F/K)}  I_q^d(Fv)   \ar[r] \ar[d] & I_q^d K \ar@{=}[d] \\
      I_q^{d+1} (K(T)) \ar[r] & \bigoplus_{v' \in \Val(K(T)/K)} I_q^d(K(T)v') \ar[r] & I_q^d K
    } \]
  Here the vertical maps are given by transfers maps, and the diagram commutes:
  This is by \Cref{lem:residue-transfer-commute} for the left square
  (using that the fundamental equality always holds for extensions of function fields
  \cite[Theorem 5.1.14]{VillaSalvador}),
  and functoriality of transfer for the right.
  The bottom row is a complex by \Cref{lem:Milnor-seq-char-2} in characteristic $2$ and
  \autocite[Corollary 21.7]{ElmanKarpenkoMerkurjev} otherwise.
  It follows that the top row is also a complex.

  Let us now show exactness.
  Consider $\alpha \in \bigoplus_{v \in \Val(F/K)} I_q^d(Fv)$ mapping to $0$ in $I_q^d K$.
  We must show that $\alpha$ lies in the image of $I_q^{d+1}F$.
  Choose a finite normal extension $K'/K$ such that, setting $F' = FK'$, every $w \in \Val(F'/K')$
  lying over one of the $v$ contributing to $\alpha$ has residue field $K'$,
  and such that $F'/K'$ is the function field of a smooth projective curve;
  this is possible by \Cref{lem:smoothify-function-field}.

  We apply \Cref{lem:reciprocity-normal-reduction} to obtain $\alpha' \in \bigoplus_{w \in \Val(F'/K')} I_q^d (F'w)$
  which maps to $0$ in $I_q^d K'$ and to $\alpha$ under the vertical map,
  and such that the only $w$ contributing to $\alpha'$ lie over some $v$ contributing to $\alpha$.
  Enumerate the $w$ contributing to $\alpha'$ as $w_1, \dotsc, w_n$.
  By construction one has $F'w_j = K'$ for $j = 1, \ldots, n$, and thus for any field extension $K''/K'$, each $w_j$ extends uniquely to a valuation in $\Val(F'K''/K'')$.
  If $\alpha'$ is zero, then so is $\alpha$, and there is nothing to show;
  therefore assume that $n \geq 1$.

  Choose a finite normal extension $K''/K'$ such that, with $F'' = F'K''$,
  the class of the divisor $u_i - u_1$ in $\Pic^0(F''/K'')$
  (the group of divisors of degree $0$ modulo principal divisors)
  is divisible by $2$ for every $i = 1, \dotsc, n$,
  where we write $u_i$ for the unique extension of $w_i$ to $F''/K''$.
  (To see that this exists, recall that for every $K''/K'$, the group $\Pic^0(F'K''/K'')$
  is identified with the group of $K''$-rational points of the Jacobian variety $J/K'$ of
  the smooth projective curve $C/K'$ defining $F'/K'$, which is an abelian variety
  \cite[Theorem 5.7.6]{Poonen_RationalPoints};
  since the group of rational points $J(\overline{K'})$ over the algebraic closure $\overline{K'}$ is $2$-divisible
  (as multiplication by $2$ is an isogeny $J \to J$),
  we can find $x \in J(\overline{K'})$
  with $2x = [w_i - w_1]$, where $[w_i - w_1] \in \Pic^0(F'/K') = J(K')$ is the class of the divisor $w_i - w_1$,
  and take $K''$ large enough so that $x \in J(K'')$.)

  We now have the following commutative diagram of transfer and residue maps:
  \[ \xymatrix{
      I_q^{d+1} F          \ar[r] & \displaystyle\bigoplus_{v \in \Val(F/K)}     I_q^d(Fv)   \ar[r] &        I_q^d K \\
      I_q^{d+1} F'  \ar[u] \ar[r] & \displaystyle\bigoplus_{w \in \Val(F'/K')}   I_q^d(F'w)  \ar[u] \ar[r] & I_q^d K' \ar[u] \\
      I_q^{d+1} F'' \ar[u] \ar[r] & \displaystyle\bigoplus_{u \in \Val(F''/K'')} I_q^d(F''u) \ar[u] \ar[r] & I_q^d K'' \ar[u]
  }\]
  Using \Cref{lem:reciprocity-normal-reduction} a second time,
  we obtain $\alpha'' \in \bigoplus_{u \in \Val(F''/K'')} I_q^d(F''u)$
  mapping to $0$ in $I_q^d K''$ and to $\alpha'$ under the vertical map,
  and such that only the $u_i$ contribute to $\alpha''$.
  We therefore have $\alpha_i \in I_q^d K'' = I_q^d(F''u_i)$ for $i = 1, \dotsc, n$
  such that $\alpha'' \in \bigoplus_{u \in \Val(F''/K'')} I_q^d(F''u)$ is the sum of the $\alpha_i$
  (each $\alpha_i$ contributing to the term at $u_i$).

  By the choice of $K''$, for each $i > 1$ there exists $f_i \in F''^\times$
  such that the divisor $(f_i)$ is equal to the divisor $u_i - u_1 + 2 D_i$ for some divisor $D_i$.
  For a $d$-fold Pfister form $q = \llangle a_1, \dotsc, a_{d-1}, b]]_{K''}$,
  it follows from \Cref{P:Pfisterresidue} that the form $q' = \llangle a_1, \dotsc, a_{d-1}, f_i, b]]_{F''}$
  has $\partial_u [q'] = 0$ in $I_q^d(Fu)$ at all $u \in \Val(F''/K'')$ except for $u = u_i$ and $u = u_1$,
  where $\partial_u [q'] = [q]$.
  By writing each $\alpha_i$ as a sum of classes of $d$-fold Pfister forms and applying this construction to each summand,
  we obtain in this way a class $\beta_i \in I_q^{d+1}F''$ such that 
  $\partial_u \beta_i = 0$ at all $u \in \Val(F''/K'')$ except for $u = u_i$ and $u = u_1$,
  where $\partial_u \beta_i = \alpha_i$.
  Set $\beta = \beta_2 + \dotsb + \beta_n \in I_q^{d+1}F''$.
  Then $\beta$ has residue $\alpha_i$ at $u_i$ for $i = 2, \dotsc, n$,
  residue $\alpha_2 + \dotsb + \alpha_n$ at $u_1$,
  and residue zero everywhere else.
  Since the image of $\alpha''$ in $I_q^d K''$ is zero,
  we have $\alpha_1 + \alpha_2 + \dotsb + \alpha_n = 0$ in $I_q^d K''$,
  and so $\alpha_1 = \alpha_2 + \dotsb + \alpha_n$ since $I_q^d K'' = I_q^d K'' / I_q^{d+1} K''$ is $2$-torsion.
  Therefore the image of $\beta$ under the sum of residue maps is precisely $\alpha''$.
  Using commutativity of the diagram, the image of $\beta$ in $I_q^{d+1} F$ maps to $\alpha$.
\end{proof}

\section{Describing rings of $S$-integers}\label{sect:UniversalDefinitionGeneralTechnique}
We will now first explain a general technique to construct universal first-order definitions of certain subrings in a field, which we will then in the next section combine with definability results from \cite{Andromeda-1} to obtain universal definitions of rings of $S$-integers in function fields over several types of base fields.

While most of the results in this section are purely algebraic, we gather here for the reader's convenience the notations and concepts we will borrow from first-order logic.
For a general reference, see \cite{Marker}, starting with Example 1.2.8.
We shall work in the signature of rings $\Lar$, consisting of binary operation symbols $+, -, \cdot$, and constant symbols $0$ and $1$.
We will consider rings (in particular fields) as structures in the language $\Lar$ by interpreting the operation and constant symbols in the natural way.
For a ring $R$, we will denote by $\Lar(R)$ the signature obtained by adding to $\Lar$ a constant symbol for every element of $R$.
Then $R$ (and more generally, every ring $R' \supseteq R$) is an $\Lar(R)$-structure in the natural way.

For a ring $R$ and $n \in \nat$, we will say that a subset $S \subseteq R^n$ is \emph{existentially definable in $R$} if there exists an existential $\Lar(R)$-formula defining $S$ (in the $\Lar(R)$-structure $R$),
and \emph{universally definable in $R$} if there exists a universal $\Lar(R)$-formula defining $S$ in $R$.
If the ring $R$ is clear from context, we simply call $S$ existentially (universally) definable.
We abbreviate existentially (universally) definable to $\exists$-definable ($\forall$-definable).

The more algebraically inclined reader may alternatively take the following equivalent definition when $R$ is a non-algebraically closed field, which can be phrased without any reference to first-order logic:
a subset $S \subseteq R^n$ is $\exists$-definable in $R$ if there exist $m \in \nat$ and a polynomial $f \in R[X_1, \ldots, X_n, Y_1, \ldots, Y_m]$ such that
$$ S = \lbrace (x_1, \ldots, x_n) \in R^n \mid \exists (y_1, \ldots, y_m) \in R^m : f(x_1, \ldots, x_n, y_1, \ldots, y_m) = 0 \rbrace, $$
and $S$ is $\forall$-definable if $R^n \setminus S$ is $\exists$-definable.
The equivalence of these two definitions when $R$ is a non-algebraically closed field is well-known, see e.g.~\cite[Proposition 2.1]{DaansGlobal}.
We also use the word \emph{diophantine} for existentially definable; while some authors make some technical distinctions, we will use the terms interchangeably.
For subsets of $R^1$, the definition of existentially definable given in the introduction for an arbitrary field $R$ agrees with the one here.

Finally, in the statement of \Cref{ce-sets-ea-definable} we will also mention \emph{existential-universally definable} ($\exists\forall$-definable) subsets, i.e.~those which can be defined by a existential-universal $\Lar(R)$-formula.
In algebraic terms, for a non-algebraically closed field $R$, a subset $S \subseteq R^n$ is $\exists\forall$-definable in $R$ if there exist $m_1, m_2 \in \nat$ and a polynomial $f \in R[X_1, \ldots, X_n, Y_1, \ldots, Y_{m_1}, Z_1, \ldots, Z_{m_2}]$ such that
$$ S = \lbrace x \in R^n \mid \exists y \in R^{m_1} :  \forall z \in R^{m_2} : f(x, y, z) \neq 0 \rbrace.$$

We now start to describe our general technique.
Recall from \Cref{C:residueHomomorphism} that for a $\zz$-valued field $(F, v)$ and $d \in \nat^+$, assuming when $v$ is dyadic that $[Fv : (Fv)\pow{2}] < 2^d$, there is a unique homomorphism $$\partial_v : I^{d+1}_q(F)/I^{d+2}_q(F) \to I^{d}_q(Fv)/I^{d+1}_q(Fv)$$ which maps the class of a $(d+1)$-fold Pfister form $q$ which is anisotropic over $F_v$ to the class of its residue form $\RF{1}{q_{F_v}}$.
We write $\partial_v q$ instead of $\partial_v [q]$.

Given a set of $\zz$-valuations $\mc{V}$ on $F$ such that $[Fv : (Fv)\pow{2}] < 2^d$ for all dyadic $v \in \mc{V}$ and given $\alpha \in I^{d+1}_q(F)/I^{d+2}_q(F)$ (and in particular, for a $(d+1)$-fold Pfister form over $F$), we denote by $\delta \alpha \subseteq \mc{V}$ the set of valuations $v \in \mc{V}$ for which $\partial_v \alpha \neq 0$.
In all of the applications we will study in \Cref{sect:universalFunctionfield}, the set $\delta q$ also has the following alternative description in terms of anisotropy over the henselisations $F_v$ of $F$ with respect to the valuations $v \in \mc{V}$.
\begin{prop}\label{P:deltavsDelta}
Let $F$ be a field, $\mc{V}$ a set of $\zz$-valuations on $F$, $d \in \nat^+$ such that $I^{d+1}_q(Fv) = 0$ for all $v \in \mc{V}$, and $[Fv : (Fv)\pow{2}]<2^d$ for all dyadic $v \in \mc{V}$.
For any $(d+1)$-fold Pfister form $q$ over $F$ we have
$$ \delta q = \lbrace v \in \mc{V} \mid q \text{ is anisotropic over } F_v \rbrace.$$
\end{prop}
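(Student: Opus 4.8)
The plan is to prove the set equality $\delta q = \{ v \in \mc{V} \mid q \text{ anisotropic over } F_v \}$ by unwinding the definition of $\partial_v$ from \Cref{C:residueHomomorphism} and exploiting the concrete description of the residue map on Pfister forms given in \Cref{P:Pfisterresidue} and \Cref{P:Pfister-semilocal-presentation}. Fix $v \in \mc{V}$; the hypotheses ensure $I^{d+1}_q(Fv) = 0$, so we may identify $I^d_q(Fv)$ with $I^d_q(Fv)/I^{d+1}_q(Fv)$, and also $I^{d+2}_q(F_v) = 0$ (since $F_v$ has the same residue field, and a $(d+1)$-fold Pfister form over $F_v$ is either anisotropic or zero), so $\partial_v$ makes sense on classes of $(d+1)$-fold Pfister forms with no further quotient needed on the right.

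The two inclusions go as follows. First, recall from \Cref{C:residueHomomorphism} that by definition $\partial_v[q] = [\RF{1}{q_{F_v}}] + I^{d+1}_q(Fv)$ if $q_{F_v}$ is anisotropic and $\partial_v[q] = 0$ if $[q_{F_v}] = 0$; since $q$ is an $(n+1)$-fold (here $(d+1)$-fold) Pfister form, $q_{F_v}$ is either anisotropic or has class zero, so exactly one of these cases occurs. If $q$ is isotropic over $F_v$, then $[q_{F_v}] = 0$ and hence $\partial_v q = 0$, so $v \notin \delta q$; this gives the inclusion $\delta q \subseteq \{ v \mid q \text{ anisotropic over } F_v \}$. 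For the reverse inclusion, suppose $q$ is anisotropic over $F_v$. Using \Cref{P:Pfister-semilocal-presentation} applied to the finite set $S = \{v\}$ (the dyadic hypothesis $[Fv:(Fv)\pow{2}] < 2^d$ is exactly what is assumed), we may write $q \cong \llangle a_1, \ldots, a_d, b]]_F$ with $v(a_1) = \cdots = v(a_{d-1}) = v(b) = v(1+4b) = 0$. Passing to $F_v$ (which does not change the residue field $Fv$ and preserves the valuations of these elements), \Cref{P:Pfisterresidue} applies: since $q_{F_v}$ is anisotropic, case~\eqref{it:PfisterResidueUnramified} would force $\llangle \ovl{a_1}, \dotsc, \ovl{a_d}, \ovl{b}]]_{Fv}$ to be an anisotropic $(d+1)$-fold Pfister form over $Fv$, contradicting $I^{d+1}_q(Fv) = 0$; hence we must be in case~\eqref{it:PfisterResidueRamified}, $v(a_d)$ is odd, and $\partial_v q = [\llangle \ovl{a_1}, \dotsc, \ovl{a_{d-1}}, \ovl{b}]]_{Fv}] \in I^d_q(Fv)/I^{d+1}_q(Fv)$. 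Moreover, by case~\eqref{it:PfisterResidueRamified} again, anisotropy of $q_{F_v}$ is equivalent to anisotropy of $\llangle \ovl{a_1}, \dotsc, \ovl{a_{d-1}}, \ovl{b}]]_{Fv}$, so this residue form is an anisotropic $d$-fold Pfister form over $Fv$; since $I^{d+1}_q(Fv) = 0$, it is nonzero in $I^d_q(Fv) = I^d_q(Fv)/I^{d+1}_q(Fv)$ (an $n$-fold Pfister form is nonzero in the Witt group precisely when anisotropic). Therefore $\partial_v q \neq 0$, i.e.\ $v \in \delta q$.

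The main obstacle — though it is a mild one — is bookkeeping about which ambient field the residue computation is performed over: $\partial_v$ as defined in \Cref{C:residueHomomorphism} refers to anisotropy over the henselisation $F_v$, whereas \Cref{P:Pfister-semilocal-presentation} gives a presentation over $F$ itself, and \Cref{P:Pfisterresidue} is stated for henselian $\zz$-valued fields. The reconciliation is that the presentation of $q$ over $F$ remains a valid presentation over $F_v$ with the same conditions on the valuations of $a_1,\dots,a_{d-1},b$, that $F$ and $F_v$ have the same residue field $Fv$, and that applying \Cref{P:Pfisterresidue} over the henselian field $F_v$ yields exactly the residue form entering the definition of $\partial_v$. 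Once this identification is made explicit, the argument reduces, as above, to the dichotomy in \Cref{P:Pfisterresidue} together with the fact that $I^{d+1}_q(Fv)=0$ forces the ramified case whenever $q$ is anisotropic over $F_v$, and to the standard fact that a Pfister form is nonzero in the quadratic Witt group exactly when it is anisotropic.
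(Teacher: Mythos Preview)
Your argument is correct. The two inclusions are established exactly as you say: isotropy over $F_v$ forces $\partial_v q = 0$ directly from the definition in \Cref{C:residueHomomorphism}, and for the converse you choose a good presentation via \Cref{P:Pfister-semilocal-presentation}, rule out the unramified case of \Cref{P:Pfisterresidue} using $I_q^{d+1}(Fv)=0$, and conclude that the residue is an anisotropic $d$-fold Pfister form, hence nonzero. The bookkeeping about $F$ versus $F_v$ is handled correctly. One small quibble: your parenthetical justification for $I_q^{d+2}(F_v)=0$ is garbled (you wrote ``$(d+1)$-fold'' where the relevant forms are $(d+2)$-fold), but you never actually use this fact, so it does no harm.

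The paper's proof is genuinely different and considerably shorter: it simply invokes the final clause of \Cref{C:residueHomomorphism}, which says that when the base field is henselian and $I_q^{d+1}(Fv)=0$, the map $\partial_v \colon I_q^{d+1}(F_v) \to I_q^d(Fv)$ is an \emph{isomorphism}. Since $\partial_v[q]$ (for $F$) coincides with $\partial_v[q_{F_v}]$ (for $F_v$) by construction, and a Pfister form has nonzero Witt class exactly when anisotropic, the equivalence $\partial_v q \neq 0 \iff [q_{F_v}] \neq 0 \iff q_{F_v}$ anisotropic drops out immediately. Your approach essentially reproves the injectivity half of this isomorphism by hand for the specific class $[q_{F_v}]$, via an explicit presentation; this is more concrete but longer, and it requires the semilocal presentation lemma, which the paper's argument avoids entirely.
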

\begin{proof}
This is immediate from the fact that $\partial_v : I^{d+1}_q(F_v) \to I^d_q(Fv)$ is an isomorphism for each $v \in \mc{V}$, see \Cref{C:residueHomomorphism}.
\end{proof}
\begin{defi}\label{D:finiteSupport}
Let $F$ be a field. A set of valuations $\mc{V}$ on $F$ is said to satisfy the \emph{finite support property}\index{finite support property} if for all $a \in F^\times$, the set of valuations $v$ in $\mc{V}$ for which $v(a) \neq 0$ is finite.
\end{defi}
\begin{vbn}\label{E:finiteSupport}
(1) If $F$ is a number field, then the set of all $\zz$-valuations on $F$ has the finite support property.

(2) If $K$ is a field and $F/K$ a function field in one variable, then the set $\mc{V}(F/K)$ of $\zz$-valuations on $F$ trivial on $K$ has the finite support property.

From (1) and (2) it follows that the set of all $\zz$-valuations on any global field has the finite support property.
\end{vbn}
\begin{opm}\label{R:finiteSupportQF}
If $\mc{V}$ is a set of valuations on $F$ with the finite support property, then for every $d \in \nat^+$ and $\alpha \in I^{d+1}_q(F)/I^{d+2}_q(F)$ the set $\delta \alpha$ is finite.
To see this, it suffices to consider the case where $\alpha$ is the class of a $(d+1)$-fold Pfister form $\llangle a_1, \ldots, a_{d}, b]]_F$ over $F$ with $b \neq 0$. 
For all but finitely many valuations $v \in \mc{V}$ we have $v(a_1) = \ldots = v(a_{d}) = v(b) = v(1+4b) = 0$,
and then $\partial_v \alpha = 0$ by \Cref{P:Pfisterresidue}.
\end{opm}

For the rest of this section we let $F$ be a field, $d \in \nat^+$, $\mc{V}$ a set of $\zz$-valuations on $F$ with the finite support property and such that $[Fv : (Fv)\pow{2}] < 2^d$ for all dyadic $v \in \mc{V}$, and $S \subseteq \mathcal{V}$ a finite set. 
The goal of this section is to develop a general technique, abstracting from Koenigsmann's proof in \cite{Koe16},
which we will use to establish universal definability of the ring of $S$-integers $\mc{O}_S = \bigcap_{v \in \mc{V} \setminus S} \mc{O}_v$ in $F$.
We will in \Cref{sect:universalFunctionfield} apply this to the case where $F$ is a function field in one variable over a field $K$ and $\mc{V} = \mc{V}(F/K)$.

\begin{defi}
For a quadratic form $q$ defined over $F$ and $c \in F^\times$, we define the following sets:
\begin{align*}
\Odd(c) &= \lbrace v \in \mc{V} \mid v(c) \text{ odd} \rbrace \\
\Neg(c) &= \lbrace v \in \mc{V} \mid v(c) < 0 \rbrace \\
J_c(q) &= \bigcap \lbrace \mathfrak{m}_v \mid v \in \delta q \cap \Odd(c) \rbrace \\
H_c(q) &= \bigcap \lbrace \mathfrak{m}_v^{-v(c)} \mid v \in \delta q \cap \Neg(c) \rbrace
\end{align*}
\end{defi}
Here and throughout this section, we take the convention that an intersection of subsets of a field $F$ indexed by an empty set is equal to all of $F$.
That is, $J_c(q) = F$ if $\delta q \cap \Odd(c) = \emptyset$, and $H_c(q) = F$ if $\delta q \cap \Neg(c) = \emptyset$.

The sets $J_c(q)$ were essentially introduced in \cite{Koe16},
and the sets $H_c(q)$ were introduced in \cite{DaansGlobal} with the goal of studying the characteristic $2$ case.
For $m, p \in \nat$ and $a_1, \ldots, a_m, b_1, \ldots, b_p \in F^\times$, we will write 
$(J_{a_1} \cap \ldots \cap J_{a_m} \cap H_{b_1} \cap \ldots \cap H_{b_p})(q)$
instead of
$J_{a_1}(q) \cap \ldots \cap J_{a_m}(q) \cap H_{b_1}(q) \cap \ldots \cap H_{b_p}(q).$
\begin{lem}\label{L:PfisterNontrivialSlot}
Consider a $(d+1)$-fold Pfister form $q = \llangle a_1, \ldots, a_d, b ]]_F$ for some elements $a_1, \ldots, a_d, b \in F$ and let $v \in \delta q$. If $v$ is non-dyadic, then $v \in \bigcup_{i=1}^d \Odd(a_i) \cup \Odd(1+4b)$.
If $v$ is dyadic, then $v \in \bigcup_{i=1}^d \Odd(a_i) \cup \Neg(b)$.
\end{lem}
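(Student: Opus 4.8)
The plan is to establish the contrapositive. Assume $v(a_i)$ is even for every $i=1,\dotsc,d$, together with $v(1+4b)$ even if $v$ is non-dyadic, respectively $v(b)\geq 0$ if $v$ is dyadic; I will show $\partial_v q=0$, so that $v\notin\delta q$. Since $\partial_v q$ depends only on the form $q_{F_v}$ over the henselisation (\Cref{C:residueHomomorphism}) and $F_v$ is henselian, I may work over $F_v$ and compute using \Cref{P:Pfisterresidue}. Two observations will be used repeatedly: an isotropic Pfister form has trivial class, and $\llangle b]]_{F_v}$ is a subform of $q_{F_v}$, so isotropy of $\llangle b]]_{F_v}$ already makes $q_{F_v}$ isotropic and hence of trivial class. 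As a preliminary normalisation, since $v$ has value group $\zz$ and each $v(a_i)$ is even, I divide each $a_i$ by the square of a suitable element of $F^\times$; this leaves the isometry class of $q$ unchanged, because the bilinear Pfister form $\llangle a_1,\dotsc,a_d\rrangle^b_F$, and therefore $q=\llangle a_1,\dotsc,a_d\rrangle^b_F\otimes\llangle b]]_F$, depends on the $a_i$ only modulo squares. So I may assume $v(a_1)=\dotsb=v(a_d)=0$.

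In the non-dyadic case I would then renormalise the last slot. Completing the square shows that $\llangle b]]_{F_v}$ is isometric to the binary form $x_1^2-(1+4b)x_2^2$; writing $1+4b=u\pi^{2k}$ with $\pi$ a uniformiser, $v(u)=0$ and $k\in\zz$ (possible since $v(1+4b)$ is even), a scaling of $x_2$ identifies this with $x_1^2-ux_2^2$. If $\bar u$ is a square in $Fv$, then $u$ is a square in $F_v$ by Hensel's Lemma, so this binary form, hence $q_{F_v}$, is isotropic and $\partial_v q=0$. Otherwise $\bar u\neq 1$, so $v(u-1)=0$, and putting $b''=(u-1)/4$ gives $\llangle b'']]_{F_v}\cong x_1^2-ux_2^2$ with $v(b'')=v(1+4b'')=0$; thus $q_{F_v}\cong\llangle a_1,\dotsc,a_d,b'']]_{F_v}$ has the shape demanded by \Cref{P:Pfisterresidue}, and since $v(a_d)=0$ is even, case (1) of that lemma gives $\partial_v q=0$.

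In the dyadic case I would argue directly. If $b\in\mf m_v$, then $X^2-X-b$ has a root in $\mc O_v$: modulo $\mf m_v$ it factors as $X(X-1)$, whose roots are simple since the derivative $2X-1$ equals the nonzero constant $-1$ in $Fv$ (as $\charac Fv=2$), so Hensel's Lemma applies; such a root exhibits $\llangle b]]_{F_v}$ as isotropic, hence $q_{F_v}$ is isotropic and $\partial_v q=0$. If instead $v(b)=0$, then $v(4b)=2v(2)>0$ forces $v(1+4b)=0$, so $q_{F_v}=\llangle a_1,\dotsc,a_d,b]]_{F_v}$ again has the form required by \Cref{P:Pfisterresidue}, and case (1) applies because $v(a_d)=0$ is even.

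The real content of the argument sits in the treatment of the $b$-slot: one must verify that whenever $v(b)$ (dyadic case) or $v(1+4b)$ (non-dyadic case) fails to be a $v$-unit, either $\llangle b]]_{F_v}$ — and hence $q_{F_v}$ — becomes isotropic, or the slot admits a $v$-unit presentation; this is the point where the two residue characteristics genuinely diverge, and where Hensel's Lemma (for $X^2-u$ when $\charac Fv\neq 2$, and for the Artin--Schreier-type polynomial $X^2-X-b$ when $\charac Fv=2$) does the work. Once a presentation $\llangle a_1,\dotsc,a_d,b'']]_{F_v}$ with $v(a_1)=\dotsb=v(a_{d-1})=v(b'')=v(1+4b'')=0$ is in hand, vanishing of $\partial_v q$ is immediate from \Cref{P:Pfisterresidue}, since $v(a_d)$ is even.
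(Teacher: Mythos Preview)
Your argument is correct and follows essentially the same route as the paper: normalise the $a_i$ to units, treat the $b$-slot case by case, and invoke \Cref{P:Pfisterresidue}\eqref{it:PfisterResidueUnramified} once a presentation with $v(a_1)=\dotsb=v(a_{d-1})=v(b)=v(1+4b)=0$ is achieved (or conclude isotropy of $\llangle b]]_{F_v}$ directly). The only cosmetic difference is in the non-dyadic renormalisation of the last slot: the paper gives a single explicit change of variables showing $\llangle b]]_F\cong\llangle u^2b+\tfrac{u^2-1}{4}]]_F$ for any $u\in F^\times$, which over $F$ itself brings $v(1+4b)$ to zero without a case split, whereas you first diagonalise over $F_v$ to $x_1^2-ux_2^2$ and then split on whether $\bar u$ is a square before reconstructing $b''$; both are equally valid.
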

\begin{proof}
Suppose that $v(a_i)$ is even for $1 \leq i \leq d$.
We need to show that $v(1+4b)$ is odd when $v$ is non-dyadic, and that $v(b) < 0$ if $v$ is dyadic.
To this aim, by multiplying each of the $a_i$ by a square in $F^\times$, we may assume without loss of generality that $a_1, \ldots, a_d \in \mc{O}_v^\times$.
Furthermore, if we had $v(b) > 0$, then $\llangle b]]_{F_v}$ would be isotropic and hence $q_{F_v}$ would be isotropic, whereby $\partial_v q = 0$ in $I_q^d(Fv) / I_q^{d+1}(Fv)$.
Since this contradicts the assumption that $v \in \delta q$, we infer that $v(b) \leq 0$.

First consider the case where $v$ is non-dyadic.
Assume that $v(1+4b)$ is even, so there exists $u \in F^\times$ with $v(u^2) = 2v(u) = -v(1+4b)$.
One computes that
$$ X^2 - XY - bY^2 = (X + Y\frac{1-u}{2u})^2 - (X + Y\frac{1-u}{2u})(Y\frac{1}{u}) - (u^2b + \frac{u^2 - 1}{4})(Y\frac{1}{u})^2 $$
and thus that $\llangle b ]]_F \cong \llangle u^2b + \frac{u^2 - 1}{4} ]]_F$.
Hence, after replacing $b$ by $u^2b + \frac{u^2 - 1}{4}$, we may assume that $v(1+4b) = 0$, whereby automatically $v(b) = 0$.
But then it follows from \Cref{P:Pfisterresidue} that $\partial_v q = 0$ in $I_q^d(Fv) / I_q^{d+1}(Fv)$,
contradicting the assumption that $v \in \delta q$. We conclude that $v(1+4b)$ is odd.

Now assume that $v$ is dyadic.
If $v(b) = 0$, then it again follows from \Cref{P:Pfisterresidue} that $\partial_v q = 0$ in $I_q^d(Fv) / I_q^{d+1}(Fv)$,
contradicting the assumption that $v \in \delta q$.
Since we already know that $v(b) \leq 0$, we conclude that $v(b) < 0$.
\end{proof}
We now introduce a property of a set $X \subseteq F^{d+1}$ which we will need for our general statement.
\begin{defi}\label{D:PfisterStratum}
Let $F$ be a field, $d \in \nat^+$, $\mc{V}$ a set of $\zz$-valuations on $F$ with the finite support property and such that $[Fv : (Fv)\pow{2}] < 2^d$ for all dyadic $v \in \mc{V}$.
For a finite set $S \subseteq \mc{V}$, we call a set $X \subseteq F^{d+1}$ a \emph{Pfister stratum for $S$ (with respect to $\mc{V}$)}\index{Pfister stratum} if the following two properties are satisfied.
\begin{enumerate}[(i)]
\item\label{it:propA} For any $(x_1, \ldots, x_d, y) \in X$ we have $x_1, \ldots, x_d, y, 1+4y \neq 0$, $\delta\llangle x_1, \ldots, x_d , y ]]_F \nsubseteq S$, and
$$ S \cap \Big( \bigcup_{i=1}^{d-1} \Odd(x_i) \cup \Odd(1+4y) \cup \Neg(y) \Big) = \emptyset. $$
\item\label{it:propB} For any $w \in \mc{V} \setminus S$, there exists $(x_1, \ldots, x_d, y) \in X$ with $y \in \mathcal{O}_w$ and $\delta \llangle x_1, \ldots, x_d, y ]]_F \subseteq S \cup \lbrace w \rbrace$.
\end{enumerate}
\end{defi}
Our first aim now is to show how $\exists$-definable Pfister strata yield universal definitions of rings of $S$-integers (\Cref{T:UniversalDefinitionsGeneralTechnique} below).
We will discuss instances where such Pfister strata can be found at the end of this section.

For a set $X \subseteq F^{d+1}$ and $\pi \in F$, we will throughout this section use the following notation:
$$X_\pi = \lbrace (a_1, \ldots, a_{d}, b) \in F^{d+1} \mid (a_1, \ldots, a_{d-1}, a_d\pi, b) \in X \rbrace.$$
\begin{lem}\label{L:EtoAMainLem}
Let $\pi \in F^\times$ be such that $S \subseteq \Odd(\pi)$.
Let $X \subseteq F^{d+1}$ be a Pfister stratum for $S$, and let $a_0 \in F^\times$ be such that $\Odd(\pi) \setminus S \subseteq \Odd(a_0) \subseteq \mc{V} \setminus S$.
We have 
$$ \bigcup_{v \in \mc{V} \setminus S} \mathfrak{m}_v = \bigcup_{(a_1, \ldots, a_d, b) \in X_\pi} \Big(\bigcap_{i=0}^{d} J_{a_i} \cap J_{1+4b} \cap H_b\Big)(\llangle a_1, \ldots, a_{d-1}, a_d\pi, b]]_F).$$
\end{lem}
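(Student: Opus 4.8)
The plan is to prove the two inclusions separately. For the inclusion $\subseteq$ I would use property~(i)(ii) of \Cref{D:PfisterStratum} --- more precisely property~(ii) --- and for $\supseteq$ property~(i); in both directions the technical engine is \Cref{L:PfisterNontrivialSlot}, which locates a ``ramified slot'' of the relevant $(d+1)$-fold Pfister form. Throughout, given $(a_1,\ldots,a_d,b)\in X_\pi$ I write $q=\llangle a_1,\ldots,a_{d-1},a_d\pi,b]]$ for the associated Pfister form; it is well-defined since property~(i) forces $a_1,\ldots,a_{d-1},a_d\pi,b,1+4b$ to be non-zero.

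First I would treat $\bigcup_{v\in\mc{V}\setminus S}\mf{m}_v\subseteq\bigcup_{(a_1,\ldots,a_d,b)\in X_\pi}(\cdots)$. Starting from $z\in\mf{m}_w$ with $w\in\mc{V}\setminus S$, apply property~(ii) to get $(x_1,\ldots,x_d,y)\in X$ with $y\in\mc{O}_w$ and $\delta\llangle x_1,\ldots,x_d,y]]\subseteq S\cup\{w\}$, and set $a_i=x_i$ for $i<d$, $a_d=x_d\pi^{-1}$, $b=y$. Then $(a_1,\ldots,a_d,b)\in X_\pi$ and $q=\llangle x_1,\ldots,x_d,y]]$, so $\delta q\subseteq S\cup\{w\}$. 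It remains to check that $z$ lies in $(\bigcap_{i=0}^d J_{a_i}\cap J_{1+4b}\cap H_b)(q)$. The key observation is that for every $v\in S$: property~(i) gives $v\notin\Odd(x_i)$ for $i\le d-1$, $v\notin\Odd(1+4y)$ and $v\notin\Neg(y)$; moreover $\Odd(a_0)\cap S=\emptyset$ since $\Odd(a_0)\subseteq\mc{V}\setminus S$. Hence $\delta q\cap\Neg(b)=\emptyset$, so $H_b(q)=F\ni z$; and for $c\in\{a_0,a_1,\ldots,a_{d-1},1+4b\}$ we get $\delta q\cap\Odd(c)\subseteq\{w\}$, so $z\in\mf{m}_w\subseteq J_c(q)$. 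Only the $a_d$-slot needs extra care: if $v\in S\cap\delta q$, then applying \Cref{L:PfisterNontrivialSlot} to $\llangle x_1,\ldots,x_d,y]]$ and using property~(i) to exclude all its slots except $x_d$ forces $v(x_d)$ to be odd; since $v\in S\subseteq\Odd(\pi)$ has $v(\pi)$ odd, it follows that $v(a_d)=v(x_d)-v(\pi)$ is even, so $\delta q\cap\Odd(a_d)\subseteq\{w\}$ and $z\in J_{a_d}(q)$ as well.

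For the reverse inclusion I would take $z$ in the right-hand side, witnessed by some $(a_1,\ldots,a_d,b)\in X_\pi$. Property~(i) gives $\delta q\nsubseteq S$, so one may choose $w\in\delta q$ with $w\notin S$; the goal reduces to $z\in\mf{m}_w$. By \Cref{L:PfisterNontrivialSlot} applied to $q$ and $w$, one of the following holds: $w\in\Odd(a_i)$ for some $1\le i\le d-1$, or $w\in\Odd(1+4b)$, or $w\in\Neg(b)$, or $w\in\Odd(a_d\pi)$. In the first two cases $w\in\delta q\cap\Odd(c)$ with $c=a_i$ resp.\ $c=1+4b$, so $z\in J_c(q)\subseteq\mf{m}_w$; in the third, $w\in\delta q\cap\Neg(b)$, so $z\in H_b(q)\subseteq\mf{m}_w^{-w(b)}\subseteq\mf{m}_w$ using $-w(b)\ge1$; in the fourth, either $w(a_d)$ is odd and $z\in J_{a_d}(q)\subseteq\mf{m}_w$, or $w(a_d)$ is even, so that $w(\pi)$ is odd, whence $w\in\Odd(\pi)\setminus S\subseteq\Odd(a_0)$ and $z\in J_{a_0}(q)\subseteq\mf{m}_w$. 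In every case $z\in\mf{m}_w$, completing this inclusion.

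The argument is essentially careful bookkeeping, and I expect no deep obstacle; the one delicate point is the parity accounting around the $d$-th slot. The substitution $a_d\mapsto a_d\pi$ is designed so that whatever ramification $\llangle x_1,\ldots,x_d,y]]$ has at a place of $S$ --- which by \Cref{L:PfisterNontrivialSlot} and property~(i) must sit ``in the $x_d$-slot'' --- becomes invisible to $\Odd(a_d)$, while the hypothesis $\Odd(\pi)\setminus S\subseteq\Odd(a_0)$ guarantees that the places outside $S$ at which $\pi$ picks up odd valuation are captured by $\Odd(a_0)$. One should also keep in mind the standing convention that an intersection over an empty index set equals $F$, which makes the degenerate cases ($\delta q\cap\Odd(c)$ or $\delta q\cap\Neg(b)$ empty) automatic.
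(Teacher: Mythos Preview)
Your proof is correct and follows essentially the same approach as the paper's: both directions are handled via the same case analysis driven by \Cref{L:PfisterNontrivialSlot}, with the parity bookkeeping around the $d$-th slot (the substitution $a_d\mapsto a_d\pi$ and the auxiliary $a_0$) carried out in the same way. The only difference is cosmetic—your version is slightly more explicit in the $\supseteq$ direction and introduces the intermediate notation $x_i,y$—but the argument is identical.
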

\begin{proof}
We first show the inclusion from left to right.
Let $w \in \mc{V} \setminus S$.
Since $X$ is a Pfister stratum for $S$, we can by property \eqref{it:propB} find $(a_1, \ldots, a_d, b) \in X_\pi$ such that, for $q = \llangle a_1, \ldots, a_{d-1}, a_d\pi, b ]]_F$, one has $\delta q \subseteq S \cup \lbrace w \rbrace$ and $b \in \mathcal{O}_w$.
Observe that, for $v \in \delta q \setminus \lbrace w \rbrace$, one has $v \not\in \bigcup_{i=0}^{d-1} \Odd(a_i) \cup \Odd(1+4b) \cup \Neg(b)$ by \eqref{it:propA} and the choice of $a_0$.
By \Cref{L:PfisterNontrivialSlot} we must have $v \in \Odd(a_d\pi)$, and since $v \in \Odd(\pi)$, $v \not\in \Odd(a_d)$.
Furthermore, $w \not\in \Neg(b)$.

We obtain that 
$$\delta q \cap \Neg(b) = \emptyset \enspace\text{and}\enspace \delta q \cap \Big( \bigcup_{i=0}^{d} \Odd(a_i) \cup \Odd(1+4b) \Big) \subseteq \lbrace w \rbrace$$
whereby $H_b(q) = F$ and thus
\begin{align*}
&\Big(\bigcap_{i=0}^{d} J_{a_i} \cap J_{1+4b} \cap H_b\Big)(q) = \Big(\bigcap_{i=0}^{d} J_{a_i} \cap J_{1+4b}\Big)(q) \\
= \enspace &\bigcap \Big\lbrace \mathfrak{m}_v \enspace\Big|\enspace v \in \delta q \cap \Big( \bigcup_{i=0}^{d} \Odd(a_i) \cup \Odd(1+4b) \Big) \Big\rbrace
\supseteq \enspace \mathfrak{m}_w
\end{align*}
concluding the proof of the first inclusion.

We now prove the inclusion from right to left. Let $(a_1, \ldots, a_d, b) \in X_\pi$ and set $q = \llangle a_1, \ldots, a_{d-1}, a_d\pi, b]]_F$.
By \eqref{it:propA} we have $\delta q \setminus S \neq \emptyset$. Let $w \in \delta q \setminus S$. The required statement now follows from \Cref{L:PfisterNontrivialSlot}, using that, if $w \in \Odd(a_d\pi)$, then either $w \in \Odd(a_d)$ or $w \in \Odd(\pi) \setminus S \subseteq \Odd(a_0)$.
\end{proof}
We will now show that the sets $J_c(q)$ and $H_c(q)$ are existentially definable, uniformly in the parameters defining $q$, under the condition that the sets $\bigcap_{v \in \delta q} \mc{O}_v$ are existentially definable, uniformly in the parameters defining $q$ (\Cref{IJH}).
To be precise: when we are given $n, m \in \nat$, a subset $A \subseteq F^n$, and for each $a \in A$ a set $S_a \subseteq F^m$, we say that the sets $S_a$ are existentially definable in $F$, \emph{uniformly in the parameters $a$} if the set
$$ \lbrace (a, x) \in F^{n} \times F^{m} \mid a \in A, x \in S_a \rbrace $$
is $\exists$-definable in $F$.\footnote{%
  In all cases of interest to us, the parameter set $A$ will be the set of tuples $(a_1, \dotsc, a_d, b)$
  which define a valid Pfister form $\llangle a_1, \dotsc, a_d, b]]_F$ for some $d$,
  so $A$ is simply given by the conditions $a_1, \dotsc, a_d, 1+4b \neq 0$, and therefore $A$ itself will trivially be quantifier-freely definable.
  In more general circumstances, where $A$ is not guaranteed to be definable at all,
  other authors might prefer a less strict definition of uniform existential definability.
}
\begin{lem}\label{jacoblem}
Let $F$ be a field, $S$ a finite set of $\zz$-valuations on $F$, and $c \in F^\times$. Let $R = \bigcap_{v \in S} \mathcal{O}_v$.
We have
\begin{align*}
F\pow{2} \cdot R^\times &= \bigcap_{v \in S} v^{-1}(2\zz), \\
(c \cdot F\pow{2} \cap (1 - F\pow{2} \cdot R^\times))\cdot R &= \bigcap \lbrace\mathfrak{m}_v \mid v \in S \cap \Odd(c) \rbrace,  \\
(c^{-1} \cdot R + c \cdot R^{-1})^{-1} \cup \lbrace 0 \rbrace &= \bigcap\lbrace\mathfrak{m}_v^{-v(c)} \mid v \in S \cap \Neg(c) \rbrace. 
\end{align*}
Here the operations are interpreted elementwise in the natural way,
so $A^{-1} = \{ x^{-1} \mid x \in A \setminus \{ 0 \} \}$,
$A + B = \{ a + b \mid a \in A, b \in B \}$ and so forth for subsets $A, B \subseteq F$.
\end{lem}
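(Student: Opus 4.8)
The plan is to establish each of the three set identities by proving the two inclusions separately. Throughout I use that the valuations in $S$, being pairwise distinct $\zz$-valuations, are pairwise independent, so that the Weak Approximation Theorem is at our disposal and $R^\times = \{u \in F^\times \mid v(u) = 0 \text{ for all } v \in S\}$. In each case one inclusion is a direct computation with valuations, and the other is obtained by constructing, for a given element of the right-hand side, an explicit representation of the form prescribed by the left-hand side, the auxiliary elements being produced by weak approximation; the degenerate cases ($0$, or an empty index set, where the stated conventions make both sides trivially equal) are immediate and I treat them first. For the first identity the inclusion $\subseteq$ is clear since $v(s^2u) = 2v(s) \in 2\zz$ for $u \in R^\times$ and $v \in S$; for $\supseteq$, given $x \in F^\times$ with all $v(x)$ even, weak approximation yields $t \in F^\times$ with $v(t) = v(x)/2$ for all $v \in S$, and then $xt^{-2} \in R^\times$ and $x = t^2(xt^{-2})$. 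This identity also lets me rewrite $1 - F\pow{2}\cdot R^\times$ as $\{1-y \mid v(y) \in 2\zz \text{ for all } v \in S\}$, which I use in the second identity.

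For the second identity, if $x = cs^2r$ with $cs^2 = 1-y$, all $v(y)$ even, and $r \in R$, then for $v \in S \cap \Odd(c)$ the value $v(cs^2) = v(c) + 2v(s)$ is odd, whereas $v(1-y)$ equals $0$ if $v(y) > 0$ and equals the even number $v(y)$ if $v(y) \leq 0$; the only possibility is $v(y) = 0$ with $v(cs^2) = v(1-y) > 0$, whence $v(x) = v(cs^2) + v(r) > 0$, giving $\subseteq$. For $\supseteq$, given $x$ with $v(x) > 0$ for all $v \in S \cap \Odd(c)$, I use weak approximation to choose $s \in F^\times$ with $v(cs^2) = 1$ for $v \in S \cap \Odd(c)$ (which forces $v(1-cs^2)=0$) and with $v(cs^2)$ a negative even integer that is at most $v(x)$ for $v \in S \setminus \Odd(c)$ (which forces $v(1-cs^2) = v(cs^2) \in 2\zz$); then $y := 1-cs^2$ and $r := x/(cs^2)$ satisfy all the required conditions, so $cs^2 \in c\cdot F\pow{2} \cap (1 - F\pow{2}\cdot R^\times)$, $r \in R$, and $x = (cs^2)r$ lies in the left-hand side.

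For the third identity, the inclusion $\subseteq$ follows from the fact that for $z = c^{-1}r_1 + cr_2^{-1}$ with $r_1 \in R$, $r_2 \in R \setminus \{0\}$, and $v \in S \cap \Neg(c)$ one has $v(c^{-1}r_1) \geq -v(c) > v(c) \geq v(cr_2^{-1})$, so that $v(z) = v(cr_2^{-1}) \leq v(c)$ and hence $v(z^{-1}) \geq -v(c)$. For $\supseteq$ the key point is the algebraic identity $c^{-1}r_1 + cr_2^{-1} = x^{-1}$, valid whenever $r_2 = xc^2/(c-xr_1)$ and $c - xr_1 \neq 0$; so, given $x \in F^\times$ with $v(x) \geq -v(c)$ for all $v \in S \cap \Neg(c)$, it suffices to find $r_1 \in R$ with $v(r_1) \geq 0$ and $v(c-xr_1) \leq v(x) + 2v(c)$ for all $v \in S$, since then $r_2 = xc^2/(c-xr_1)$ lies in $R \setminus \{0\}$ and $x = z^{-1}$. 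I would prescribe $v(r_1)$ separately at each $v \in S$ via weak approximation: where $v(x) + v(c) \geq 0$ (in particular at all $v \in S \cap \Neg(c)$) take $v(r_1)$ large enough that $v(xr_1) > v(c)$, so $v(c-xr_1) = v(c) \leq v(x) + 2v(c)$; where $v(x) + v(c) < 0$ (which forces $v \notin \Neg(c)$, hence $v(c) \geq 0$) take $v(r_1) = 0$, so $v(xr_1) = v(x) < v(c)$ and $v(c-xr_1) = v(x) \leq v(x) + 2v(c)$. In both cases $v(r_1) \geq 0$ and $v(xr_1) \neq v(c)$, so no cancellation occurs in $c - xr_1$, the estimates hold exactly, and $c-xr_1 \neq 0$ automatically.

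I expect the reverse inclusion of the third identity to be the main obstacle: one must select the valuations of the auxiliary element $r_1$ place by place so as to keep $r_1$ inside $R$ while at the same time controlling $v(c-xr_1)$, the delicate part being to avoid cancellation in the sum $c - xr_1$ by ensuring $v(xr_1) \neq v(c)$ at every place of $S$. The analogous constructions for the first two identities are comparatively routine applications of weak approximation once this bookkeeping is in place.
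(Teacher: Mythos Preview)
Your proof is correct. The paper itself does not give an argument for this lemma but simply cites \cite[Lemma 5.4]{DaansGlobal} (following \cite[Section 2, Step 3]{Koe16}); your argument is essentially the standard one that appears in those references, so there is nothing substantive to compare.
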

\begin{proof}
See \autocite[Lemma 5.4]{DaansGlobal} (following \autocite[Section 2, Step 3]{Koe16}).
\end{proof}
\begin{lem}\label{IJH}
Assume that, for any $(d+1)$-fold Pfister form $q = \llangle a_1, \ldots, a_d, b ]]_F$ the rings $\bigcap_{v \in \delta q} \mathcal{O}_v$ are existentially definable in $F$, uniformly in the parameters $a_1, \ldots, a_d, b$.
Then the sets $J_c(\llangle a_1, \ldots, a_d, b]]_F)$ and $H_c(\llangle a_1, \ldots, a_d, b]]_F)$ are existentially definable in $F$, uniformly in the parameters $a_1, \ldots, a_d, b, c$.
\end{lem}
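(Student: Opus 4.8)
The plan is to reduce the statement directly to \Cref{jacoblem}, applied with the finite set $S = \delta q$. The first step is to record that $\delta q$ is indeed finite: this is precisely \Cref{R:finiteSupportQF}, which uses that $\mc{V}$ has the finite support property. Granting this, \Cref{jacoblem} applies with $S = \delta q$ and $R = \bigcap_{v \in \delta q} \mc{O}_v$, and after unwinding the definitions of $J_c$ and $H_c$ it yields the identities
\begin{align*}
J_c(\llangle a_1, \ldots, a_d, b]]) &= \bigl(c \cdot F\pow{2} \cap (1 - F\pow{2}\cdot R^\times)\bigr)\cdot R, \\
H_c(\llangle a_1, \ldots, a_d, b]]) &= (c^{-1}\cdot R + c\cdot R^{-1})^{-1}\cup\{0\}.
\end{align*}
So it remains to show that these right-hand sides are existentially definable in $F$, uniformly in $a_1, \ldots, a_d, b, c$.

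Next I would invoke the hypothesis in the form of a single existential $\Lar$-formula $\varphi(x; y_1, \ldots, y_d, z)$ such that, for all admissible parameter tuples, $\varphi(x; a_1, \ldots, a_d, b)$ defines $R = \bigcap_{v \in \delta\llangle a_1, \ldots, a_d, b]]}\mc{O}_v$. From $\varphi$ one obtains uniform existential definitions of $R^\times$, of $R^{-1}$, and of $F\pow{2}$, and the crux is then the elementary observation that existential definability of subsets of $F$ (with a fixed parameter tuple) is stable under the elementwise ring operations $+$, $-$, $\cdot$, under $A \mapsto A^{-1}$, under adjoining $0$, and under finite intersections; in each case the new defining formula is produced by prepending existential quantifiers and taking conjunctions. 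Running the two displayed expressions through these closure properties gives explicit existential formulas for $J_c(\llangle a_1, \ldots, a_d, b]])$ and $H_c(\llangle a_1, \ldots, a_d, b]])$ with parameter slots for $a_1, \ldots, a_d, b$ (entering through $\varphi$) and for $c$ --- for instance $x \in J_c(\llangle a_1, \ldots, a_d, b]])$ iff there are $r, u, u', s_1, s_2 \in F$ with $\varphi(r; a_1,\ldots,a_d,b)$, $\varphi(u; a_1,\ldots,a_d,b)$, $\varphi(u'; a_1,\ldots,a_d,b)$, $uu' = 1$, $cs_1^2 = 1 - s_2^2 u$, and $x = cs_1^2 r$; for $H_c$ I would first clear the denominator $c^{-1}$ by multiplying through by $c$, which is legitimate since $c \in F^\times$. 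The admissible tuples, namely those with $a_1, \ldots, a_d, 1+4b$ and $c$ all nonzero, themselves form an existentially definable set, so uniformity is preserved.

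I do not expect a serious obstacle: the arithmetic content is entirely contained in \Cref{jacoblem} and in the hypothesis, and the remaining work is the bookkeeping of propagating existential definability through the elementwise operations while tracking uniformity. The one point I would check carefully is the degenerate case $\delta q = \emptyset$, where $R = F$ by our convention on empty intersections; here $F$ is trivially existentially definable, and one verifies that the two formulas then correctly return $J_c(q) = H_c(q) = F$, consistently with the convention fixed just after the definition of $J_c$ and $H_c$, so that no separate case split is needed.
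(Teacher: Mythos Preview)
Your proposal is correct and follows essentially the same approach as the paper: apply \Cref{jacoblem} with $S = \delta q$ to express $J_c(q)$ and $H_c(q)$ in terms of $R = \bigcap_{v \in \delta q} \mc{O}_v$ via elementwise field operations, then observe that these operations preserve uniform existential definability. The paper's proof is terser---it simply records the two identities and concludes---whereas you additionally spell out the closure properties, give an explicit formula, and address the degenerate case $\delta q = \emptyset$.
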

\begin{proof}
For $a_1, \ldots, a_d, b \in F$ with $a_1, \ldots, a_d, 1+4b \neq 0$ we set $$R_{a_1, \ldots, a_d, b} = \bigcap_{v \in \delta \llangle a_1, \ldots, a_d, b]]_F} \mc{O}_v.$$
By \Cref{jacoblem} we have
\begin{align*}
J_c(\llangle a_1, \ldots, a_d, b]]_F) &= (c \cdot F\pow{2} \cap (1 - F\pow{2} \cdot R_{a_1, \ldots, a_d, b}^\times))\cdot R_{a_1, \ldots, a_d, b} \enspace\text{and} \\
H_c(\llangle a_1, \ldots, a_d, b]]_F) &= (c^{-1} \cdot R_{a_1, \ldots, a_d, b} + c \cdot R_{a_1, \ldots, a_d, b}^{-1})^{-1} \cup \lbrace 0 \rbrace.
\end{align*}
Since the sets $R_{a_1, \ldots, a_d, b}$ are uniformly $\exists$-definable in $F$ by assumption, the statement follows.
\end{proof}
\begin{lem}\label{P:EtoAgeneral}
Let $V$ be a non-empty set of valuations on $F$.
The set $\bigcup_{v \in V} \mathfrak{m}_v$ is $\exists$-definable in $F$ if and only if $\bigcap_{v \in V} \mathcal{O}_v$ is $\forall$-definable in $F$.
\end{lem}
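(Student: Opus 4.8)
The plan is to exploit the elementary set-theoretic identity, valid for subsets of a field, that $x \in \bigcup_{v \in V} \mf{m}_v$ if and only if there is some $v \in V$ with $x \in \mf{m}_v$, i.e.\ with $x^{-1} \notin \mc{O}_v$ when $x \neq 0$; dually, $x \in \bigcap_{v \in V} \mc{O}_v$ precisely when for \emph{every} $v \in V$ one has $x \in \mc{O}_v$. The bridge between the two is the observation that a nonzero element $x$ lies in $\mf{m}_v$ if and only if $x^{-1}$ does not lie in $\mc{O}_v$; since $0 \in \mf{m}_v$ for all $v$ but $0 \in \mc{O}_v$ as well, one has to treat $0$ separately. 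Concretely, for $x \neq 0$,
\[
  x \in \bigcup_{v \in V} \mf{m}_v \iff x^{-1} \notin \bigcap_{v \in V} \mc{O}_v,
\]
using that $V \neq \emptyset$ so the union and intersection are over the same nonempty index set.

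From this, the argument is a short manipulation. Suppose first that $O := \bigcap_{v \in V} \mc{O}_v$ is $\forall$-definable, say $F \setminus O$ is defined by an existential formula $\varphi(y)$. Then
\[
  \bigcup_{v \in V} \mf{m}_v = \{ x \in F \mid x = 0 \ \vee\ (x \neq 0 \wedge \varphi(x^{-1})) \},
\]
which is a disjunction of two existentially definable conditions (note $x \neq 0$ is existentially definable as $\exists z\, (xz = 1)$, and substituting $x^{-1}$ for $y$ in $\varphi$ is harmless since we have already guaranteed $x \neq 0$; formally one replaces $\varphi(x^{-1})$ by $\exists z\,(xz = 1 \wedge \varphi(z))$), hence existentially definable. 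Conversely, suppose $M := \bigcup_{v \in V} \mf{m}_v$ is $\exists$-definable, say by $\psi(x)$. Since every $\mf{m}_v$ is contained in $\mc{O}_v$, we have $M \subseteq O$, so in particular $0 \in M$ and the case $x = 0$ causes no trouble. For $x \neq 0$ we have $x \notin O \iff x^{-1} \in M$, and since also no element of $M$ except $0$ can have its inverse in $O$, we get
\[
  F \setminus O = \{ y \in F \mid y \neq 0 \ \wedge\ \exists z\,(yz = 1 \wedge \psi(z)) \},
\]
which is existentially definable; hence $O$ is $\forall$-definable.

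I expect no serious obstacle here: the only points requiring a little care are the bookkeeping around $0$ (handled by the fact that $0$ lies in both every $\mf{m}_v$ and every $\mc{O}_v$, together with $V \neq \emptyset$), and the observation that passing from a formula in $y$ to the same condition applied to $x^{-1}$ stays within the existential fragment, which is standard once one writes $x^{-1} = z$ via the existential clause $\exists z\,(xz=1)$. This last manoeuvre is exactly the well-known fact that existentially definable sets in a field are closed under the "take inverses" operation on the nonzero locus, already implicit in the $A^{-1}$ notation of \Cref{jacoblem}.
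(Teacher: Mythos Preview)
Your proof is correct and follows essentially the same approach as the paper. The paper compresses the argument into the single set identity
\[
  \bigcap_{v \in V} \mathcal{O}_v = \Big(F \setminus \Big(\bigcup_{v \in V} \mathfrak{m}_v\Big)^{-1}\Big) \cup \{0\},
\]
whereas you unpack this identity into explicit formula manipulations (handling $0$ separately and replacing $x^{-1}$ by an existentially quantified $z$ with $xz=1$); the underlying observation, that for nonzero $x$ one has $x \in \mathfrak{m}_v$ if and only if $x^{-1} \notin \mathcal{O}_v$, is identical.
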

\begin{proof}
As in \autocite[Proposition 6.1]{DaansGlobal}, this follows from the observation
\begin{displaymath}
\bigcap_{v \in V} \mathcal{O}_v = \Big(F \setminus \Big(\bigcup_{v \in V} \mathfrak{m}_v\Big)^{-1}\Big) \cup \lbrace 0 \rbrace. \qedhere
\end{displaymath}
\end{proof}
\begin{stel}\label{T:UniversalDefinitionsGeneralTechnique}
Suppose that $X \subseteq F^{d+1}$ is a Pfister stratum for $S$, and that $X$ is $\exists$-definable in $F$.
Furthermore, assume that for $(d+1)$-fold Pfister forms $q = \llangle a_1, \ldots, a_d, b ]]_F$ the rings $\bigcap_{v \in \delta q} \mathcal{O}_v$ are $\exists$-definable, uniformly in the parameters $a_1, \ldots, a_d, b$.
Then $\bigcup_{v \in \mc{V} \setminus S} \mathfrak{m}_v$ is $\exists$-definable in $F$, and $\mc{O}_S = \bigcap_{v \in \mc{V} \setminus S} \mathcal{O}_v$ is $\forall$-definable in $F$.
\end{stel}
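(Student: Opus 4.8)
The plan is to combine \Cref{L:EtoAMainLem}, \Cref{IJH}, and \Cref{P:EtoAgeneral}, after fixing two auxiliary parameters in $F$. First I would observe that any two distinct $\zz$-valuations on $F$ are independent: a $\zz$-valuation has a discrete valuation ring, hence (its ring having only the primes $(0)$ and the maximal ideal) admits no nontrivial proper coarsening, so two distinct ones cannot share a common nontrivial coarsening. Therefore the approximation theorem applies to the finite set $S$ of $\zz$-valuations, and I fix $\pi \in F^\times$ with $v(\pi) = 1$ for all $v \in S$, so that $S \subseteq \Odd(\pi)$. The set $\Odd(\pi)$ is finite by the finite support property, so a second application of the approximation theorem to the finite set $S \cup (\Odd(\pi)\setminus S)$ yields $a_0 \in F^\times$ with $v(a_0) = 1$ for $v \in \Odd(\pi)\setminus S$ and $v(a_0) = 0$ for $v \in S$; then $\Odd(\pi)\setminus S \subseteq \Odd(a_0) \subseteq \mc{V}\setminus S$. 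Both $\pi$ and $a_0$ are now fixed elements of $F$, hence available as constants in $\Lar(F)$-formulas.

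Next I would invoke \Cref{L:EtoAMainLem} with these $\pi$ and $a_0$, which gives
\[ \bigcup_{v \in \mc{V}\setminus S} \mf{m}_v \;=\; \bigcup_{(a_1,\ldots,a_d,b)\in X_\pi} \Big(\bigcap_{i=0}^{d} J_{a_i} \cap J_{1+4b} \cap H_b\Big)\big(\llangle a_1,\ldots,a_{d-1},a_d\pi,b]]\big), \]
reducing the problem to showing that the right-hand side is $\exists$-definable. Since $X$ is $\exists$-definable and $\pi$ is a fixed parameter, $X_\pi$ — the preimage of $X$ under the $F$-polynomial map $(a_1,\ldots,a_d,b) \mapsto (a_1,\ldots,a_{d-1},a_d\pi,b)$ — is $\exists$-definable. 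The family of $(d+1)$-fold Pfister forms $\llangle a_1,\ldots,a_{d-1},a_d\pi,b]]$ has parameters that are polynomial functions of $(a_1,\ldots,a_d,b)$ over $F$, so the hypothesis on the rings $\bigcap_{v\in\delta q}\mc{O}_v$ transfers verbatim to this reparametrized family, and \Cref{IJH} then yields that each of the sets $J_{a_i}(\cdot)$ for $i = 0,\ldots,d$, together with $J_{1+4b}(\cdot)$ and $H_b(\cdot)$, evaluated at $\llangle a_1,\ldots,a_{d-1},a_d\pi,b]]$, is $\exists$-definable uniformly in $(a_1,\ldots,a_d,b)$ (the relevant value of the index $c$ being $a_0,\ldots,a_d$, then $1+4b$, then $b$, each a polynomial function of the parameters over $F$). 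A finite intersection of sets that are $\exists$-definable uniformly in the same tuple of parameters is again of this kind, and intersecting further with the $\exists$-definable condition $(a_1,\ldots,a_d,b)\in X_\pi$ preserves it; existentially quantifying out $a_1,\ldots,a_d,b$ then shows that the displayed union, namely $\bigcup_{v\in\mc{V}\setminus S}\mf{m}_v$, is $\exists$-definable in $F$.

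Finally, if $\mc{V}\setminus S$ is empty then $\mc{O}_S = F$ and there is nothing to prove; otherwise I would apply \Cref{P:EtoAgeneral} with $V = \mc{V}\setminus S$, which converts the $\exists$-definability of $\bigcup_{v\in\mc{V}\setminus S}\mf{m}_v$ just obtained into the desired $\forall$-definability of $\mc{O}_S = \bigcap_{v\in\mc{V}\setminus S}\mc{O}_v$. Apart from these three citations the argument is bookkeeping; the two points requiring care are the existence of $\pi$ and $a_0$ with the prescribed valuative behaviour — where the finite support property and the approximation theorem for the $\zz$-valuations in $S$ and in $\Odd(\pi)$ enter — and keeping every $\exists$-definability claim uniform in one and the same parameter tuple, so that the concluding projection step is legitimate.
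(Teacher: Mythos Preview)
Your proof is correct and follows essentially the same approach as the paper: fix $\pi$ and $a_0$ by Weak Approximation, apply \Cref{L:EtoAMainLem}, and conclude $\exists$-definability via \Cref{IJH} and then $\forall$-definability via \Cref{P:EtoAgeneral}. You are somewhat more explicit than the paper about why approximation applies (independence of distinct $\zz$-valuations), about the uniformity bookkeeping, and about the degenerate case $\mc{V}\setminus S = \emptyset$, but the structure is the same.
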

\begin{proof}
In view of \Cref{P:EtoAgeneral} we only need to show that $\bigcup_{v \in \mc{V} \setminus S} \mathfrak{m}_v$ is $\exists$-definable in $F$.

By Weak Approximation, we can find $\pi \in F^\times$ such that $v(\pi) = 1$ for all $v \in S$, in particular $S \subseteq \Odd(\pi)$. Applying Weak Approximation again, we can find $a_0 \in F^\times$ such that $v(a_0) = 1$ if $v \in \Odd(\pi) \setminus S$ and $v(a_0) = 0$ if $v \in S$.
By \Cref{L:EtoAMainLem} we have
$$ \bigcup_{v \in \mc{V} \setminus S} \mathfrak{m}_v = \bigcup_{(a_1, \ldots, a_d, b) \in X_\pi} \Big(\bigcap_{i=0}^{d} J_{a_i} \cap J_{1+4b} \cap H_b\Big)(\llangle a_1, \ldots, a_{d-1}, a_d\pi, b]]_F).$$
This set is $\exists$-definable by the hypotheses and \Cref{IJH}.
\end{proof}

We have now discussed an abstract technique to obtain universal definability of rings of $S$-integers in fields.
This technique relies on finding an $\exists$-definable Pfister stratum $X \subseteq F^{d+1}$.
We discuss in the remainder of this section how such a set can be obtained in certain cases.

Recall the standing hypotheses of the section: we let $F$ be a field, fix $d \in \nat^+$, and we let $\mc{V}$ be a set of $\zz$-valuations with the finite support property and such that $[Fv : (Fv)\pow{2}] < 2^d$ for all dyadic $v \in \mc{V}$.

Let $S \subseteq \Val$ be finite, and for each $v \in \Val$, let $q_v$ be a $d$-fold Pfister form over $Fv$.
To the finite tuple $(q_v)_{v \in S}$ we can associate a subset of $F^{d+1}$:
$$\Phi_{(q_v)_{v \in S}} = \left\lbrace (a_1, \ldots, a_{d}, b) \in F^{d+1} \enspace\middle|\enspace \forall v \in S : \begin{array}{l}
a_1, \ldots, a_{d-1}, b, 1+4b \in \mathcal{O}_v^\times, \\
\partial_v \llangle a_1, \ldots, a_{d}, b ]]_F = [q_v] + I_q^{d+1} (Fv)
\end{array}
\right\rbrace.$$
\begin{lem}\label{Phidefinable}
Let $S \subseteq \mc{V}$ be finite.
For each $v \in S$, let $q_v$ be an anisotropic $d$-fold Pfister form over $Fv$.
Assume that $\mathcal{O}_v$ is $\exists$-definable in $F$ for each $v \in S$.
Then $\Phi_{(q_v)_{v \in S}}$ is $\exists$-definable in $F$.
\end{lem}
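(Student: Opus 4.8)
The plan is to reduce everything to the hypothesis that each $\mathcal{O}_v$ with $v \in S$ is $\exists$-definable, and then express all the conditions in the definition of $\Phi_{(q_v)_{v\in S}}$ using finitely many existential formulas with parameters. First I would note that $S$ is a fixed finite set, so it suffices to handle each $v \in S$ separately and take a finite conjunction; a finite conjunction of $\exists$-formulas is again (equivalent to) an $\exists$-formula over a non-algebraically closed field. For a single $v \in S$, the condition ``$a_1, \ldots, a_{d-1}, b, 1+4b \in \mathcal{O}_v^\times$'' is $\exists$-definable once $\mathcal{O}_v$ is: membership in $\mathcal{O}_v$ is $\exists$-definable by hypothesis, and $x \in \mathcal{O}_v^\times$ is equivalent to $x \in \mathcal{O}_v \wedge x^{-1} \in \mathcal{O}_v$, hence also $\exists$-definable (here one uses the standard device $\exists y\,(xy = 1 \wedge y \in \mathcal{O}_v)$ to avoid dividing).

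The substantive part is encoding the residue condition $\partial_v\llangle a_1, \ldots, a_d, b]] = [q_v] + I_q^{d+1}Fv$. The key observation is that, on the set where $a_1, \ldots, a_{d-1}, b, 1+4b \in \mathcal{O}_v^\times$, \Cref{P:Pfisterresidue} gives a completely explicit description of $\partial_v$: after possibly rescaling $a_d$ by a square (which does not change the Pfister form) so that $v(a_d) \in \{0,1\}$, the residue is $0$ if $v(a_d)$ is even and is the class of $\llangle \overline{a_1}, \ldots, \overline{a_{d-1}}, \overline{b}]]_{Fv}$ if $v(a_d)$ is odd. Since $q_v$ is anisotropic (hence nonzero in $I_q^d Fv$), the condition forces $v(a_d)$ to be odd, and then it becomes: $\llangle \overline{a_1}, \ldots, \overline{a_{d-1}}, \overline{b}]]_{Fv} \cong q_v$ as $d$-fold Pfister forms over $Fv$ (using that anisotropic Pfister forms are determined up to isometry by their class, and here $I_q^{d+1}Fv$ may or may not be zero — if it is not, one should phrase this as equality of classes modulo $I_q^{d+1}Fv$, which is again a first-order condition on the residues). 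Writing $q_v = \llangle c_1, \ldots, c_{d-1}, c]]_{Fv}$ for fixed $c_1, \ldots, c_{d-1}, c \in Fv$, the isometry $\llangle \overline{a_1}, \ldots, \overline{a_{d-1}}, \overline{b}]]_{Fv} \cong \llangle c_1, \ldots, c_{d-1}, c]]_{Fv}$ is an elementary (first-order, in fact $\exists$) condition on the tuple $(\overline{a_1}, \ldots, \overline{a_{d-1}}, \overline{b}) \in (Fv)^d$, since isometry of two quadratic forms of bounded dimension is witnessed by the existence of a transformation matrix, i.e.\ by an $\exists$-formula in the entries.

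It then remains to pull this condition on residues back to an $\exists$-condition on $(a_1, \ldots, a_d, b) \in F^{d+1}$. The mechanism is: the residue map $\mathcal{O}_v^\times \to (Fv)^\times$ is a surjective group homomorphism, and $\overline{a} = \overline{a'}$ in $Fv$ iff $a - a' \in \mathfrak{m}_v$ iff (for $a, a' \in \mathcal{O}_v^\times$) $a/a' \in 1 + \mathfrak{m}_v$; and $1 + \mathfrak{m}_v$ is $\exists$-definable because $\mathfrak{m}_v = \{ x \in \mathcal{O}_v \mid x = 0 \text{ or } x^{-1} \notin \mathcal{O}_v\}$ — but since we want an $\exists$-definition and $\mathfrak{m}_v$ as written is only obviously $\forall$-definable, one instead uses that $1 + \mathfrak{m}_v = \{ x \in \mathcal{O}_v^\times \mid x \in \mathcal{O}_v^\times, \ (x-1)/\pi \in \mathcal{O}_v \text{ for some/any uniformiser}\}$, or more cleanly parametrises the residue condition through the representatives $c_i$: the condition ``$\llangle \overline{a_1}, \ldots, \overline{a_{d-1}}, \overline{b}]]_{Fv} \cong q_v$'' can be lifted to ``there exist $a_1', \ldots, a_{d-1}', b' \in F$ with $\llangle a_1', \ldots, a_{d-1}', b']]_F$ a chosen lift of $q_v$, all of $a_i', b'$ in $\mathcal{O}_v^\times$, and $a_i / a_i', b/b' \in \mathcal{O}_v^\times$ with residue $1$''. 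The main obstacle I anticipate is exactly this bookkeeping of lifting a first-order condition on the residue field $Fv$ to a first-order condition on $F$ using only the $\exists$-definability of $\mathcal{O}_v$ (and not of $\mathfrak{m}_v$); the cleanest route is probably to invoke \Cref{jacoblem}, whose first identity gives an $\exists$-definition of $F^{(2)} \cdot R_v^\times$ and whose second gives an $\exists$-handle on $\mathfrak{m}_v$, allowing one to express ``$v(a_d)$ is odd'' and ``prescribed residue classes'' existentially; once that translation is in place, the remainder is a routine but slightly lengthy verification that the resulting formula indeed defines $\Phi_{(q_v)_{v \in S}}$.
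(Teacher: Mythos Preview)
Your approach is essentially the paper's, and the plan is sound. The one place where you make things harder than necessary is the $\exists$-definability of $\mathfrak{m}_v$: since each $v \in S$ is a $\zz$-valuation, it has a uniformiser $\pi \in F^\times$, and then $x \in \mathfrak{m}_v$ if and only if $x/\pi \in \mathcal{O}_v$, so $\mathfrak{m}_v$ is immediately $\exists$-definable once $\mathcal{O}_v$ is. This dissolves all the bookkeeping worries you raise at the end: lifting an existential formula from $Fv$ to $F$ is done by replacing equalities $\overline{f} = \overline{g}$ by $f - g \in \mathfrak{m}_v$ and inequalities by $f - g \in \mathcal{O}_v^\times$, and ``$v(a_d)$ odd'' becomes $a_d \in \pi F\pow{2} \cdot \mathcal{O}_v^\times$. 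No detour through \Cref{jacoblem} is needed. The paper also records that equality of classes of $d$-fold Pfister forms modulo $I_q^{d+1}(Fv)$ is equivalent to isometry of the forms themselves, which is the clean reason the residue-field condition is existential.
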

\begin{proof}
It suffices to show the statement when $S = \lbrace v \rbrace$, since the general case then follows by the fact that finite intersections of existentially definable sets are again existentially definable.
If $\mathcal{O}_v$ is $\exists$-definable in $F$, then so is $\mathfrak{m}_v$:
letting $\pi \in F$ be a uniformiser for $v$, we have for an element $x \in F$ that $x \in \mathfrak{m}_v$ if and only if $\frac{x}{\pi} \in \mathcal{O}_v$.
Also $\mathcal{O}_v^\times$ is then existentially definable.

Furthermore, when $a_1, \ldots, a_{d-1}, b, 1+4b \in \mathcal{O}_v^\times$, then, in view of \Cref{L:PfisterNontrivialSlot}, $\partial_v \llangle a_1, \ldots, a_d, b ]]_F = [q_v] + I_q^{d+1} (Fv)$ is possible only when $v(a_d)$ is odd.
The property that $v(a_d)$ is odd is equivalent to $a_d \in \pi F\pow{2} \cdot \mathcal{O}_v^\times$ and thus existentially definable.
Furthermore, in this case we have $\partial_v \llangle a_1, \ldots, a_d, b ]]_F = [\llangle \overline{a_1}, \ldots, \overline{a_{d-1}}, \overline{b}]]_{Fv}] + I_q^{d+1} (Fv)$ by \Cref{P:Pfisterresidue}.
We thus wish to express by an existential formula that the classes of
$\llangle \ovl{a_1}, \dotsc, \ovl{a_{d-1}}, \ovl{b}]]_{Fv}$ and $q_v$ in $I_q^d(Fv)/I_q^{d+1}(Fv)$ are equal.
This equality is equivalent to the condition that $\llangle \ovl{a_1}, \ldots, \ovl{a_{d-1}}, \ovl{b}]]_{Fv} \cong q_v$
by \cite[Corollary 23.9]{ElmanKarpenkoMerkurjev} (observing that both sides are Pfister forms and therefore represent the value $1$).
Isometry of the forms $\llangle \ovl{a_1}, \ldots, \ovl{a_{d-1}}, \ovl{b}]]_{Fv}$ and $q_v$
can clearly be expressed over $Fv$ by an existential $\Lar(Fv)$-formula $\rho$,
and this can be transformed to an existential formula over $F$ as follows:
each constant from $Fv$ appearing in $\rho$ is replaced by an arbitrary lift to $\mc{O}_v$,
for each quantifier $\exists y$ appearing in $\rho$ one adds the condition $y \in \mc{O}_v$,
every atomic formula $\ovl{f} = \ovl{g}$ appearing in $\rho$ is replaced by an equivalence modulo $\mf{m}_v$ (i.e.~$f-g \in \mf{m}_v$),
and every atomic formula $\ovl{f} \neq \ovl{g}$ is replaced by $f-g \in \mc{O}_v^\times$.
Since the side conditions $y \in \mc{O}_v$, $f-g \in \mf{m}_v$ and $f-g \in \mc{O}_v^\times$
are all expressible by existential formulas,
this transformation yields an existential formula over $F$.
\end{proof}

We will now establish some cases where the quadratic forms $(q_v)_{v \in S}$ can be chosen such that $\Phi_{(q_v)_{v \in S}}$ is a Pfister stratum.
These cases rely on properties of the map $$\oplus \partial_v : I^{d+1}_q F / I^{d+2}_q F \to \bigoplus_{v \in \mathcal{V}} I^{d}_q (Fv) / I^{d+1}_q (Fv)$$ obtained by taking the sum of the residue morphisms;
this map is well-defined when $\mc{V}$ has the finite support property by virtue of \Cref{R:finiteSupportQF}.
On the one hand, we require the property that all elements of $I^{d+1}_q F / I^{d+2}_q F$ are classes of $(d+1)$-fold Pfister forms.
\begin{defi}\label{D:Linkage}
For $d \in \nat^+$, we say that \emph{$d$-fold Pfister forms over $F$ are linked}\index{linked} if every element of $I^d_qF/I^{d+1}_qF$ is the class of a $d$-fold Pfister form.
\end{defi}
On the other hand, we will study the cokernel of the map $\oplus \partial_v$.
We will show the following abstract result, which we will then combine with the results of \Cref{sec:reciprocity}.
\begin{lem}\label{L:PivotPropertyCondition}
Assume that $(d+1)$-fold Pfister forms over $F$ are linked.
Consider the exact sequence
$$ I^{d+1}_q F/I^{d+2}_q F \xrightarrow{\oplus \partial_v} \bigoplus_{v \in \mathcal{V}} I^{d}_q (Fv) / I^{d+1}_q (Fv) \rightarrow \Coker(\oplus \partial_v) \rightarrow 0. $$
If $\Coker(\oplus \partial_v) \neq 0$ and every component map $I^{d}_q (Fv) / I^{d+1}_q (Fv) \to \Coker(\oplus \partial_v)$ is surjective, then for every $S \subseteq \mathcal{V}$ with $\lvert S \rvert \in 2\nat + 1$ and for each $v \in S$ there exists a $d$-fold Pfister form $q_v$ over $Fv$ such that $\Phi_{(q_v)_{v \in S}}$ is a Pfister stratum for $S$.
\end{lem}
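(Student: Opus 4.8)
The plan is to produce, for each odd-cardinality finite $S \subseteq \mc{V}$ and each $v \in S$, a $d$-fold Pfister form $q_v$ over $Fv$ so that the set $\Phi_{(q_v)_{v \in S}}$ satisfies conditions \eqref{it:propA} and \eqref{it:propB} of \Cref{D:PfisterStratum}. I would begin by fixing notation: let $C = \Coker(\oplus \partial_v)$ and let $\pi \colon \bigoplus_{v \in \mc{V}} I_q^d Fv / I_q^{d+1} Fv \to C$ be the quotient map, so by hypothesis $C \neq 0$ and each component $\pi_v \colon I_q^d Fv / I_q^{d+1} Fv \to C$ is surjective. The basic idea, abstracting Koenigsmann's original argument, is that in order for an element $\alpha \in \bigoplus_{v} I_q^d Fv / I_q^{d+1} Fv$ supported on $S \cup \{w\}$ (for $w \notin S$) to lie in the image of $\oplus \partial_v$, it is necessary and sufficient that $\pi(\alpha) = 0$; we want to rig the prescribed residues $q_v$ at $v \in S$ so that the total $C$-class of anything in $\Phi$ is controlled, and so that for any prescribed $w \notin S$ we can solve for the remaining residue at $w$.

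The key construction: since each $\pi_v$ is surjective and $C \neq 0$, pick a nonzero $c \in C$ and, for each $v \in S$, choose a $d$-fold Pfister form $q_v$ over $Fv$ with $\pi_v([q_v] + I_q^{d+1} Fv) = c$; here I use that $d$-fold Pfister forms over $Fv$ are linked (this should follow from the linkage hypothesis on $(d+1)$-fold Pfister forms over $F$ by a residue argument, or I would invoke the relevant statement from \cite{Andromeda-1}/\cite{DaansLinkage}) so that a form with the desired class actually exists, and I can take it anisotropic. Because $|S|$ is odd, $\sum_{v \in S} \pi_v([q_v]) = |S| \cdot c = c \neq 0$ in $C$ (using that $C$ is $2$-torsion, being a quotient of a $2$-torsion group, so $|S| \cdot c = c$). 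Now verify \eqref{it:propB}: given $w \in \mc{V} \setminus S$, by surjectivity of $\pi_w$ choose a $d$-fold Pfister form $q_w$ over $Fw$ with $\pi_w([q_w]) = c$ as well; then $\sum_{v \in S \cup \{w\}} \pi_v([q_v]) = (|S|+1)c = 0$ since $|S|+1$ is even and $C$ is $2$-torsion, so the tuple $(q_v)_{v \in S \cup \{w\}}$, extended by $0$ elsewhere, lies in $\ker \pi = \operatorname{im}(\oplus \partial_v)$. Pull it back to $\alpha \in I_q^{d+1}F/I_q^{d+2}F$, and since $(d+1)$-fold Pfister forms over $F$ are linked, $\alpha = [\llangle a_1, \dots, a_d, b]]] $ for suitable $a_i, b \in F$; using \Cref{P:Pfister-semilocal-presentation} (applied to the finite set $S \cup \{w\}$) I may arrange $a_1, \dots, a_{d-1}, b, 1+4b \in \mc{O}_v^\times$ for all $v \in S \cup \{w\}$, and by \Cref{P:Pfisterresidue} (after multiplying $a_d$ by a square) arrange $v(a_d) \in \{0,1\}$ at these places, with $v(a_d)$ odd exactly when the residue is nonzero. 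This shows $(a_1, \dots, a_d, b) \in \Phi_{(q_v)_{v \in S}}$, that $b \in \mc{O}_w$, and that $\delta\llangle a_1, \dots, a_d, b]] \subseteq S \cup \{w\}$ (since the residue vanishes outside the support of $\alpha$, which is contained in $S \cup \{w\}$ — here the finite support property ensures only finitely many $v$ could contribute and the presentation controls them), giving \eqref{it:propB}.

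For \eqref{it:propA}: take any $(x_1, \dots, x_d, y) \in \Phi_{(q_v)_{v \in S}}$ and set $q = \llangle x_1, \dots, x_d, y]]$. The defining conditions of $\Phi$ already force $x_1, \dots, x_{d-1}, y, 1+4y \in \mc{O}_v^\times$ for $v \in S$, which immediately gives $S \cap \big(\bigcup_{i=1}^{d-1}\Odd(x_i) \cup \Odd(1+4y) \cup \Neg(y)\big) = \emptyset$ and also $x_1, \dots, x_d, y, 1+4y \neq 0$; the only subtle point is that we also need $x_d \neq 0$, which follows because $\partial_v q = [q_v] \neq 0$ forces $q$ anisotropic over $F_v$, hence $x_d \neq 0$. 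Finally, to see $\delta q \nsubseteq S$: the total $C$-class $\sum_{v \in \mc{V}} \pi_v(\partial_v q)$ is zero (it is the image under $\pi$ of $(\oplus\partial_v)(q)$), but $\sum_{v \in S} \pi_v(\partial_v q) = \sum_{v \in S} \pi_v([q_v]) = |S| c = c \neq 0$; therefore there must exist $v \in \mc{V} \setminus S$ with $\pi_v(\partial_v q) \neq 0$, and in particular $\partial_v q \neq 0$, so such a $v$ lies in $\delta q \setminus S$, establishing $\delta q \nsubseteq S$.

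The main obstacle I anticipate is the step that turns a class $\alpha \in I_q^{d+1}F/I_q^{d+2}F$ with prescribed residues into an honest presentation $\llangle a_1, \dots, a_d, b]]$ whose residues are \emph{exactly} the prescribed data and vanish elsewhere, with the unit conditions of $\Phi$ simultaneously satisfied at all places of $S \cup \{w\}$: this requires carefully combining linkage of $(d+1)$-fold Pfister forms over $F$, the simultaneous good-presentation result \Cref{P:Pfister-semilocal-presentation}, the explicit residue computation \Cref{P:Pfisterresidue}, and the finite support property to control spurious ramification — and one must check that rescaling $a_d$ by a square to normalise $v(a_d)$ does not disturb the unit conditions already arranged on $a_1,\dots,a_{d-1},b$. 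Everything else is bookkeeping with the fact that $C$ is $2$-torsion and $|S|$ is odd.
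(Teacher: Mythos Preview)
Your proof is correct and follows essentially the same argument as the paper's: fix a nonzero element $c$ of the cokernel, choose $q_v$ with image $c$ for each $v \in S$ (using that linkage of $(d+1)$-fold Pfister forms over $F$ descends via the surjective residue maps to linkage of $d$-fold Pfister forms over each $Fv$), and use the $2$-torsion/odd-parity trick both to force $\delta q \nsubseteq S$ for property~\eqref{it:propA} and to realise the tuple $(q_v)_{v \in S \cup \{w\}}$ as the residues of a single Pfister form for property~\eqref{it:propB}. Your anticipated obstacle is not a genuine difficulty: the set $\delta q$ is read off directly from the support of $(\oplus\partial_v)([q])$, which equals $S \cup \{w\}$ by construction, so no rescaling of $a_d$ or analysis of spurious ramification is needed---you only need \Cref{P:Pfister-semilocal-presentation} applied to $S \cup \{w\}$ to obtain the unit conditions defining $\Phi$ together with $b \in \mc{O}_w$.
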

\begin{proof}
Denote by $N$ the natural map $\bigoplus_{v \in \mathcal{V}} I^{d}_q (Fv) / I^{d+1}_q (Fv) \rightarrow \Coker(\oplus \partial_v)$.
Observe that $\Coker(\oplus \partial_v)$ is $2$-torsion, since $\bigoplus_{v \in \mathcal{V}} I^{d}_q (Fv) / I^{d+1}_q (Fv)$ is.
Let $S \subseteq \mc{V}$ with $\lvert S \rvert \in 2\nat + 1$.
Note that, since $(d+1)$-fold Pfister forms over $F$ are linked, and $\partial_v$ is surjective and maps the class of a $(d+1)$-fold Pfister form to the class of a $d$-fold Pfister form for any $v \in \mc{V}$ (see \Cref{C:residueHomomorphism}),
also $d$-fold Pfister forms over $Fv$ are linked for all $v \in \mc{V}$.
Fix $\alpha \in \Coker(\oplus \partial_v) \setminus \lbrace 0 \rbrace$, and for each $v \in S$, fix a $d$-fold Pfister form over $Fv$ such that $N([q_v]) = \alpha$.
We claim that $(q_v)_{v \in S}$ is the desired tuple of forms making $\Phi_{(q_v)_{v \in S}}$ into a Pfister stratum for $S$.

Consider a tuple $(a_1, \ldots, a_d, b) \in \Phi_{(q_v)_{v \in S}}$ and let $q = \llangle a_1, \ldots, a_d, b ]]_F$.
Since $\partial_v q = [q_v] + I_q^{d+1} (Fv) \neq 0$ in $I_q^d (Fv) / I_q^{d+1} (Fv)$ for $v \in S$, we have $S \subseteq \delta q$.
We also have $N(\partial_v q) = \alpha$ for $v \in S$.
As $$N\Big(\sum_{v \in \mc{V}} \partial_v q\Big) = (N \circ \oplus \partial_v)([q]) = 0 \neq \alpha = \lvert S \rvert \alpha = N\Big(\sum_{v \in S} \partial_v q\Big),$$
we must have  $\partial_w q \neq 0$ in $I_q^d (Fw) / I_q^{d+1} (Fw)$ for some $w \in \mc{V} \setminus S$, i.e.~$w \in \delta q$.
This concludes the proof that $\Phi_{(q_v)_{v \in S}}$ satisfies \eqref{it:propA} in \Cref{D:PfisterStratum}.

For \eqref{it:propB} in that definition, consider some $w \in \mc{V} \setminus S$.
Fix a $d$-fold Pfister form $q_w$ over $Fw$ such that $N([q_w]) = \alpha$.
Since $N(\sum_{v \in S \cup \lbrace w \rbrace} [q_v]) = (\lvert S \rvert + 1)\alpha = 0$,
$\sum_{v \in S \cup \{ w \}} [q_v]$ must be the image under $\oplus \partial_v$ of the class of some $(d+1)$-fold Pfister form $q$ over $F$.
By \Cref{P:Pfister-semilocal-presentation} we can find $a_1, \ldots, a_{d-1}, b \in \bigcap_{v \in S} \mathcal{O}_v^\times$
and $a_d \in F^\times$ such that $1+4b \in \bigcap_{v \in S}\mc{O}_v^\times$ and $q \cong \llangle a_1, \ldots, a_d, b]]_F$.
Then $(a_1, \ldots, a_d, b) \in \Phi_{(q_v)_{v \in S}}$ satisfies $\delta\llangle a_1, \dotsc, a_d, b]] = \delta q = S \cup \{ w \}$.
\end{proof}

We will now apply this lemma to the situation of a regular function field $F/K$ with $\mathcal{V} = \Val(F/K)$ and $d = \dim_2 K \in \nat^+$.
\begin{opm}\label{R:standingHypPfisterStrata-ff}
In this situation the standing hypotheses of this section are satisfied,
in that $\mathcal{V}$ has the finite support property by \Cref{E:finiteSupport},
and for all dyadic $v \in \mathcal{V}$ (which only occur in characteristic $2$)
we have $\dim_2(Fv) = \dim_2 K = d$ since $Fv/K$ is finite,
so $[Fv : (Fv)^{(2)}] = 2^{d-1} < 2^d$.
\end{opm}

\begin{prop}\label{P:PivotPropertyCondition-ff}
Let $K$ be a field, and $d = \dim_2 K$.
Suppose that $1 \leq d < \infty$ and $I^d_qK \neq 0$.
Let $F/K$ be a regular function field in one variable,
assume that $(d+1)$-fold Pfister forms over $F$ are linked,
and write $\mathcal{V} = \Val(F/K)$.
Then for every $S \subseteq \mathcal{V}$ with $\lvert S \rvert \in 2\nat + 1$ and for each $v \in S$ there exists an anisotropic $d$-fold Pfister form $q_v$ over $Fv$ such that $\Phi_{(q_v)_{v \in S}}$ is a Pfister stratum for $S$ (with respect to $\mathcal{V}$).
\end{prop}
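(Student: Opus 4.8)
The plan is to derive \Cref{P:PivotPropertyCondition-ff} directly from \Cref{L:PivotPropertyCondition}; the only work is to check the hypotheses of that lemma, and this is exactly what \Cref{T:Reciprocity-Main} and \Cref{lem:transfer-surj-kernel} are designed to provide. So the proposition is essentially a corollary, and there is no serious obstacle — the one place to be a little careful is the bookkeeping identifying the cokernel of $\oplus\partial_v$ with $I_q^d K$ compatibly with the component maps, and confirming that the resulting forms $q_v$ can be taken anisotropic.

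First I would record the consequences of $d = \dim_2 K$. Since $F/K$ is a regular function field in one variable, $\dim_2 F = d+1$, and for each $v \in \mathcal{V} = \Val(F/K)$ the residue field $Fv$ is a finite extension of $K$, so $\dim_2(Fv) = d$. As explained just after the statement of \Cref{T:Reciprocity-Main}, it follows that $I_q^{d+2} F = 0$ and $I_q^{d+1}(Fv) = 0$ for every $v \in \mathcal{V}$; in particular we may identify $I_q^{d+1}F/I_q^{d+2}F = I_q^{d+1}F$ and $I_q^d(Fv)/I_q^{d+1}(Fv) = I_q^d(Fv)$. Moreover, if $v$ is dyadic then $\charac K = 2$ and $[Fv : (Fv)\pow{2}] = [K : K\pow{2}] = 2^{d-1} < 2^d$, and $\mathcal{V}$ has the finite support property, so the standing hypotheses of the section (and all the objects occurring in \Cref{L:PivotPropertyCondition} and in the statement) make sense.

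Next I would identify $\Coker(\oplus\partial_v)$. By \Cref{T:Reciprocity-Main} the sequence
\[ I_q^{d+1}F \xrightarrow{\oplus \partial_v} \bigoplus_{v \in \mathcal{V}} I_q^d(Fv) \xrightarrow{\oplus s_v^\ast} I_q^d K \]
is exact, and by \Cref{lem:transfer-surj-kernel} each transfer map $I_q^d(Fv) \to I_q^d K$ — hence also $\oplus s_v^\ast$ — is surjective. Therefore $\oplus s_v^\ast$ induces an isomorphism $\Coker(\oplus\partial_v) \xrightarrow{\sim} I_q^d K$, and under this isomorphism the component map $I_q^d(Fv)/I_q^{d+1}(Fv) \to \Coker(\oplus\partial_v)$ becomes the transfer $s_v^\ast \colon I_q^d(Fv) \to I_q^d K$, which is again surjective by \Cref{lem:transfer-surj-kernel}. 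Since $I_q^d K \neq 0$ by hypothesis, we also have $\Coker(\oplus\partial_v) \neq 0$.

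All hypotheses of \Cref{L:PivotPropertyCondition} are now in place — $(d+1)$-fold Pfister forms over $F$ being linked by assumption — and it yields, for every $S \subseteq \mathcal{V}$ with $\lvert S \rvert$ odd, $d$-fold Pfister forms $q_v$ over $Fv$ (for $v \in S$) such that $\Phi_{(q_v)_{v\in S}}$ is a Pfister stratum for $S$. It remains to see that these $q_v$ may be taken anisotropic: the construction in \Cref{L:PivotPropertyCondition} picks each $q_v$ as a preimage, under the surjective component map, of a fixed non-zero element of $\Coker(\oplus\partial_v)$, so $[q_v] \neq 0$ in $I_q^d(Fv)/I_q^{d+1}(Fv) = I_q^d(Fv)$; and a $d$-fold Pfister form whose Witt class is non-zero is anisotropic. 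This completes the plan — the whole argument is formal once the cokernel has been computed, so I expect the second step to be the only one demanding attention.
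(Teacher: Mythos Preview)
Your proposal is correct and follows exactly the approach the paper indicates: the paper presents \Cref{P:PivotPropertyCondition-ff} as an immediate consequence of \Cref{L:PivotPropertyCondition} together with \Cref{T:Reciprocity-Main} and \Cref{lem:transfer-surj-kernel}, without spelling out a separate proof. Your write-up in fact supplies more detail than the paper (the verification of the standing hypotheses, the explicit identification of the cokernel, and the anisotropy of the $q_v$), all of which is correct.
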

\begin{proof}
By \Cref{T:Reciprocity-Main}, we have an exact sequence
\[ I_q^{d+1}F \xrightarrow{\oplus \partial_v} \bigoplus_{v \in \Val} I_q^d(Fv) \to I_q^d K .\]
As noted after the statement of the theorem,
by the dimension condition $d = \dim_2 K$,
we may identify $I_q^{d+1}F$ with $I_q^{d+1}F/I_q^{d+2}F$,
and similarly for the other terms.
By \Cref{lem:transfer-surj-kernel}, each component map $I_q^d(Fv) \to I_q^d K$
is surjective, so in particular we may identify $I_q^d K$
with the cokernel of
\[ I^{d+1}_q F/I^{d+2}_q F \xrightarrow{\oplus \partial_v} \bigoplus_{v \in \mathcal{V}} I^{d}_q (Fv) / I^{d+1}_q (Fv) .\]
Now \Cref{L:PivotPropertyCondition} applies.
\end{proof}

\begin{opm}\label{R:global-fields-Pfister-stratum}
Let $F$ be a non-real global field.
Let $\Val$ be the set of all $\zz$-valuations on $F$ and $S \subseteq \Val$ such that $\lvert S \rvert \in 2\nat + 1$.
Note that for every dyadic $v \in \Val$ we have $[Fv : (Fv)\pow{2}] = 1$.
Furthermore, $2$-fold Pfister forms are linked, $I_q^2 F \neq 0$, and $I_q^3F = 0$.
The Albert-Brauer-Hasse-Noether Theorem yields an exact sequence for $I_q^2 F$.
Similar to the proof of \Cref{P:PivotPropertyCondition-ff},
one can use this exact sequence together with \Cref{L:PivotPropertyCondition}
to obtain a Pfister stratum for $S$.
An analogous construction was used in \cite{DaansGlobal}, see e.g.~the definition of the set $\Phi_u^S$ on page $1864$.
\end{opm}

\section{Application to function fields in one variable}\label{sect:universalFunctionfield}
We will in this section consider function fields in one variable $F/K$.
We work with the set $\mc{V}(F/K)$ of all $\zz$-valuations on $F$ which are trivial on $K$.
For $d = \dim_2 K$ (in the sense of \Cref{sec:reciprocity}) and assuming $1 \leq d < \infty$,
we will apply the results of the previous section with $\mathcal{V} = \Val(F/K)$.
For a $(d+1)$-fold Pfister form $q$ over $F$, we will -- as we did there -- denote by $\delta q$ the set of valuations in $\mc{V}(F/K)$ for which $\partial_v q \neq 0$.
By \Cref{P:deltavsDelta}, this is equivalently the set of $v \in \mc{V}(F/K)$ such that $q$ is anisotropic over the henselisation $F_v$.
Note that with $\mathcal{V} = \Val(F/K)$ we are in the situation of the previous section, see \Cref{R:standingHypPfisterStrata-ff}.
In particular, the set $\delta q$ is always finite by \Cref{R:finiteSupportQF}.

For a function field in one variable $F/K$ with $1 \leq d = \dim_2 K < \infty$, consider now the following conditions:
\begin{enumerate}[$(A)$]
\item\label{hyp1} $I^d_q K \neq 0$,
\item\label{hyp2} $(d+1)$-fold Pfister forms over $F$ are linked,
\item\label{hyp3} For a $(d+1)$-fold Pfister form $q = \llangle a_1, \ldots, a_d, b]]_F$,
  the ring $\bigcap_{v \in \delta \llangle a_1, \ldots, a_{d}, b]]_F} \mathcal{O}_v$ is $\exists$-definable in $F$,
  uniformly in the parameters $a_1, \ldots, a_d, b$.
\end{enumerate}
These three conditions will be satisfied if $K$ is a non-real global field and for many naturally occurring large fields, see \Cref{EtoAglobalex} below.

\begin{lem}\label{L:O_vdefinablefunctionfields}
Let $F/K$ be a regular function field in one variable with $1 \leq d = \dim_2 K < \infty$.
Assume that \eqref{hyp1} and \eqref{hyp3} hold.
For every $v \in \mc{V}(F/K)$, $\mc{O}_v$ is $\exists$-definable and $\forall$-definable in $F$.
\end{lem}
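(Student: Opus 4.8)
The plan is to manufacture, out of hypothesis~\eqref{hyp1}, a single $(d+1)$-fold Pfister form over $F$ that ramifies at $v$, feed it into~\eqref{hyp3} to obtain an $\exists$-definable subring $R\subseteq\mc O_v$ which is "big enough at $v$", and then recover first $\mf m_v$ and then $\mc O_v$ itself from $R$ by elementary ring- and valuation-theoretic manipulations.

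First I would produce the ramified form. By \Cref{lem:transfer-surj-kernel} the transfer $I_q^d(Fv)\to I_q^d K$ is surjective, so \eqref{hyp1} gives $I_q^d(Fv)\ne 0$; as $I_q^d(Fv)$ is generated by classes of $d$-fold Pfister forms, there is an anisotropic $d$-fold Pfister form $\rho$ over $Fv$. Choosing a presentation $\rho\cong\llangle\ovl{c_1},\dotsc,\ovl{c_{d-1}},\ovl e]]_{Fv}$ with $c_1,\dotsc,c_{d-1},e\in F$ taken as $v$-units reducing to the given slots (and with $v(1+4e)=0$), and fixing a uniformiser $\pi$ of $v$, set $q_0=\llangle c_1,\dotsc,c_{d-1},\pi,e]]_F$. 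By \Cref{P:Pfisterresidue} (applied over the henselisation $F_v$) the form $q_0$ is anisotropic over $F_v$ with $\partial_v q_0=[\rho]\ne 0$, so $v\in\delta q_0$; moreover $\delta q_0$ is finite by \Cref{R:finiteSupportQF}, say $\delta q_0=\{v,w_1,\dotsc,w_k\}$. Hypothesis~\eqref{hyp3} then makes the ring $R:=\bigcap_{w\in\delta q_0}\mc O_w=\mc O_v\cap\mc O_{w_1}\cap\dotsb\cap\mc O_{w_k}$ $\exists$-definable in $F$.

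Next I would extract $\mf m_v$ and $\mc O_v$. By Weak Approximation pick $c\in F^\times$ with $v(c)=1$ and $w_i(c)=0$ for $1\le i\le k$, so $\delta q_0\cap\Odd(c)=\{v\}$. The second identity of \Cref{jacoblem}, applied with the finite set $\delta q_0$ and the ring $R$, then reads $\mf m_v=(c\cdot F\pow{2}\cap(1-F\pow{2}\cdot R^\times))\cdot R$, and the right-hand side is $\exists$-definable because $R$, $R^\times$ and $F\pow{2}$ are, and the elementwise operations occurring here preserve $\exists$-definability. Applying \Cref{P:EtoAgeneral} with $V=\{v\}$ already yields that $\mc O_v$ is $\forall$-definable in $F$. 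For the $\exists$-definition, note that $\mf p:=\mf m_v\cap R$ is a maximal ideal of the Dedekind domain $R$: it is the contraction of $\mf m_v$ along $R\hookrightarrow\mc O_v$, and $R/\mf p\hookrightarrow\mc O_v/\mf m_v=Fv$ is onto by Weak Approximation, so $R/\mf p$ is a field. Hence $R\setminus\mf p=\{y\in R\mid\exists z\in R:\ yz-1\in\mf p\}$ is $\exists$-definable (using that $\mf p=\mf m_v\cap R$ is), and since $\mc O_v$ is the localisation $R_{\mf p}$ we obtain $\mc O_v=\{x\in F\mid\exists y\in R\setminus\mf p:\ xy\in R\}$, an existential description; here $\supseteq$ is immediate and $\subseteq$ follows by choosing, for $x\in\mc O_v$, an element $y\in R$ with $v(y)=0$ clearing the poles of $x$ at $w_1,\dotsc,w_k$ (Weak Approximation again).

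The crux is the first step: the whole argument rests on having a Pfister form ramified at $v$, and hypothesis~\eqref{hyp1} is exactly what provides it, via the non-trivial $d$-fold Pfister form over the residue field $Fv$ and the lifting described by \Cref{P:Pfisterresidue}. Everything downstream is formal manipulation of the uniformly $\exists$-definable family supplied by~\eqref{hyp3} together with \Cref{jacoblem}; in particular the linkage hypothesis~\eqref{hyp2} plays no role here. The only points needing a little care are the various Weak Approximation choices (the lifts $c_i,e$, the auxiliary element $c$, and the denominators $y$), which are routine.
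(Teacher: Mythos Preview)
Your argument is correct and follows the same overall strategy as the paper: use hypothesis~\eqref{hyp1} together with surjectivity of the transfer (\Cref{lem:transfer-surj-kernel}) to obtain an anisotropic $d$-fold Pfister form over $Fv$, lift it via \Cref{P:Pfisterresidue} to a $(d+1)$-fold Pfister form $q$ with $v\in\delta q$, and then extract $\mc O_v$ from the $\exists$-definable semilocal ring $R=\bigcap_{w\in\delta q}\mc O_w$ supplied by~\eqref{hyp3}.

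Where you diverge from the paper is in the extraction step. The paper outsources this to \cite{Andromeda-1}: it first adjusts the presentation of $q$ so that the last slot $b$ satisfies $b,1+4b\in R^\times$ (using \cite[Theorem~6.11 and Proposition~3.9]{Andromeda-1}), and then invokes \cite[Proposition~6.6]{Andromeda-1} to read off the $\exists$-definability of $\mc O_v$ and $\mf m_v$ directly. You instead stay within the present paper: you use Weak Approximation to produce an auxiliary element $c$ with $\delta q\cap\Odd(c)=\{v\}$, apply \Cref{jacoblem} to isolate $\mf m_v$, and then recover $\mc O_v$ as the localisation $R_{\mf m_v\cap R}$. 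Your route is more self-contained and arguably more transparent, since \Cref{jacoblem} is already stated and used elsewhere in this section; the paper's route is terser but relies on machinery developed in the companion paper. Either way, the $\forall$-definability then drops out of \Cref{P:EtoAgeneral} identically.
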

\begin{proof}
By \Cref{lem:transfer-surj-kernel} there is a surjective homomorphism $I^d_q Fv \to I^d_q K \neq 0$, whereby in particular $I^d_q Fv \neq 0$, i.e.~there exists an anisotropic $d$-fold Pfister form over $Fv$.
By \Cref{P:Pfisterresidue} there exists a $(d+1)$-fold Pfister form $q = \llangle a_1, \dotsc, a_d, b]]_F$ such that $v \in \delta q$.
The ring $R = \bigcap_{v \in \delta q} \mathcal{O}_v$ is $\exists$-definable by assumption.
Now the $\exists$-definability of $\mc{O}_v$ and $\mf{m}_v$ follows from the $\exists$-definability of $R$ via \cite[Corollary~6.13]{Andromeda-1}
(where $\Delta_K q$ in the notation there is our $\delta q$ by \Cref{P:deltavsDelta}).
Finally, if $\mf{m}_v$ is $\exists$-definable, then $\mc{O}_v$ is $\forall$-definable by \Cref{P:EtoAgeneral}.
\end{proof}
\begin{stel}\label{EtoAglobal}
Let $F/K$ be a regular function field in one variable with $1 \leq d = \dim_2 K < \infty$.
Assume that (\ref{hyp1})-(\ref{hyp3}) hold.
Let $S \subseteq \mc{V}(F/K)$ be finite.
Then the ring of $S$-integers
\begin{displaymath}
\mathcal{O}_S = \bigcap_{v \in \mc{V}(F/K) \setminus S} \mc{O}_v = \lbrace x \in F \mid \forall v \in \mc{V}(F/K) \setminus S : v(x) \geq 0 \rbrace
\end{displaymath}
is $\forall$-definable in $F$.
\end{stel}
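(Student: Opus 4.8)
The plan is to read this off from the abstract criterion \Cref{T:UniversalDefinitionsGeneralTechnique}, applied with the standing data $\mc{V} = \mc{V}(F/K)$ and the given $d$, $S$: what remains to be produced is an $\exists$-definable Pfister stratum $X \subseteq F^{d+1}$ for $S$, and this is exactly where the reciprocity exact sequence \Cref{T:Reciprocity-Main} enters, through \Cref{P:PivotPropertyCondition-ff}. First I would check that $\mc{V}(F/K)$ meets the standing hypotheses of \Cref{sect:UniversalDefinitionGeneralTechnique}: it has the finite support property (the example after \Cref{D:finiteSupport}); for $v \in \mc{V}(F/K)$ the residue field $Fv$ is a finite extension of $K$, so $\dim_2(Fv) = d$ and hence $I_q^{d+1}(Fv) = 0$ by the discussion in \Cref{sec:reciprocity}; and if some $v \in \mc{V}(F/K)$ is dyadic then $\charac(K) = 2$ and $[Fv : (Fv)\pow{2}] = [K : K\pow{2}] = 2^{d-1} < 2^{d}$, while if $\charac(K) \neq 2$ there are no dyadic $v \in \mc{V}(F/K)$ at all.

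Next I would dispose of a parity issue, since \Cref{P:PivotPropertyCondition-ff} only yields a Pfister stratum when the index set has \emph{odd} cardinality. If $\lvert S\rvert$ is even, I would pick any $v_0 \in \mc{V}(F/K) \setminus S$ — possible, as a function field in one variable has infinitely many places — so that $\lvert S \cup \{v_0\}\rvert$ is odd and $\mc{O}_S = \mc{O}_{S \cup \{v_0\}} \cap \mc{O}_{v_0}$. Then $\mc{O}_{v_0}$ is $\forall$-definable by \Cref{L:O_vdefinablefunctionfields} (using \eqref{hyp1} and \eqref{hyp3}), and an intersection of two $\forall$-definable sets is $\forall$-definable, so it suffices to treat the case $\lvert S\rvert \in 2\nat + 1$.

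Assuming $\lvert S\rvert$ odd, I would apply \Cref{P:PivotPropertyCondition-ff} — whose hypotheses are precisely $1 \leq d < \infty$, \eqref{hyp1}, \eqref{hyp2}, and regularity of $F/K$ — to obtain, for each $v \in S$, an anisotropic $d$-fold Pfister form $q_v$ over $Fv$ such that $\Phi_{(q_v)_{v \in S}}$ is a Pfister stratum for $S$ with respect to $\mc{V}(F/K)$. By \Cref{L:O_vdefinablefunctionfields} each $\mc{O}_v$ ($v \in S$) is $\exists$-definable in $F$, so \Cref{Phidefinable} shows that $X := \Phi_{(q_v)_{v \in S}}$ is $\exists$-definable. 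The remaining hypothesis of \Cref{T:UniversalDefinitionsGeneralTechnique} — uniform $\exists$-definability of the rings $\bigcap_{v \in \delta q}\mc{O}_v$ over $(d+1)$-fold Pfister forms $q = \llangle a_1, \dotsc, a_d, b]]$ — is exactly \eqref{hyp3}. Hence \Cref{T:UniversalDefinitionsGeneralTechnique} applies and gives that $\mc{O}_S$ is $\forall$-definable in $F$, as desired.

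The substantial content has all been front-loaded into earlier results: the reciprocity exact sequence \Cref{T:Reciprocity-Main}, the surjectivity of the transfer in \Cref{lem:transfer-surj-kernel}, and the extraction of Pfister strata in \Cref{L:PivotPropertyCondition}. Given these, the present argument is largely assembly; the only genuinely new ingredient is the parity reduction, and the one place demanding care is verifying that the standing hypotheses of \Cref{sect:UniversalDefinitionGeneralTechnique} — in particular the bound on $[Fv : (Fv)\pow{2}]$ for dyadic $v$ — are met in this setting.
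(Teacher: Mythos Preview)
Your proposal is correct and follows essentially the same approach as the paper: reduce to odd $\lvert S\rvert$ via the intersection $\mc{O}_S = \mc{O}_{S\cup\{v_0\}}\cap\mc{O}_{v_0}$ and \Cref{L:O_vdefinablefunctionfields}, then obtain an $\exists$-definable Pfister stratum from \Cref{P:PivotPropertyCondition-ff} combined with \Cref{Phidefinable} and \Cref{L:O_vdefinablefunctionfields}, and finish with \Cref{T:UniversalDefinitionsGeneralTechnique}. The only difference is cosmetic (you do the parity reduction first rather than last, and you spell out the verification of the standing hypotheses on $\mc{V}(F/K)$ that the paper leaves implicit).
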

\begin{proof}
Let us first consider the case where $\lvert S \rvert$ is odd.
We want to invoke \Cref{T:UniversalDefinitionsGeneralTechnique} and need to verify that its hypotheses are satisfied.

By hypothesis \eqref{hyp2} $(d+1)$-fold Pfister forms over $F$ are linked, and by \eqref{hyp1} we have $I^d_q K/ I^{d+1}_q K \cong I^d_q K \neq 0$.
We conclude by \Cref{P:PivotPropertyCondition-ff} that we can find a set $\Phi \subseteq F^{d+1}$ which is a Pfister stratum for $S$ and which furthermore is existentially definable in view of \Cref{Phidefinable} and \Cref{L:O_vdefinablefunctionfields}.
We have verified the hypotheses of \Cref{T:UniversalDefinitionsGeneralTechnique} and conclude that $\mc{O}_S$ is universally definable in $F$.

If $\lvert S \rvert$ is even, then pick $v \in \mc{V}(F/K) \setminus S$ arbitrary and set $S' = S \cup \lbrace v \rbrace$.
By the case covered in the above paragraph, $\mc{O}_{S'}$ is $\forall$-definable in $F$. But then also $\mc{O}_S = \mc{O}_{S'} \cap \mc{O}_v$ is $\forall$-definable by virtue of \Cref{L:O_vdefinablefunctionfields}.
\end{proof}
\begin{opm}\label{R:global-fields-universal}
When $F$ is a non-real global field, then it satisfies \eqref{hyp3} for $d=1$ by the work of Poonen and Koenigsmann and subsequent generalisations, see e.g.~\cite[Lemma 5.2]{DaansGlobal} where this is explained in the language of quaternion algebras based on the work of \cite{Dit17}; the translation between quaternion algebras and $2$-fold Pfister forms is standard (see e.g.~\cite[Section 12]{ElmanKarpenkoMerkurjev}).

One can thus use the existence of a Pfister stratum as discussed in \Cref{R:global-fields-Pfister-stratum} together with the above fact to derive from \Cref{T:UniversalDefinitionsGeneralTechnique} the main result of the sequence of papers \cite{Koe16,Par13,Eis18,DaansGlobal},
namely, that rings of $S$-integers in $F$ are $\forall$-definable in $F$.
In this way, the techniques developed abstractly in \Cref{sect:UniversalDefinitionGeneralTechnique} can be seen as capturing the essence of this construction for global fields.
\end{opm}

To give examples where hypotheses \eqref{hyp1}-\eqref{hyp3} are satisfied, we introduce a class of fields.
For a natural number $d$, we denote by $\mc{K}_d$ the class of all fields $K$ with the following properties:
\begin{enumerate}[(i)]
\item\label{it:K_d-1} $\dim_2(K) = d$,
\item\label{it:K_d-2} there exists some finite extension $L/K$ such that $I_q^d L \neq 0$,
\item\label{it:K_d-3} for every function field in one variable $F/K$, $(d+1)$-fold Pfister forms over $F$ are linked.
\end{enumerate}
The property for a field $K$ with $\dim_2(K) = d$ that, for every function field in one variable $F/K$, $(d+1)$-fold Pfister forms over $F$ are linked, is studied in \cite{DaansLinkage}.
When $\charac(K) = 2$, then \eqref{it:K_d-3} follows from \eqref{it:K_d-1}, see \cite[Proposition 3.5]{DaansLinkage} (essentially by \cite{ChapmanMcKinnieSymbolLength}).
On the other hand, if $\charac(K) \neq 2$, then \eqref{it:K_d-2} follows from \eqref{it:K_d-1} by definition of $\dim_2(K) = \cd_2(K)$ and \Cref{P:cdCharacterisation}.

\begin{vbn}\label{E:linkage}
Let $K$ be a field, $d \in \nat$.
\begin{enumerate}
\item\label{it:algclosed} If $K$ is algebraically closed and $\charac(K) \neq 2$, then $K \in \mc{K}_0$.
\item\label{it:finite} If $K$ is finite, then $K \in \mc{K}_1$.
\item\label{it:global} If $K$ is a non-real global or non-archimedean local field, then $K \in \mc{K}_2$.
\item\label{it:PAC} If $K$ is pseudo-algebraically closed, has no inseparable quadratic extension, but has a separable extension of even degree, then $K \in \mc{K}_1$.
\item\label{it:Cd} If $\charac(K) \neq 2$, $K$ is a $C_d$-field (see e.g.~\cite[Section 21.2]{Fri08}) and $\cd_2(K) \geq d$, then $K \in \mc{K}_d$.
\end{enumerate}
\end{vbn}
\begin{proof}
\eqref{it:Cd} When $K$ is a $C_d$-field, then also every finite field extension $L/K$ is a $C_d$-field.
Furthermore, since a $(d+1)$-fold Pfister form over $L$ has $2^{d+1} > 2^d$ variables, it must always be isotropic. It follows that $I_q^{d+1} L = 0$.
In view of \Cref{P:cdCharacterisation}, $\cd_2(K) \leq d$; together with the hypothesis we obtain $\cd_2(K) = d$.
 It now suffices to show the linkage property. For this, see \cite[Examples 3.7(4)]{DaansLinkage}.
 
\eqref{it:algclosed} This follows from \eqref{it:Cd}, since algebraically closed fields are $C_0$-fields.

\eqref{it:finite} Since $K$ has a separable quadratic extension, we have $I_q^1 K \neq 0$.
The result now follows from \eqref{it:Cd} if $\charac(K) \neq 2$, since finite fields are $C_1$-fields.
If $\charac(K) = 2$, then $[K : K\pow{2}] = 1$ since finite fields are perfect, hence the result also follows.

\eqref{it:global} We have $I_q^2 K \neq 0$, as well as $I_q^3 L = 0$ for every finite extension $L/K$, by the classical theory of quadratic forms over global and local fields, see e.g.~\cite[Chapter VI]{OMe00}.
Furthermore, if $\charac(K) = 2$, then $K$ is a function field in one variable over a finite field,
whence $[K : K\pow{2}] = 2$, and $K \in \mc{K}_2$ follows.
If $\charac(K) > 2$, $K$ is a $C_2$-field, hence the fact that $K \in \mc{K}_2$ now follows from \eqref{it:Cd}.
If $\charac(K) = 0$, then $\cd_2(K) = 2$ by \Cref{P:cdCharacterisation},
and the desired linkage statement can be found in \cite[Examples 3.7(1),(2)]{DaansLinkage} (essentially proven in \cite{Suresh_ThirdGalCohom}).

\eqref{it:PAC} If $K$ is pseudo-algebraically closed, then $\cd_2(K) \leq 1$ \cite[Corollary 11.6.8]{Fri08}.
Since $K$ has a separable extension of even degree,
some finite extension $L/K$ has a separable quadratic extension, and so $I_q^1 L \neq 0$.
If $\charac(K) = 2$, then by the assumption that $K$ has no inseparable quadratic extension, $[K : K\pow{2}] = 1$.
If $\charac(K) \neq 2$, then the linkage property follows from \cite[Examples 3.7(3)]{DaansLinkage} (essentially proven in \cite{BGStrongLinkage}).
\end{proof}
\begin{prop}\label{P:linkage-stability}
Let $d,e \in \nat$, $K \in \mc{K}_d$.
\begin{enumerate}
\item\label{it:K_d-fin} If $L/K$ is finite, then $L \in \mc{K}_d$.
\item\label{it:K_d-fg} If $\charac(K) = 2$ and $L/K$ is a finitely generated extension of transcendence degree $e$, then $L \in \mc{K}_{d+e}$.
\item\label{it:K_d-CDV} If $(L, v)$ is a complete $\zz$-valued field with $Lv = K$, then $L \in \mc{K}_{d+1}$.
\item\label{it:K_d-HV} If $(L, v)$ is a henselian valued field, $v$ is non-dyadic, $Lv = K$, and $(vL : 2vL) = 2^e$, then $L \in \mc{K}_{d+e}$.
\end{enumerate}
\end{prop}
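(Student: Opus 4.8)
The plan is to verify, for the field $L$ in each of the four cases, the three defining conditions \eqref{it:K_d-1}--\eqref{it:K_d-3} of $\mathcal{K}_{d'}$ for the appropriate index $d'$. Condition \eqref{it:K_d-1} concerns only $\dim_2$: in \eqref{it:K_d-fin} and \eqref{it:K_d-fg} it is exactly the behaviour of $\dim_2$ under finitely generated extensions recorded in \Cref{sec:reciprocity} (so $\dim_2 L = \dim_2 K$ in \eqref{it:K_d-fin} and $\dim_2 L = d+e$ in \eqref{it:K_d-fg}), and in \eqref{it:K_d-CDV}, \eqref{it:K_d-HV} it follows either, in equal characteristic $2$, from a direct computation of $[L:L\pow 2]$ (reducing to $L = K((t))$ via Cohen's structure theorem exactly as in \Cref{sec:reciprocity}), or otherwise from the standard computation of the $2$-cohomological dimension of complete, respectively henselian non-dyadic, discretely valued fields (see \autocite{Kato_GaloisCohomologyCompleteValuedFields} and \autocite{DaansLinkage}), giving $\dim_2 L = d+1$ in \eqref{it:K_d-CDV} and, using $(vL:2vL) = 2^e$, $\dim_2 L = d+e$ in \eqref{it:K_d-HV}. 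Condition \eqref{it:K_d-3} is free whenever $\charac L = 2$ --- which is automatic in \eqref{it:K_d-fg}, and holds in \eqref{it:K_d-fin}, \eqref{it:K_d-CDV} when $\charac K = 2$ --- by \autocite[Proposition 3.5]{DaansLinkage}; and when $\charac L \neq 2$ it follows for \eqref{it:K_d-fin} because a function field in one variable over $L$ is one over $K$, and for \eqref{it:K_d-CDV}, \eqref{it:K_d-HV} from the linkage results in \autocite{DaansLinkage} for function fields over complete, respectively henselian non-dyadic, valued fields (as used in the proof of \Cref{E:linkage}).

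The substantive condition is \eqref{it:K_d-2}. If $\charac L \neq 2$ --- which in \eqref{it:K_d-HV} is forced by non-dyadicity of $v$ --- then by \eqref{it:K_d-1} and \Cref{P:cdCharacterisation} we have $\dim_2 L = \cd_2 L = d'$, so (as $\cd_2 L \geq d'$) some finite separable $L'/L$ satisfies $I_q^{d'} L' \neq 0$; this disposes of \eqref{it:K_d-HV} and of \eqref{it:K_d-fin}, \eqref{it:K_d-CDV} when $\charac K \neq 2$. In the remaining characteristic-$2$ situations the key input I would use is the auxiliary fact that \emph{a field $F$ of characteristic $2$ with $\dim_2 F = d$ admitting a separable quadratic extension $F(\wp^{-1}(c))$ (where $\wp\colon x\mapsto x^2+x$) satisfies $I_q^d F \neq 0$}: the point is to produce an anisotropic $d$-fold quadratic Pfister form $\llangle a_1,\ldots,a_{d-1},c]]$ with $a_1,\ldots,a_{d-1}$ a $2$-basis of $F$ and $c \notin \wp(F)$ chosen so that the tensor product with the (anisotropic) bilinear Pfister form $\llangle a_1,\ldots,a_{d-1}\rrangle^b$ remains anisotropic --- which does not hold for every such $c$ --- and I expect establishing this auxiliary fact to be the technical heart of the argument; it is a statement about quadratic forms and symbols in characteristic $2$ which may already be available (e.g.\ from Kato's work, or building on \autocite{ElmanKarpenkoMerkurjev,Andromeda-1,Aravire-Jacob}).

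Granting this, condition \eqref{it:K_d-2} in the remaining cases follows by exhibiting a finite extension of $L$ with a separable quadratic extension and $2$-dimension $d'$. In \eqref{it:K_d-fg}, $L$ is itself a one-variable function field over a field of characteristic $2$, hence (via any $\zz$-valuation on $L$ and a Laurent-series argument) $\wp(L) \neq L$, while $\dim_2 L = d+e$; in \eqref{it:K_d-CDV} with $\charac K = 2$, $L = K((t))$ has the separable quadratic extension $L(\wp^{-1}(t^{-1}))$ and $\dim_2 L = d+1$; so in both cases the auxiliary fact applies directly. For \eqref{it:K_d-fin} with $\charac K = 2$: the hypothesis $K \in \mathcal{K}_d$ gives a finite $M/K$ with $I_q^d M \neq 0$, whence $\wp(M) \neq M$ and therefore the absolute Galois group $G_K$ has non-trivial pro-$2$ Sylow subgroup; the open subgroup $G_L \leq G_K$ then also does, and a routine argument (passing to an open subgroup of even index, taking its normal core $N$, and lifting a subgroup of order $2$ in the finite quotient $G_L/N$) produces an open $V \leq G_L$ surjecting onto $\zz/2\zz$ --- that is, a finite extension $L'/L$ admitting a separable quadratic extension, with $\dim_2 L' = d$ --- so that the auxiliary fact yields $I_q^d L' \neq 0$, as required.
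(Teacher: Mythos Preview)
Your treatment of conditions \eqref{it:K_d-1} and \eqref{it:K_d-3} matches the paper's, and your handling of condition \eqref{it:K_d-2} when $\charac L \neq 2$ via $\cd_2$ and \Cref{P:cdCharacterisation} is correct (the paper itself notes this shortcut just before \Cref{E:linkage}).

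The genuine gap is your treatment of condition \eqref{it:K_d-2} in characteristic $2$. You stake everything on the ``auxiliary fact'' that a characteristic-$2$ field $F$ with $\dim_2 F = d$ admitting a separable quadratic extension must have $I_q^d F \neq 0$, but you do not prove it --- you explicitly say you \emph{expect} it to be the technical heart and hope it is available in the literature. Even granting the pro-$2$ Sylow reduction you sketch for part \eqref{it:K_d-fin}, the argument is incomplete without this fact, and your sketch of how to prove it (choosing $c$ so that $\llangle a_1,\dotsc,a_{d-1},c]]$ is anisotropic) is not an argument but a restatement of what is needed.

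The paper bypasses this entirely with tools already in hand. For \eqref{it:K_d-fin}: given $K'/K$ finite with $I_q^d K' \neq 0$, any finite $L'/L$ containing $K'$ has $I_q^d L' \neq 0$, because the transfer $I_q^d L' \to I_q^d K'$ is \emph{surjective} by \Cref{lem:transfer-surj-kernel}. For \eqref{it:K_d-fg}, \eqref{it:K_d-CDV}, \eqref{it:K_d-HV}: after a finite base change one may assume $I_q^d(Lv) \neq 0$ (using \eqref{it:K_d-fin}), and then a nonzero class in $I_q^d(Lv)$ lifts to $I_q^{d'} L$ because the residue homomorphism is surjective (\Cref{C:residueHomomorphism}, or in the general henselian case \autocite[Theorem~4.9]{DaansLinkage}). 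Each of these is a one-line application of machinery already developed in \Cref{sec:QF-preliminaries}--\Cref{sec:reciprocity}; no Galois-theoretic detour or unproved lemma about Artin--Schreier classes is needed.
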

\begin{proof}
\eqref{it:K_d-fin}:
We previously observed that $\dim_2 L = \dim_2 K = d$.
If $K'/K$ is a finite extension with $I_q^d K' \neq 0$,
any finite extension $L'/L$ containing $K'$ will satisfy $I_q^d L' \neq 0$ by surjectivity of the transfer map $I_q^d L' \to I_q^d K'$ (\Cref{lem:transfer-surj-kernel}).
Lastly, every function field in one variable $F/L$ is in particular a function field in one variable over $K$,
and so $(d+1)$-fold Pfister forms over $F$ are linked.

\eqref{it:K_d-fg}:
Because of \eqref{it:K_d-fin}, it suffices to consider the case $L = K(T)$.
We previously observed that $\dim_2 L = \dim_2 K + 1$.
Let $K'/K$ be a finite extension with $I_q^d K' \neq 0$.
The finite extension $L' = K'(T)$ of $L$ satisfies $I_q^{d+1} L' \neq 0$:
This follows for example from the surjectivity of the residue map $\partial_v\colon I_q^{d+1} L'/I_q^{d+2} L' \to I_q^d K'/I_q^{d+1} K'$
(\Cref{C:residueHomomorphism}) for the $T$-adic valuation $v$ on $L'$ with residue field $K'$,
or from \Cref{T:Reciprocity-Main}.
The linkage statement for function fields over $L$ is automatic in characteristic $2$.

\eqref{it:K_d-CDV} and \eqref{it:K_d-HV}:
Let $(L, v)$ be as in either \eqref{it:K_d-CDV} or \eqref{it:K_d-HV}; in the first case, set $e = 1$.
Let $F/L$ be a function field in one variable.
If $\charac(L) \neq 2$, it follows from \cite[Theorem 6.3 and Theorem 6.6]{DaansLinkage} that $(d+e+1)$-fold Pfister forms over $F$ are linked and $(d+e+2)$-fold Pfister forms over $F$ are isotropic;
since the same holds for finite extensions of $F$,
we in particular have $\cd_2(F) \leq d+e+1$ by \Cref{P:cdCharacterisation},
so $\cd_2(L) = \cd_2(F) - 1 \leq d+e$.
If $\charac(L) = 2$, then we are in case \eqref{it:K_d-CDV}, and it follows by \cite[Proposition 6.8]{DaansLinkage} that $\dim_2(L) = d+1$.

It remains to show that $I_q^{d+e} L' \neq 0$ for some finite field extension $L'/L$.
After replacing $(L, v)$ by a finite extension if necessary, we may assume that $I_q^d(Lv) \neq 0$.
Now use that $I_q^{d+e} L$ is isomorphic to $I_q^d(Lv)$ via the residue homomorphism to obtain that $I_q^{d+e} L \neq 0$,
see \cite[Theorem 4.9]{DaansLinkage} (a generalisation of \Cref{C:residueHomomorphism}).
\end{proof}

\begin{lem}\label{L:hyp1andhyp2}
Let $d \in \nat^+$, $K \in \mc{K}_d$, and $F/K$ a function field in one variable, not necessarily regular.
Then there exists some finite field extension $L/K$ contained in the algebraic closure of $F$ such that $FL/L$ is a regular function field satisfying \eqref{hyp1} and \eqref{hyp2}.
\end{lem}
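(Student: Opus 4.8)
The plan is to produce the required extension in two stages: first a finite extension that makes the function field regular, and then one further finite enlargement that secures \eqref{hyp1}; condition \eqref{hyp2} will come essentially for free. For the regularity reduction I would proceed as follows. Since $F/K$ is finitely generated, I would invoke the classical fact (see e.g.\ \cite{Fri08}) that there is a finite purely inseparable extension $K_1/K$ such that $FK_1/K_1$ is a separable field extension; such a $K_1$ can be taken inside $\overline{F}$, as it lies in $K^{1/p^\infty}\subseteq\overline F$. Let $K_2$ be the relative algebraic closure of $K_1$ in $FK_1$. Because $FK_1/K_1$ is separable and finitely generated, every element of $FK_1$ algebraic over $K_1$ is separable over $K_1$, so $K_2/K_1$ is a finite \emph{separable} extension; hence $K_2/K$ is finite and $K_2\subseteq FK_1\subseteq\overline F$. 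Since separability of $FK_1$ over its base field is preserved under enlarging the base by a separable algebraic extension contained in $FK_1$, the extension $FK_2=FK_1$ is separable over $K_2$, and $K_2$ is relatively algebraically closed in it by construction; therefore $FK_2/K_2$ is a regular function field in one variable.

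For the second stage I would use that, by definition of $\mc{K}_d$, there is a finite extension $M/K$ with $I_q^d M\neq 0$; fixing an embedding of $M$ into $\overline F$ over $K$, I set $L=K_2 M\subseteq\overline F$, a finite extension of $K$. Since $FK_2/K_2$ is a regular extension and $L/K_2$ is a (finite) algebraic field extension, $FK_2\otimes_{K_2}L$ is an integral domain which is integral over the field $FK_2$, hence itself a field, canonically isomorphic over $L$ to the compositum $FL$; consequently $FL/L$ is again a regular function field in one variable (regularity being stable under base field extension). As a finite extension does not change the $2$-dimension, $\dim_2 L=\dim_2 M=\dim_2 K=d$. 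Now \eqref{hyp1} holds: the transfer $I_q^d L\to I_q^d M$ is surjective by \Cref{lem:transfer-surj-kernel} (applied over the base field $M$, using $1\leq\dim_2 M=d<\infty$), so $I_q^d L\neq 0$. And \eqref{hyp2} holds: $FL$ is a function field in one variable over $K$, and $K\in\mc{K}_d$, so $(d+1)$-fold Pfister forms over $FL$ are linked. Thus $L$ has all the required properties.

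The step I expect to require the most care is the regularity reduction in the first stage — specifically the input that inseparability of $F/K$ can be removed by a finite purely inseparable extension of the ground field, together with the verification that passing to the constant-field closure $K_2$ does not reintroduce inseparability (this uses the downward stability of separability along separable algebraic extensions of the base). This is the one ingredient not directly supplied by the earlier sections; everything else is a routine assembly of \Cref{lem:transfer-surj-kernel}, the stability of regular extensions under base field extension, the invariance of $\dim_2$ under finite extensions, and the definition of $\mc{K}_d$.
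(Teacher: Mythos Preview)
Your proof is correct and follows essentially the same approach as the paper's: both combine a standard regularity reduction with the transfer surjectivity of \Cref{lem:transfer-surj-kernel} and the linkage hypothesis built into $\mc{K}_d$. The only differences are cosmetic---the paper first passes to an extension with $I_q^d\neq 0$ and then regularises (citing \cite{VillaSalvador} rather than spelling out the inseparable/algebraic-closure steps), and it deduces linkage from $L\in\mc{K}_d$ via \Cref{P:linkage-stability} rather than, as you do, directly from $K\in\mc{K}_d$ and the fact that $FL/K$ is still a function field in one variable.
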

Here, the assumption that $L$ is contained in the algebraic closure of $F$ is there to have an unambiguously defined compositum $FL$.
\begin{proof}
By the assumption that $K \in \mc{K}_d$, there exists a finite field extension $L_0/K$ with $I^d_q L_0 \neq 0$.
We may take a further finite extension $L/L_0$ to obtain that the function field $FL/L$ is regular, by first taking the relative algebraic closure of $L_0$ in $FL_0$, followed by a finite purely inseparable extension, see e.g. \cite[Theorem 8.6.10]{VillaSalvador}.
We still have $I^d_q L \neq 0$ by the surjectivity of the transfer map $I^d_q L \to I^d_q L_0$ (see \Cref{lem:transfer-surj-kernel}), and $L \in \mc{K}_d$.
We conclude that $FL/L$ is a regular function field in one variable satisfying \eqref{hyp1}.
Furthermore, $FL$ satisfies \eqref{hyp2} by the definition of $L \in \mc{K}_d$.
\end{proof}

We can now state our main theorem on universal definability of rings of $S$-integers.
To state our results in the highest generality first, we will, as in \cite{Andromeda-1}, rely on the notion of large fields, as introduced by Pop.
A field $K$ is called \emph{large} (some authors prefer the term \emph{ample}) if every smooth curve over $K$ which has a $K$-rational point has infinitely many $K$-rational points.
We refer to \cite{Pop_LittleSurvey} for an overview of examples and basic properties.
We mention in particular that all complete and all non-trivially henselian valued fields are large,
as well as all pseudo-algebraically closed fields.
\begin{stel}\label{EtoAglobalex}
Assume that one of the following holds:
  \begin{enumerate}
  \item $K$ is a global field;
  \item some finite extension $K'/K$ is large and satisfies $K' \in \mc{K}_d$ for some $d \in \nat^+$.
  \end{enumerate}
  Let $F/K$ be a function field in one variable, $S \subseteq \mc{V}(F/K)$ be finite. The subring $\mathcal{O}_S$
is $\forall$-definable in $F$.
\end{stel}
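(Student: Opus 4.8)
The plan is to deduce \Cref{EtoAglobalex} from \Cref{EtoAglobal} by passing to a suitable finite constant extension of $F$, applying the general theorem there, and then descending. First I would fix a finite extension $K'/K$ with $K' \in \mc{K}_d$ for some $d \in \nat^+$, viewed as a subfield of $\ovl{F}$. In case (2) this is the given extension. In case (1), if $\charac K > 0$ then $K$ is a non-real global field, so $K \in \mc{K}_2$ by \Cref{E:linkage}\eqref{it:global} and I take $K' = K$; if $K$ is a number field, then $K(\sqrt{-1})$ is a non-real number field, hence lies in $\mc{K}_2$ again by \Cref{E:linkage}\eqref{it:global}, and I take $K' = K(\sqrt{-1})$. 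The passage to $K(\sqrt{-1})$ is forced here: a number field with a real place has $\dim_2 = \infty$, so \Cref{EtoAglobal} cannot be applied over it directly.

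Next I would apply \Cref{L:hyp1andhyp2} to $K' \in \mc{K}_d$ and the (possibly non-regular) function field $FK'/K'$, obtaining a finite extension $L/K'$ inside $\ovl{F}$ such that $FL/L$ is a regular function field in one variable satisfying \eqref{hyp1} and \eqref{hyp2}. By \Cref{P:linkage-stability}\eqref{it:K_d-fin} we have $L \in \mc{K}_d$, so $1 \leq d = \dim_2 L < \infty$. It then remains to check \eqref{hyp3} for $FL/L$, i.e.\ the uniform existential definability of the rings $\bigcap_{v \in \delta q}\mc{O}_v$. In case (2), $L$ is a finite, hence algebraic, extension of the large field $K'$, so $L$ is itself large (\cite{Pop_LittleSurvey}), and it lies in $\mc{K}_d$; in case (1), $L$ is a non-real global field. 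In either case \eqref{hyp3} holds for $FL/L$ by the diophantine-definability results for valuation rings in function fields established in \cite{Andromeda-1}. With all the hypotheses verified, \Cref{EtoAglobal} applies to $FL/L$: writing $S'$ for the finite set of places of $FL/L$ lying over a place in $S$, the ring $\mc{O}_{S'} = \bigcap_{w \in \mc{V}(FL/L)\setminus S'}\mc{O}_w$ is universally definable in $FL$.

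Finally I would descend from $FL$ back to $F$. Since every place of $FL/L$ restricts to a place of $F/K$ and, conversely, every place of $F/K$ extends to $FL/L$, one checks directly that $\mc{O}_S = \mc{O}_{S'}\cap F$. As $FL/F$ is a finite field extension, I would fix an $F$-basis $\omega_1 = 1, \omega_2, \dotsc, \omega_n$ of $FL$ (whose structure constants lie in $F$) and translate a universal $\Lar(FL)$-formula defining $\mc{O}_{S'}$ into a universal $\Lar(F)$-formula in $n$ variables, by expanding every variable, every parameter and every atomic equation in this basis; substituting $0$ for all coordinates except the first then yields a universal $\Lar(F)$-formula in a single variable defining $\mc{O}_{S'}\cap F = \mc{O}_S$. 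Hence $\mc{O}_S$ is $\forall$-definable in $F$.

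I expect the real content to sit in the verification of \eqref{hyp3} for $FL/L$: the reduction to the regular case via \Cref{L:hyp1andhyp2}, the stability of $\mc{K}_d$ and of largeness under finite extensions, and the linear-algebraic descent of universal definability through a finite extension are all routine or already at hand, whereas \eqref{hyp3} is precisely the point at which the concrete arithmetic of the base field enters and where one must invoke the nontrivial existential definitions of valuation rings in function fields over global and large base fields.
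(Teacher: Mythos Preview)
Your proposal is correct and follows essentially the same route as the paper: pass to $K' = K(\sqrt{-1})$ in the global case (the paper does this uniformly rather than splitting on characteristic), apply \Cref{L:hyp1andhyp2} to get a regular $FL/L$ with \eqref{hyp1}--\eqref{hyp2}, invoke the results of \cite{Andromeda-1} for \eqref{hyp3}, apply \Cref{EtoAglobal} upstairs, and descend via $\mc{O}_S = \mc{O}_{S'}\cap F$ using the quantifier-free interpretation of $FL$ in $F$ (the paper cites \cite[Lemma 9.11]{Andromeda-1} for this last step, which is exactly the basis argument you spell out). Your identification of the verification of \eqref{hyp3} as the substantive step matches the paper's emphasis.
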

\begin{proof}
If $K$ is a global field, let $K' = K(\sqrt{-1})$ and $d = 2$,
so $K' \in \mc{K}_d$ in both cases by \Cref{E:linkage}\eqref{it:global}.
By \Cref{L:hyp1andhyp2} we can find a further finite extension $L/K'$ such that
$FL/L$ is regular and satisfies \eqref{hyp1} and \eqref{hyp2}.
Furthermore, $L$ is either a non-real global field or a large field,
so \eqref{hyp3} is satisfied for $FL/L$ by \cite[Theorem 10.13]{Andromeda-1} in the global case
and \cite[Theorem 9.17]{Andromeda-1} in the large case
(observing as in the proof of \Cref{L:O_vdefinablefunctionfields} that for a Pfister form $q = \llangle a_1, \dotsc, a_d, b]]_F$,
the ring $\bigcap_{v \in \delta q} \mathcal{O}_v$ is notated $\mathcal{O}(\Delta_K q)$ in \cite{Andromeda-1},
using that $\Delta_K$ there agrees with our $\delta q$ by \Cref{P:deltavsDelta}).

By applying \Cref{EtoAglobal} to the function field $FL/L$ and to the finite set 
$$S' = \lbrace w \in \mc{V}(FL/L) \mid \exists v \in S : \mc{O}_v = \mc{O}_w \cap F \rbrace,$$
we find that $\mc{O}_{S'}$ is universally definable in $FL$.
Furthermore, we have $\mc{O}_{S'} \cap F = \mathcal{O}_S$.
We thus obtain the required universal definability of $\mathcal{O}_S$ in $F$ by a standard interpretation argument,
see e.g.~\cite[Lemma~6.3]{MillerShlapentokh_v2}.
\end{proof}

\begin{gev}\label{C:EtoAglobalex}
Assume that one of the following holds:
\begin{itemize}
\item $K$ is pseudo-algebraically closed, has no inseparable quadratic extension, but has a separable extension of even degree,
\item $K$ carries a complete $\zz$-valuation with residue field in $\mc{K}_d$ for some $d \in \nat$.
\item $K$ carries a henselian non-dyadic valuation $v$ with residue field in $\mc{K}_d$ for some $d \in \nat$ and such that $1 < (vK : 2vK) < \infty$.
\end{itemize}
Let $F/K$ be a function field in one variable, $S \subseteq \mc{V}(F/K)$ be finite. The subring
$\mathcal{O}_S$
is $\forall$-definable in $F$.
\end{gev}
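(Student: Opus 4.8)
The plan is to deduce the corollary directly from \Cref{EtoAglobalex}: in each of the three listed cases I will check that $K$ itself is a large field belonging to some class $\mc{K}_d$ with $d \in \nat^+$, so that the second alternative in the hypothesis of \Cref{EtoAglobalex} is met (taking $K' = K$), and the conclusion then follows with no further work. Thus the whole proof reduces to two verifications per case: largeness of $K$, and membership of $K$ in a class $\mc{K}_d$ with $d \geq 1$.

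For the first case, where $K$ is pseudo-algebraically closed with no inseparable quadratic extension but with a separable extension of even degree, I would cite \Cref{E:linkage}\eqref{it:PAC}, which states precisely that such a $K$ lies in $\mc{K}_1$; and $K$ is large since every pseudo-algebraically closed field is large, as recalled just before \Cref{EtoAglobalex}. For the second case, where $(K,v)$ is a complete $\zz$-valued field with $Kv \in \mc{K}_d$, I would apply \Cref{P:linkage-stability}\eqref{it:K_d-CDV} to obtain $K \in \mc{K}_{d+1}$, note that $d+1 \geq 1$, and use that complete valued fields are large. For the third case, where $v$ is a henselian non-dyadic valuation on $K$ with $Kv \in \mc{K}_d$ and $1 < (vK : 2vK) < \infty$, I would first observe that $vK/2vK$ is a finite elementary abelian $2$-group, so that $(vK : 2vK) = 2^e$ for some $e \in \nat$, and in fact $e \geq 1$ because the index exceeds $1$; then \Cref{P:linkage-stability}\eqref{it:K_d-HV} yields $K \in \mc{K}_{d+e}$ with $d+e \geq 1$, and $K$ is large as a non-trivially henselian valued field (note that $v$ is non-trivial, its value group surjecting onto the non-trivial group $vK/2vK$).

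I do not expect any real obstacle here: the corollary is a straightforward specialisation of \Cref{EtoAglobalex}, once the stability results of \Cref{P:linkage-stability} and the list in \Cref{E:linkage} are in hand. The only minor points requiring care are ensuring that the index $d$ of the class $\mc{K}_d$ comes out strictly positive rather than merely non-negative -- which holds automatically in each case as indicated above -- and recalling from the paragraph preceding \Cref{EtoAglobalex} that pseudo-algebraically closed fields, complete valued fields, and non-trivially henselian valued fields are all large.
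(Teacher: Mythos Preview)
Your proposal is correct and follows essentially the same approach as the paper: in each of the three cases one verifies that $K$ itself is large and lies in $\mc{K}_d$ for some $d \in \nat^+$, then invokes \Cref{EtoAglobalex} with $K' = K$. The paper's proof is more terse but cites exactly the same ingredients (\Cref{E:linkage}, \Cref{P:linkage-stability}, and largeness of PAC and henselian fields); your additional care in checking that the resulting index is strictly positive is warranted and handled correctly.
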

\begin{proof}
It follows from \Cref{E:linkage} and \Cref{P:linkage-stability} that $K \in \mc{K}_1$ in the first case, that $K \in \mc{K}_{d+1}$ in the second case, and that $K \in \mc{K}_{d+e}$ for $e = \log_2((vK : 2vK))$ in the third case.
Furthermore, in each case, $K$ is large \cite[2]{Pop_LittleSurvey}.
The rest of the statement thus falls under the second case of \Cref{EtoAglobalex}.
\end{proof}

Let us now discuss a consequence of \Cref{EtoAglobalex} for definability of very general relations in function fields over number fields.
\begin{gev}\label{ce-sets-ea-definable}
  Let $K$ be a number field and $F/K$ be a function field in one variable.
  Let $k \geq 1$.
  Every computably enumerable subset $A \subseteq F^k$ is $\exists\forall$-definable in $F$.
\end{gev}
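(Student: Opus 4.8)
The plan is to combine \Cref{EtoAglobalex} with a Davis--Putnam--Robinson--Matiyasevich-type theorem for rings of $S$-integers in $F$, together with the classical trick of coding an element of $F$ as a ratio of two $S$-integers. The input from the former is that, having fixed a computable presentation of $F$ as a finitely generated field, $\mc{O}_S$ is a $\forall$-definable subring of $F$ for every finite $S \subseteq \mc{V}(F/K)$. The input from the latter, \cite[Theorem 7.1]{MillerShlapentokh_v2}, is a finite set $S \subseteq \mc{V}(F/K)$, which we may assume to be non-empty so that $\Frac(\mc{O}_S) = F$, such that for every $n$ every computably enumerable subset of $\mc{O}_S^n$ is diophantine over $\mc{O}_S$, i.e.\ definable in the ring $\mc{O}_S$ by an existential $\Lar(\mc{O}_S)$-formula.

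First I would transfer $A$ to a subset of a power of $\mc{O}_S$. Since the field operations of $F$ and membership in $\mc{O}_S$ are computable in the chosen presentation, the set
\[ \widetilde{A} = \{ (a_1, b_1, \dotsc, a_k, b_k) \in \mc{O}_S^{2k} \mid b_1, \dotsc, b_k \neq 0, \ (a_1 b_1^{-1}, \dotsc, a_k b_k^{-1}) \in A \} \]
is computably enumerable (dovetail an enumeration of $\mc{O}_S$, the decision procedure for $b_i \neq 0$, the computation of the ratios $a_i b_i^{-1}$, and an enumeration of $A$). Because $F = \Frac(\mc{O}_S)$, for $x = (x_1, \dotsc, x_k) \in F^k$ we have $x \in A$ if and only if there exist $a_1, b_1, \dotsc, a_k, b_k \in \mc{O}_S$ with $(a_1, b_1, \dotsc, a_k, b_k) \in \widetilde{A}$ and $x_i b_i = a_i$ for all $i$. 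Applying the DPRM-type input to $\widetilde{A}$ produces a polynomial $f$ over $\mc{O}_S$ in variables $a_1, b_1, \dotsc, a_k, b_k, d_1, \dotsc, d_m$ with $(\bar a, \bar b) \in \widetilde{A}$ if and only if $\exists \bar d \in \mc{O}_S^m : f(\bar a, \bar b, \bar d) = 0$. Hence $x \in A$ is equivalent to
\[ \exists a_1, b_1, \dotsc, a_k, b_k, d_1, \dotsc, d_m \in \mc{O}_S : \ \Big( \bigwedge_{i=1}^k x_i b_i = a_i \Big) \wedge f(\bar a, \bar b, \bar d) = 0. \]

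It remains to replace the bounded quantifiers ``$\exists\,\cdot \in \mc{O}_S$'' by unbounded ones. Since $\mc{O}_S$ is $\forall$-definable in $F$, write $u \in \mc{O}_S \iff \forall \bar w\, \psi(u, \bar w)$ with $\psi$ quantifier-free; substituting this for each of the $2k+m$ membership conditions turns the last display into a formula of the shape
\[ \exists \bar z \, \Big( \bigwedge_j \forall \bar w_j\, \psi(z_j, \bar w_j) \wedge \theta(x, \bar z) \Big), \]
with $\theta$ quantifier-free and not involving the $\bar w_j$. Pulling the universal quantifiers to the front (merging the $\bar w_j$ into a single tuple) exhibits $A$ as definable in $F$ by an existential-universal $\Lar(F)$-formula, so $A$ is $\exists\forall$-definable in $F$. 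The argument is essentially an assembly of the two quoted theorems; the only steps needing care are checking that $\widetilde{A}$ is genuinely computably enumerable --- hence the insistence on a computable presentation of $F$ in which membership in $\mc{O}_S$ is decidable --- and the routine quantifier bookkeeping in this last paragraph, which is where the $\forall$-definability of $\mc{O}_S$ from \Cref{EtoAglobalex} enters.
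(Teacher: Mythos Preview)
Your proposal is correct and follows essentially the same route as the paper: transfer $A$ to a computably enumerable subset of a power of $\mc{O}_S$, apply \cite[Theorem 7.1]{MillerShlapentokh_v2} to get existential definability over $\mc{O}_S$, and then use the $\forall$-definability of $\mc{O}_S$ from \Cref{EtoAglobalex} to turn the bounded existential quantifiers into an $\exists\forall$-formula over $F$. The only cosmetic difference is that the paper uses a single common denominator $b$ (so works in $\mc{O}_S^{k+1}$) rather than separate denominators $b_1,\dotsc,b_k$, and it takes $S$ to be an arbitrary non-empty finite set rather than one produced by \cite{MillerShlapentokh_v2}; neither affects the argument.
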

Let us clarify terminology.
The field $F$ is finitely generated over $\mathbb{Q}$, and therefore admits a bijection $\mathbb{N} \to F$
such that the pullback of the graphs of addition and multiplication gives relations on $\mathbb{N}$
which are computable in the usual sense;
in other words, $F$ can be seen as a computable field.
A subset $A \subseteq F^k$ is \emph{computably enumerable} if its pullback with respect to the chosen bijection $\mathbb{N} \to F$
is a computably enumerable subset of $\mathbb{N}^k$.
This notion does not in fact depend on the choice of bijection $\mathbb{N} \to F$;
this is a consequence of the fact that finitely generated fields
are \emph{computably stable}.
Similarly, we have a well-behaved notion of computably enumerable subset of $\mc{O}_S^k$
for every finite $S \subseteq \Val(F/K)$,
since the finitely generated integral $\qq$-algebra $\mc{O}_S$ is likewise computably stable.
See the brief \cite[Subsection 1.1]{Demeyer_DiophFuncFldsGlobal} for all this,
using the adjective \emph{recursive} instead of \emph{computable};
see also the extensive survey \cite[Sections 2 \& 3]{Handbook-CompRingsFields}
for more on computable and computably stable rings.
The notion of listable sets of \cite{Pasten} is the same as what we call computably enumerable here,
and the relevant fact about $F$ and $\mc{O}_S$
is that they are uniquely listable in the sense introduced there,
which is related (but not identical) to computable stability.
\begin{proof}[Proof of \Cref{ce-sets-ea-definable}.]
  We follow the proof of \cite[Proposition 6.1]{Daans_Defining10}.
  Let $S \subseteq \Val(F/K)$ be an arbitrary non-empty finite set.
  Then the $K$-subalgebra $\mc{O}_S$ of $F$ has fraction field $F$.
  We consider the set
  \[ B = \{ (a_1, \dotsc, a_k, b) \in \mc{O}_S^{k+1} \mid b \neq 0, (\frac{a_1}{b}, \dotsc, \frac{a_k}{b}) \in A \}. \]
  Given the assumption that $A$ is computably enumerable, $B$ is also computably enumerable.

  Every computably enumerable subset of $\mc{O}_S^{k+1}$ is existentially definable in $\mc{O}_S$:
  in \cite[Theorem 7.1]{MillerShlapentokh_v2} this is stated for $k=0$, but the proof makes it clear that the stronger statement holds, as in the special case given in \cite{Demeyer_DiophFuncFldsGlobal} and serving as the motivation of \cite[Theorem 7.1]{MillerShlapentokh_v2}.
  In particular, $B$ is existentially definable in $\mc{O}_S$,
  so there exists $m \geq 0$ and a quantifier-free $\Lar(\mc{O}_S)$-formula $\varphi(X_1, \dotsc, X_{k+1}, Y_1, \dotsc, Y_m)$
  such that \[ B = \big\{ (a_1, \dotsc, a_k, b) \in \mc{O}_S^{k+1} \mid \exists c_1, \dotsc, c_m \in \mc{O}_S \big(\varphi(a_1, \dotsc, a_k, b, c_1, \dotsc, c_m)\big) \big\} .\]
  It follows that an arbitrary tuple $(d_1, \dotsc, d_k) \in F^k$ lies in $A$ if and only if 
  \begin{multline*}
    \exists b, c_1, \dotsc, c_m \in F \Big(b \neq 0 \wedge b \in \mc{O}_S \wedge \bigwedge_{i=1}^k d_i b \in \mc{O}_S \wedge \bigwedge_{i=1}^m c_i \in \mc{O}_S  \\
    \wedge \varphi(d_1 b, \dotsc, d_k b, b, c_1, \dotsc, c_m) \Big) .
  \end{multline*}
  Since $\mc{O}_S$ is $\forall$-definable in $F$ by \Cref{EtoAglobalex},
  we see that $A$ is $\exists\forall$-definable in $F$.
\end{proof}

Results analogous to \Cref{ce-sets-ea-definable} also hold in some other settings,
for instance for rational function fields $F = K(T)$ over some large infinite algebraic extensions $K/\qq$
like $\qq_p \cap \overline{\qq}$ for an odd prime $p$,
by applying the results of \cite{DegrooteDemeyer-14} to the ring $K[T]$ instead of \cite[Theorem 7.1]{MillerShlapentokh_v2}.
We do not pursue this further here.

When $F$ is a function field in one variable over a field $K$, one can verify that the rings of the form $\mc{O}_S = \bigcap_{v \in \mc{V}(F/K) \setminus S} \mathcal{O}_v$ for some finite set $S$ are precisely the integrally closed, finitely generated $K$-subalgebras of $F$.
We conclude this section with a short discussion on universal definability of finitely generated $K$-subalgebras of $F$ which are not necessarily integrally closed.

\begin{prop}\label{propfingen}
  Let $K$ be a field and let $F/K$ be a function field in one variable such that
\begin{itemize}
\item $K$ is $\exists$-definable in $F$,
\item for any $v \in \mc{V}(F/K)$, $\mathcal{O}_v$ is $\exists$-definable in $F$,
\item for any finite subset $S \subseteq \mc{V}(F/K)$, $\mathcal{O}_S$ is $\forall$-definable in $F$.
\end{itemize}  
Then any finitely generated $K$-subalgebra of $F$ having $F$ as its fraction field is $\forall$-definable in $F$.
\end{prop}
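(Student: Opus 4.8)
The plan is to prove that $F \setminus R$ is existentially definable in $F$, which is equivalent to $R$ being $\forall$-definable. Write $\tilde R$ for the integral closure of $R$ in $F$. Since $R$ is a finitely generated $K$-algebra, $\tilde R$ is a finite $R$-module (finiteness of normalisation for finitely generated algebras over a field), hence a finitely generated integrally closed $K$-subalgebra of $F$ with fraction field $F$, so $\tilde R = \mathcal{O}_S$ for some finite $S \subseteq \mc{V}(F/K)$ by the characterisation recalled before this proposition. If $R = \tilde R$ we are done by the third hypothesis, so assume $R \subsetneq \tilde R$, and let $\mf{c} = \{ x \in \tilde R \mid x\tilde R \subseteq R \}$ be the conductor, a nonzero proper ideal of both $R$ and $\tilde R$. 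The quotient $\tilde R/\mf{c}$ has finite length over $\tilde R$ and all of its composition factors are finite field extensions of $K$ by Zariski's Lemma, so $\dim_K(\tilde R/\mf{c}) < \infty$, and $R/\mf{c}$ is a $K$-subspace. I would then fix $r_1, \dots, r_m \in R$ whose classes form a $K$-basis of $R/\mf{c}$, together with $s_1, \dots, s_l \in \tilde R = \mathcal{O}_S$ whose classes extend this to a $K$-basis of $\tilde R/\mf{c}$; then for $x \in \tilde R$, writing $\bar x = \sum_i c_i \bar r_i + \sum_j e_j \bar s_j$ uniquely with $c_i, e_j \in K$, we have $x \in R$ if and only if $e_1 = \dots = e_l = 0$.

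Next I would make membership in $\mf{c}$ explicit. As $\tilde R = \mathcal{O}_S$ is a Dedekind domain whose maximal ideals are the $\mf{m}_v \cap \mathcal{O}_S$ for $v \in \mc{V}(F/K) \setminus S$, the ideal $\mf{c}$ factors as a product over a finite set $T \subseteq \mc{V}(F/K) \setminus S$, and the Chinese Remainder Theorem gives $\mf{c} = \mathcal{O}_S \cap \bigcap_{v \in T} \mf{m}_v^{n_v}$ for suitable integers $n_v \geq 1$. Combining this with the basis description, I claim that for $x \in \mathcal{O}_S$ one has $x \notin R$ precisely when $\psi(x)$ holds, where
\[ \psi(x):\quad \exists\, c_1, \dots, c_m, e_1, \dots, e_l \in K \;\Big( (e_1, \dots, e_l) \neq 0 \ \wedge\ \bigwedge_{v \in T} \big( x - \textstyle\sum_i c_i r_i - \sum_j e_j s_j \in \mf{m}_v^{n_v} \big) \Big). \]
For ``$\Leftarrow$'', note that since $x, r_i, s_j \in \mathcal{O}_S$ the displayed combination automatically lies in $\mathcal{O}_S \cap \bigcap_{v \in T} \mf{m}_v^{n_v} = \mf{c}$, so $\bar x = \sum_i c_i \bar r_i + \sum_j e_j \bar s_j$ is the (unique) basis representation of $\bar x$, whose $\bar s$-component is nonzero; hence $\bar x \notin R/\mf{c}$ and $x \notin R$. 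For ``$\Rightarrow$'', take the unique basis representation of $\bar x$: it has nonzero $\bar s$-component exactly when $x \notin R$, and the defining congruence modulo $\mf{c}$ gives the required membership in each $\mf{m}_v^{n_v}$.

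Finally I would assemble $F \setminus R = (F \setminus \mathcal{O}_S) \cup \{ x \in F \mid \psi(x) \}$, checking the three cases: if $x \in R$ then $x \in \mathcal{O}_S$ and $\neg\psi(x)$; if $x \in \mathcal{O}_S \setminus R$ then $\psi(x)$ holds; and if $x \notin \mathcal{O}_S$ then $x$ lies in the first set. Now $F \setminus \mathcal{O}_S$ is existentially definable by the third hypothesis, and $\psi$ is existential: $K$ is existentially definable by the first hypothesis, each $\mf{m}_v^{n_v}$ is existentially definable by the second (via a uniformiser, as in the proof of \Cref{Phidefinable}), and finite conjunctions, disjunctions, and the inequation ``$\neq 0$'' of existential formulas remain existential. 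Hence $F \setminus R$ is existentially definable, so $R$ is $\forall$-definable.

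The point requiring care — and the reason the auxiliary elements $s_j$ are introduced alongside the $r_i$ — is that the hypotheses provide $\exists$-definability of the rings $\mathcal{O}_v$ and of $K$ but only $\forall$-definability of $\mathcal{O}_S$. A naive description of $R$ inside $\mathcal{O}_S$ by congruences would express ``$x \notin R$'' as a statement universally quantified over parameters in $K$ and would not yield an existential formula; rephrasing ``$x \notin R$'' positively, as the existence of a representation of $\bar x$ modulo $\mf{c}$ with non-vanishing transversal ($\bar s$-)component, is exactly what keeps the description existential. The remaining verification — that reduction modulo the finitely many $\mf{m}_v^{n_v}$ genuinely detects membership in the conductor — is routine once the Dedekind factorisation of $\mf{c}$ is in hand.
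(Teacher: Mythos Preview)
Your proof is correct and follows essentially the same approach as the paper's: reduce to the integral closure $\tilde R=\mathcal{O}_S$, pass to a finite-dimensional $K$-quotient by an ideal of $\tilde R$ contained in $R$, and detect membership in that ideal via finitely many $\exists$-definable valuation data together with the $\exists$-definability of $K$. The differences are cosmetic: the paper uses a principal ideal $r\tilde R\subseteq R$ where you use the conductor $\mf{c}$, and the paper phrases the description of $R$ directly as a universal implication (``for all $K$-coefficients, if they represent $\bar x$ then they lie in the subspace $R/r\tilde R$''), whereas you dualise and write $F\setminus R$ as $(F\setminus\mathcal{O}_S)\cup\{x:\psi(x)\}$ with $\psi$ existential, using a complementary basis $s_1,\dots,s_l$ to witness non-membership positively. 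Both are valid renderings of the same linear-algebraic fact.
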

\begin{proof}
Let $R$ be a finitely generated $K$-subalgebra of $F$ with fraction field $F$, and let $R'$ be the integral closure of $R$ in $F$.
Then $R$ is noetherian and has Krull dimension $1$ \autocite[Theorem 8.A]{Eis04},
and $R'$ is finitely generated as an $R$-module \autocite[Corollary 13.13]{Eis04},
in particular finitely generated as a $K$-algebra.
Now \cite[Proposition 6.4]{Andromeda-1} implies that $R' = \mc{O}_S$ for some finite set $S \subseteq \mc{V}(F/K)$,
so $R'$ is universally definable in $F$ by assumption.

Letting $r \in R\setminus \lbrace 0 \rbrace$ be a common denominator of a set of generators of $R'$ as an $R$-module, we have that $rR' \subseteq R$ and $R'/rR'$ has Krull dimension $0$. Since $R'/rR'$ is finitely generated as a $K$-algebra, it is therefore a finite-dimensional $K$-vector space by Noether Normalisation \autocite[Theorem 8.A1]{Eis04}.

Pick elements $b_1, \ldots, b_m \in R'$ such that their residues form a $K$-basis of $R'/rR'$.
A given $x \in F$ lies in $R$ if and only if $x \in R'$ and the element $x + rR' \in R'/rR'$,
which can be uniquely expressed modulo $rR'$ as a $K$-linear combination of the $b_j$,
lies in $R/rR'$.
Therefore $x \in F$ lies in $R$ if and only if $x \in R'$ and for all $y_1, \ldots, y_m \in F$ one has
\begin{displaymath}
(y_1, \ldots, y_m \in K \text{ and } x - \sum_{j=1}^m y_jb_j \in rR') \rightarrow \sum_{j=1}^m y_jb_j +rR' \in R/rR'.
\end{displaymath}
To argue that this condition can be described with a universal formula,
it suffices to establish that the antecedent of the implication can be expressed by an existential formula,
and the consequent by a quantifier-free formula.
The set $R/rR'$ is a $K$-subspace of the finite-dimensional $K$-vector space $R'/rR'$, so the set of $(y_1, \ldots, y_m) \in K^m$ satisfying $\sum_{j=1}^m y_jb_j + rR' \in R/rR'$ is a subspace of $K^m$ and therefore quantifier-freely characterised by the vanishing of finitely many linear forms on $K^m$.

The field $K$ is existentially definable in $F$ by assumption.
To express that for $x \in R'$ and $y_1, \ldots, y_m \in K$ one has $x - \sum_{j=1}^m y_jb_j \in rR'$ , it suffices to state that for each of the finitely many maximal ideals $\mathfrak{p}$ of $R'$ containing $r$ one has $x - \sum_{j=1}^m y_jb_j \in rR'_{\mathfrak{p}}$, where $R'_{\mathfrak{p}}$ is the localisation of $R'$ at $\mathfrak{p}$.
But $R'_{\mathfrak{p}}$ is a valuation ring of $F$ containing $K$, hence existentially definable in $F$ by assumption.
\end{proof}
\begin{stel}\label{T:UniversalFunctionFieldMain}
Let $K$ be a field, and assume that some finite extension $K'/K$ is large and $K' \in \mc{K}_d$ for some $d \in \nat^+$.
For instance, it suffices that one of the following holds:
\begin{itemize}
\item $K'$ is a non-archimedean local field,
\item $K'$ is pseudo-algebraically closed, has no inseparable quadratic extension, but has a separable extension of even degree,
\item $K'$ carries a complete $\zz$-valuation with residue field in $\mc{K}_e$ for some $e \in \nat$.
\item $K'$ carries a henselian non-dyadic valuation $v$ with residue field in $\mc{K}_e$ for some $e \in \nat$ and such that $1 < (vK' : 2vK') < \infty$.
\end{itemize}
Let $F/K$ be a regular function field in one variable. Any finitely generated $K$-subalgebra of $F$ having $F$ as its fraction field is $\forall$-definable in $F$.
\end{stel}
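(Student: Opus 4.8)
The plan is to verify the three hypotheses of \Cref{propfingen} for the regular function field $F/K$: that $K$ is $\exists$-definable in $F$, that $\mc{O}_v$ is $\exists$-definable in $F$ for every $v \in \mc{V}(F/K)$, and that $\mc{O}_S$ is $\forall$-definable in $F$ for every finite $S \subseteq \mc{V}(F/K)$. The conclusion about finitely generated $K$-subalgebras with fraction field $F$ then follows directly from \Cref{propfingen}. First I would dispose of the ``it suffices that'' list: complete and non-trivially henselian valued fields, as well as pseudo-algebraically closed fields, are large \cite{Pop_LittleSurvey}, and in each of the four listed situations $K' \in \mc{K}_d$ for some $d \in \nat^+$ by \Cref{E:linkage} together with \Cref{P:linkage-stability}, exactly as in the proof of \Cref{C:EtoAglobalex}. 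So we may assume the main hypothesis, and then the third of the above conditions is precisely \Cref{EtoAglobalex} in the case where $K'/K$ is a finite extension that is large and lies in $\mc{K}_d$.

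For the remaining two conditions I would pass to a convenient constant field extension, as in the proof of \Cref{EtoAglobalex}. Applying \Cref{L:hyp1andhyp2} to $K'$ and the function field $FK'/K'$ produces a finite extension $L/K'$, inside an algebraic closure of $F$, such that $FL/L$ is a \emph{regular} function field in one variable satisfying \eqref{hyp1} and \eqref{hyp2}; moreover $L$ is large (finite extensions of large fields are large, see \cite{Pop_LittleSurvey}) and $L \in \mc{K}_d$ by \Cref{P:linkage-stability}\eqref{it:K_d-fin}, so $FL/L$ also satisfies \eqref{hyp3} by the large-field definability results of \cite{Andromeda-1}, precisely as in the proof of \Cref{EtoAglobalex}. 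Now \Cref{L:O_vdefinablefunctionfields}, applied to $FL/L$ (which satisfies \eqref{hyp1} and \eqref{hyp3} with $1 \le d = \dim_2 L < \infty$), shows that $\mc{O}_w$ is $\exists$-definable in $FL$ for every $w \in \mc{V}(FL/L)$; and since $FL/L$ is a regular function field over the large field $L$, its constant field $L$ is itself $\exists$-definable in $FL$ --- here I would invoke the pertinent result of \cite{Andromeda-1} on definability in function fields over large base fields.

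It remains to descend these statements from $FL$ to $F$. As $FL/F$ is a finite field extension, $FL$ is interpretable in $F$ upon fixing an $F$-basis $1 = \omega_1, \omega_2, \ldots, \omega_n$; under this interpretation every subset of $FL$ that is $\exists$-definable in $FL$ corresponds to an $\exists$-definable subset of $F^n$, and substituting $0$ for the coordinates along $\omega_2, \ldots, \omega_n$ shows that $A \cap F$ is $\exists$-definable in $F$ whenever $A \subseteq FL$ is $\exists$-definable in $FL$ (this is the standard interpretation argument, cf.\ \cite[Lemma 9.11]{Andromeda-1}). Given $v \in \mc{V}(F/K)$, choose $w \in \mc{V}(FL/L)$ lying over $v$ --- any prolongation of $v$ to $FL$ is trivial on $L$ and has value group isomorphic to $\zz$ --- so that $\mc{O}_v = \mc{O}_w \cap F$ is $\exists$-definable in $F$. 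Finally $F \cap L = K$, because $F/K$ regular forces $K$ to be algebraically closed in $F$, hence $K$ is $\exists$-definable in $F$ as $F \cap L$. This verifies all three hypotheses of \Cref{propfingen}, and the theorem follows.

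I expect the only genuine obstacle to be the $\exists$-definability of the constant field $L$ in $FL$. Every other ingredient is either a routine interpretation argument or already at hand from \Cref{EtoAglobalex} and \Cref{L:O_vdefinablefunctionfields}; by contrast the constant field is only obviously $\forall$-definable, being equal to $\bigcap_{v \in \mc{V}(FL/L)} \mc{O}_v$, and producing an existential definition of it over a large base field is exactly where additional input from \cite{Andromeda-1} is needed --- in the Pfister-form language of the present paper, a suitable control of the ramification of Pfister forms of the shape $\llangle a_1, \ldots, a_{d-1}, c, b]]$ along a non-constant function $c$.
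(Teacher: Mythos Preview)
Your argument is correct and follows the same overall strategy as the paper---verify the three hypotheses of \Cref{propfingen} over a suitable constant extension and then descend by interpretation---but the paper takes a slightly more direct route. Rather than invoking \Cref{L:hyp1andhyp2} to pass to a further extension $L/K'$ in order to secure \eqref{hyp1}--\eqref{hyp3} and then apply \Cref{L:O_vdefinablefunctionfields}, the paper works directly with $FK'/K'$: the $\exists$-definability of each $\mc{O}_v$ in $FK'$ is obtained straight from \cite[Theorem 9.12]{Andromeda-1}, which applies over any large base field and does not require \eqref{hyp1}. This saves the detour through $L$.

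More importantly, the ``genuine obstacle'' you flag---the $\exists$-definability of the constant field---is not where the Pfister-form machinery is needed at all. The paper simply cites Koenigsmann's classical result \cite[Theorem 2]{Koe02}: for any large field $K'$ and any regular function field $E/K'$, the constant field $K'$ is $\exists$-definable in $E$. This is a general fact about large fields (proved via the defining property that a smooth curve with a rational point has infinitely many) and predates the techniques of \cite{Andromeda-1}; no control of Pfister ramification along a non-constant function is required. With this in hand, your descent step $K = L \cap F$ (or $K = K' \cap F$ in the paper's version), using regularity of $F/K$, goes through exactly as you describe.
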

\begin{proof}
This follows from \Cref{propfingen} once we verify that the assumptions are satisfied.
Consider first the regular function field in one variable $FK'/K'$.
Since $K'$ is large, it is existentially definable in $FK'$ \cite[Theorem 2]{Koe02}.
The universal definability of rings of $S$-integers in $FK'/K'$ follows from \Cref{EtoAglobalex}.
The existential definability of $\mathcal{O}_v \subseteq FK'$ for every $v \in \Val(FK'/K')$ follows from \cite[Theorem 9.12]{Andromeda-1} (with $K_0 = K'$, $F_0 = FK'$).
This shows that $FK'/K'$ satisfies the assumptions of \Cref{propfingen}.

To see that also $F/K$ then satisfies the hypotheses of \Cref{propfingen}, we argue as in the second paragraph of the proof of \Cref{EtoAglobalex}. 
Observe that $K = K' \cap F$, that for every $v \in \Val(F/K)$ there exists $v' \in \Val(FK'/K')$ such that $\mc{O}_v = \mc{O}_{v'} \cap F$, and that for every finite subset $S \subseteq \Val(F/K)$, there exists a finite subset $S' \subseteq \Val(FK'/K')$ such that $\mc{O}_S = \mc{O}_{S'} \cap F$.
The existential, respectively universal, definability of the sets $K$, $\mc{O}_v$ and $\mc{O}_S$ in $F$ now follows from the corresponding definability of $K'$, $\mc{O}_{v'}$ and $\mc{O}_{S'}$ in $FK'$ by a standard interpretation argument, see \cite[Lemma~6.3]{MillerShlapentokh_v2}.
\end{proof}

\begin{vbn}\label{E:large-basefields-definability}
For each of the following fields $K$, we have by \Cref{T:UniversalFunctionFieldMain} that, for every regular function field in one variable $F/K$, every finitely generated $K$-subalgebra of $F$ having $F$ as its fraction field is $\forall$-definable in $F$.
\begin{itemize}
\item $K = \rr(\!(T)\!)$ or $K = \cc(\!(T)\!)$, the field of formal Laurent series with real, respectively complex, coefficients.
  Here it suffices to observe that $\cc(\!(T)\!)$ is a complete $\zz$-valued field with residue field $\cc \in \mc{K}_0$;
  see \Cref{E:linkage}.
\item $K$ is a higher local field, i.e.~there exists a sequence $K_1, K_2, \ldots, K_d = K$ where $d > 1$, $K_1$ is a finite field, and $K_{i+1}$ is a complete $\zz$-valued field with residue field $K_i$.
By \Cref{E:linkage} we have $K_1 \in \mc{K}_1$ and then by \Cref{P:linkage-stability} we see that $K \in \mc{K}_d$.
\item $K$ is a pseudo-finite field, i.e.~it is elementarily equivalent to an ultraproduct of finite fields.
See \cite[Section 20.10]{Fri08} for a discussion of pseudo-finite fields.
A pseudo-finite field is in particular always pseudo-algebraically closed and perfect, and has a separable quadratic extension, so lies in $\mc{K}_1$ by \Cref{E:linkage}\eqref{it:PAC}.
\end{itemize}
\end{vbn}

\begin{opm}\label{R:elim-nonconst-consts}
  Applying \Cref{EtoAglobal} with $S = \emptyset$ yields that $K$ is defined in $F$ by a universal $\Lar(F)$-formula whenever the hypotheses are satisfied.
  On the other hand, note that if $C$ is a curve over a field $K$ for which the set of $K$-rational points $C(K)$ is infinite,
  then $K$ cannot be defined by a universal $\Lar(K)$-formula in the function field $K(C)$ since $K$ is existentially closed in $K(C)$ \autocite[Fact 2.3]{Pop_LittleSurvey}.

  If $C(K)$ is finite, different behaviour can be observed.
  For instance, if $K$ is of characteristic 0, $C/K$ is a curve of genus greater than $1$ with $C(K)$ finite and we set $F = K(C)$,
  then also $C(F) \setminus C(K)$ is finite (cf.\ the proof of \autocite[Theorem 1(3)]{Koe02}).
  By mapping $C(F) \setminus C(K)$ to $F$ with a suitable rational function,
  we obtain a non-empty existentially $\Lar(K)$-definable finite subset $A \subseteq F$ disjoint from $K$.
  This can be used to find an existentially $\Lar(K)$-definable element $T \in F \setminus K$, by applying a symmetric function to the elements of $A$.

  Let $s$ be a generator of $F$ over $K(T)$.
  Then an existential $\Lar(F)$-formula defining $F \setminus K$ in $F$ can be turned into
  an existential $\Lar(K)$-formula which also defines $F \setminus K$ in $F$,
  by writing all constants as rational functions of $s$ and $T$,
  and eliminating $s$ by existentially quantifying over roots of the minimal polynomial of $s$ over $K(T)$.
\end{opm}

\printbibliography
\end{document}